\newcolumntype{V}{m{.40\linewidth}}
\newcolumntype{W}{m{.60\linewidth}}
\newcommand{\upperlinespace}{1.5ex}
\newcommand{\nodedistance}{0.6}
\newcommand{\circleinnersep}{1.6}
\renewcommand{\arraystretch}{1.2}
\newcommand{\bulletspacest}{.35\linewidth}
\newcommand{\bulletspacend}{.4\linewidth}
\newcommand{\bulletspacerd}{.2\linewidth}
\newtheorem{theorem}{Theorem}[section]
\newtheorem{lemma}[theorem]{Lemma}
\newtheorem{proposition}[theorem]{Proposition}
\newtheorem{corollary}[theorem]{Corollary}
\newtheorem{maintheorem}{Main Theorem}
\newtheorem*{theorem*}{Theorem}
\theoremstyle{definition}
\newtheorem{remark}[theorem]{Remark}
\newcommand{\C}{\ensuremath{\mathbb{C}}}
\newcommand{\R}{\ensuremath{\mathbb{R}}}
\renewcommand{\H}{\ensuremath{\mathbb{H}}}
\renewcommand{\mod}{\ensuremath{\mathrm{mod\,}}}
\newcommand{\Gr}{\ensuremath{\mathrm{Gr}}}
\newcommand{\g}[1]{\ensuremath{\mathfrak{#1}}}
\newcommand{\cal}[1]{\ensuremath{\mathcal{#1}}}
\newcommand{\wt}[1]{\ensuremath{\widetilde{#1}}}
\DeclareMathOperator{\tr}{tr}
\DeclareMathOperator{\codim}{codim}
\DeclareMathOperator{\Ad}{Ad}
\DeclareMathOperator{\ad}{ad}
\DeclareMathOperator{\Id}{Id}
\DeclareMathOperator{\rank}{rank}
\renewcommand{\O}{\ensuremath{\mathrm{O}}}
\newcommand{\SO}{\ensuremath{\mathrm{SO}}}
\newcommand{\SU}{\ensuremath{\mathrm{SU}}}
\newcommand{\Sp}{\ensuremath{\mathrm{Sp}}}
\newcommand{\Spin}{\ensuremath{\mathrm{Spin}}}
\newcommand{\Pin}{\ensuremath{\mathrm{Pin}}}
\newcommand{\Cl}{\ensuremath{\mathrm{Cl}}}
\newcommand{\U}{\ensuremath{\mathrm{U}}}
\newcommand{\Aut}{\ensuremath{\mathrm{Aut}}}
\newcommand{\Out}{\ensuremath{\mathrm{Out}}}
\newcommand{\End}{\ensuremath{\mathrm{End}}}
\newcommand{\h}{\ensuremath{h}}
\newcommand{\w}{\ensuremath{\omega}}
\newcommand{\e}{\ensuremath{e}}
\begin{document}
\title{Isoparametric foliations on complex projective spaces}

\author[M. Dom\'{\i}nguez-V\'{a}zquez]{Miguel Dom\'{\i}nguez-V\'{a}zquez}
\address{Instituto de Matem\'atica Pura e Aplicada (IMPA), Brazil.}
\email{mvazquez@impa.br}

\thanks{The author has been supported by the FPU programme of the Spanish Government, by a Marie-Curie European Reintegration Grant (PERG04-GA-2008-239162), and projects MTM2009-07756 and INCITE09207151PR (Spain).}

\subjclass[2010]{Primary 53C40, Secondary 53C12, 53C35}

%\date{}

\begin{abstract}
Irreducible isoparametric foliations of arbitrary codimension $q$ on complex projective spaces $\C P^n$ are classified, for $(q,n)\neq (1,15)$. 
Remarkably, there are noncongruent examples that pull back under the Hopf map to congruent foliations on the sphere. Moreover, there exist many inhomogeneous isoparametric foliations, even of higher codimension. In fact, every irreducible isoparametric foliation on $\C P^n$ is homogeneous if and only if $n+1$ is prime.

The main tool developed in this work is a method to study singular Riemannian foliations with closed leaves on complex projective spaces. This method is based on certain graph that generalizes extended Vogan diagrams of inner symmetric spaces.
\end{abstract}

\keywords{Isoparametric foliation, polar action, inhomogeneous isoparametric foliation, FKM-foliation, extended Vogan diagram, inner symmetric space, complex projective space}

\maketitle

% Text of article.

\section{Introduction}

Since its beginnings with the works of Somigliana, Levi-Civita, Segre, and Cartan, the theory of isoparametric foliations has been a fruitful area of research in Differential Geometry; see \cite{Th10} (and references therein) for a survey. Initially, only isoparametric hypersurfaces in real space forms were studied. Recall that a hypersurface is called isoparametric if its nearby equidistant hypersurfaces have constant mean curvature. Isoparametric hypersurfaces in real space forms have many remarkable properties. For example, they are precisely the hypersurfaces with constant principal curvatures, and every locally defined isoparametric hypersurface can be extended to a complete foliation of the ambient space. Here and henceforth, a foliation will be a singular Riemannian foliation as defined in \S\ref{sec:foliations} (cf.~\cite{Th10}).

Isoparametric hypersurfaces in Euclidean and real hyperbolic spaces were classified by Segre and Cartan, respectively. The corresponding foliations turn out to be homogeneous, that is, orbit foliations of isometric actions on the ambient space. However, this is not true in the case of spheres. Examples of inhomogeneous isoparametric foliations of codimension one were constructed by Ozeki and Takeuchi and then generalized by Ferus, Karcher, and M\"unzner using Clifford modules \cite{FKM81}; we call these examples the FKM-foliations. Hence, the problem in spheres is very involved and interesting, see~\cite{Ya93}. Over the last few years, there have been several major advances towards a final classification, which seems not to be very far. Among other investigations by several people, the results of Stolz \cite{St99}, Cecil, Chi, Jensen \cite{CCJ07}, Immervoll \cite{Im08}, Chi \cite{Ch11a}, \cite{Ch11b}, and Miyaoka \cite{Mi09}, imply that every isoparametric foliation of codimension one on a sphere is the orbit foliation of the isotropy representation of a semisimple symmetric space of rank two, or it is an FKM-foliation, or its isoparametric hypersurfaces satisfy $(g,m_1,m_2)=(4,7,8)$. Here $g$ is the number of principal curvatures of such a hypersurface, and $m_1=m_3$, $m_2=m_4$ their multiplicities.

The general theory of isoparametric foliations of arbitrary codimension in real space forms was developed by Terng \cite{Te85}. She defined a submanifold to be isoparametric if its normal bundle is flat and if it has constant principal curvatures in the direction of any parallel normal field. In real space forms, every isoparametric submanifold extends to a global isoparametric foliation. Indeed, these foliations can be seen as the level sets of the so-called isoparametric maps (see \cite{Te85}). Moreover, a foliation on a space form is isoparametric if and only if it is polar, i.e.\ if there is a complete totally geodesic submanifold $\Sigma_p$ through every regular point $p$ intersecting all leaves orthogonally ($\Sigma_p$ is then called a section). 

The classification problem of isoparametric submanifolds in Euclidean and real hyperbolic spaces has been reduced to the problem in spheres \cite{Te85}, \cite{Wu92}. Although the classification in spheres is still open for hypersurfaces, it has been completed for higher codimension, in which case all examples are homogeneous. More precisely, every irreducible isoparametric foliation of higher codimension on a sphere is the orbit foliation of an $s$-representation (that is, of the isotropy representation of a semisimple symmetric space). This remarkable result is due to Thorbergsson \cite{Th91} and will be important in our work.

The attempts to generalize isoparametric foliations to ambient spaces of nonconstant curvature have led to several different but related concepts, such as the notion of polar foliation introduced by Alexandrino \cite{Al04}, or the notion, introduced by Terng and Thorbergsson \cite{TT95}, of equifocal submanifolds of compact symmetric spaces (i.e.\ closed submanifolds with globally flat and abelian normal bundle, and whose focal directions and distances are invariant under parallel translation in the normal bundle). 
The study of these concepts has motivated many interesting ideas by several authors. For example, the combination of results by Christ~\cite{Ch02} and Lytchak~\cite{Ly12} imply the homogeneity of every irreducible polar foliation of codimension at least three on simply connected, irreducible, compact symmetric spaces of rank greater than one. Recently, Ge and Tang~\cite{GT13} have investigated isoparametric foliations of codimension one on more general manifolds, such as exotic spheres.

In our work, the definition of isoparametric submanifold that we will consider is the one due to Heintze, Liu, and Olmos \cite{HLO06}, which extends the notions given above of isoparametric hypersurface in any Riemannian manifold and of isoparametric submanifold of a real space form. Hence, we will say that a submanifold $M$ of a Riemannian manifold is an \emph{isoparametric submanifold} if the following properties are satisfied:
\begin{itemize}
\item[(a)] The normal bundle $\nu M$ is flat.
\item[(b)] Every parallel submanifold $M'$ of $M$ has constant mean curvature with respect to every parallel normal vector field of $M'$.
\item[(c)] $M$ admits sections, i.e.\ for each $p\in M$ there exists a totally geodesic submanifold $\Sigma_p$ that meets $M$ at $p$ orthogonally and whose dimension is the codimension of $M$.
\end{itemize}
The locally defined parallel submanifolds of an isoparametric submanifold are isoparametric as well, and thus define locally an \emph{isoparametric foliation}. 

The purpose of this work is to address the classification of isoparametric submanifolds of arbitrary codimension on complex projective spaces $\C P^n$.
It is important to mention that, as for real space forms, every isoparametric submanifold in a complex projective space extends to a globally defined isoparametric foliation (see Remark~\ref{rem:global_foliation}).

Before stating the main results of our work, it is convenient to settle some terminology. Let $\mathcal{F}$ be a foliation of the unit sphere $S^{2n+1}$ of $\R^{2n+2}$, and let $J$ be a complex structure on $\R^{2n+2}$ (in this work, by complex structure we mean an orthogonal, skew-symmetric transformation). We will say that $J$ preserves $\mathcal{F}$ if $\mathcal{F}$ is the pullback of a foliation on $\C P^n$ under the Hopf map $S^{2n+1}\to\C P^n$ determined by $J$ or, equivalently, if the leaves of $\mathcal{F}$ are foliated by the Hopf circles determined by $J$. We will say that a foliation $\mathcal{G}$ of a complex projective space $\C P^n$ is irreducible if there is no totally geodesic $\C P^k$, $k\in\{0,\ldots,n-1\}$, which is foliated by leaves of $\mathcal{G}$. Let $(G,K)$ be a semisimple symmetric pair. We will denote by $\mathcal{F}_{G/K}$ the orbit foliation of the isotropy representation of $G/K$ restricted to the unit sphere of the tangent space $T_{eK}(G/K)$. (Here $e$ denotes the identity element of $G$.) 

Our work was initially motivated by the following observation of Xiao \cite{Xi00t}. He noticed that, if $G/K$ is the real Grassmann manifold $\Gr_2(\R^{n+3})=\SO(n+3)/\mathrm{S}(\mathrm{O}(2)\times \mathrm{O}(n+1))$ with odd $n$, one can find two complex structures $J_1$ and $J_2$ on $T_{eK}(G/K)$ that preserve $\mathcal{F}_{G/K}$, and such that the projections of any fixed regular leaf of $\mathcal{F}_{G/K}$ via the corresponding Hopf maps $\pi_1,\pi_2\colon S^{2n+1}\subset T_{eK}(G/K)\to\C P^n$ yield two noncongruent isoparametric hypersurfaces of $\C P^n$, one of which is homogeneous while the other one is not.

Therefore, it seems natural to address the following problem: given an isoparametric foliation $\cal{F}$ on the sphere $S^{2n+1}$, find the set $\mathcal{J}_\cal{F}$ of complex structures on $\R^{2n+2}$ that preserve $\mathcal{F}$, determine the quotient set $\mathcal{J}_\cal{F}/\!\sim$, where $\sim$ stands for the equivalence relation ``give rise to congruent foliations on $\C P^n$", and finally decide which elements of $\mathcal{J}_\cal{F}/\!\sim$ provide homogeneous foliations on the corresponding $\C P^n$. Note that determining $\mathcal{J}_\cal{F}/\!\sim$ is then equivalent to classifying (up to congruence in $\C P^n$) those foliations on $\C P^n$ that pull back under the Hopf map to a foliation congruent to $\mathcal{F}$. We will write $N(\cal{F})$ for the cardinality of $\mathcal{J}_\cal{F}/\!\sim$. 

On the one hand, we will carry out this investigation for all isoparametric foliations $\cal{F}_{G/K}$ arisen from $s$-representations, thus obtaining the following result. Recall that $G/K$ is called inner if $\rank G=\rank K$.

\begin{maintheorem}
Let $G/K$ be an irreducible inner compact symmetric space of rank greater than one and set $n=\frac{1}{2}\dim G/K-1$. Then, up to congruence in $\C P^n$, there are exactly $N(\cal{F}_{G/K})\geq 1$ isoparametric foliations on $\C P^n$ whose pullback under the Hopf map gives a foliation congruent to $\mathcal{F}_{G/K}$, where:
\begin{itemize}
\item $N(\cal{F}_{G/K})=1+\left[\frac{\nu}{2}\right]+\left[\frac{p -\nu+1}{2}\right]$,\; if $G/K=\Gr_\nu(\C^{p+1})$ with $2\nu\neq p+1$,
\item $N(\cal{F}_{G/K})=1+\left[\frac{\nu}{2}\right]$,\; if $G/K=\Gr_\nu(\C^{p+1})$ with $2\nu= p+1$,
\item $N(\cal{F}_{G/K})=2$,\; if $G/K=\Gr_\nu(\H^{p})$ with $2\nu\neq p$, or if $G/K=\Gr_{2\nu}(\R^{2p})$ with $2\nu\neq p$, or if $G/K\in\{\mathbf{D\;III}, \mathbf{E \;II}, \mathbf{E \;III}, \mathbf{E \;VI}\}$,
\item $N(\cal{F}_{G/K})=1$,\; otherwise.
\end{itemize}

Moreover, if $G/K$ is Hermitian, exactly one of those $N(\cal{F}_{G/K})$ foliations is homogeneous. If $G/K$ is not Hermitian, all $N(\cal{F}_{G/K})$ foliations are inhomogeneous.

Conversely, if $\mathcal{G}$ is an irreducible isoparametric foliation of codimension greater than one on $\C P^n$, then there exist an irreducible inner compact symmetric space $G/K$ of dimension $2n+2$ and a complex structure $J$ on $T_{eK}(G/K)$ preserving $\mathcal{F}_{G/K}$ such that $\mathcal{G}$ is the projection of $\mathcal{F}_{G/K}$ by the Hopf map associated to $J$.
\end{maintheorem}

On the other hand, we investigate FKM-foliations satisfying $m_1\leq m_2$. To understand this condition and Main Theorem 2 below, let us briefly recall some known facts; see \S\ref{subsec:auto_FKM} for more details. Given a symmetric Clifford system $(P_0,\dots,P_m)$ on $\R^{2n+2}$, the corresponding FKM-foliation $\cal{F}_\cal{P}$ depends only on the $(m+1)$-dimensional vector space of symmetric matrices $\cal{P}=\mathrm{span}\{P_0,\dots,P_m\}$. The hypersurfaces of $\cal{F}_\cal{P}$ have $g=4$ principal curvatures with multiplicities $(m_1,m_2)=(m,n-m)$. Let $\Cl^*_{m+1}$ be the Clifford algebra of $\R^{m+1}$ with positive definite quadratic form. There is one equivalence class $\g{d}$ of irreducible $\Cl^*_{m+1}$-modules if $m\not\equiv 0\,(\mod 4)$, and two equivalence classes $\g{d}_+$, $\g{d}_-$ if $m\equiv 0\,(\mod 4)$.
Then $\cal{P}$ determines a representation of $\Cl^*_{m+1}$ on $\R^{2n+2}$, which is then equivalent to $\oplus_{i=1}^k\g{d}$ for some $k$ if $m\not\equiv 0\,(\mod 4)$, or to $(\oplus_{i=1}^{k_+}\g{d}_+)\oplus(\oplus_{i=1}^{k_-}\g{d}_-)$ for some $k_+,k_-$ if $m\equiv 0\,(\mod 4)$.

The condition $m_1\leq m_2$ always holds, except for $8$ FKM-examples. However, some of these exceptions are homogeneous or congruent to other FKM-foliations, so that only two examples remain unsettled, namely: both FKM-foliations with $(m_1,m_2)=(8,7)$. Intriguingly, such examples belong to the only open case in the classification of isoparametric hypersurfaces in spheres. Now, combining these results with Thorbergsson's theorem, our work classifies all irreducible isoparametric foliations of arbitrary codimension $q$ on $\C P^n$, except if $n=15$ and $q=1$. More explicitly, we have:

\begin{maintheorem}
Let $\cal{F}_\cal{P}$ be an FKM-foliation on $S^{2n+1}$ with $\dim \cal{P}=m+1$. Assume that $m_1\leq m_2$. Then, up to congruence in $\C P^n$, there are exactly $N(\cal{F}_\cal{P})\geq 1$ isoparametric foliations on $\C P^n$ that pull back under the Hopf map to a foliation congruent to $\cal{F}_\cal{P}$, where: 
\begin{itemize}
\item $N(\cal{F}_{\cal{P}})=2$,\; if $m\equiv 0\,(\mod 8)$ with $k_+$ and $k_-$ even, or if $m\equiv 1,7\,(\mod 8)$ with $k$ even, or if $m\equiv 3,4,5\,(\mod 8)$,
\item $N(\cal{F}_{\cal{P}})=2+\left[\frac{k}{2}\right]$,\; if $m\equiv 2,6\,(\mod 8)$,
\item $N(\cal{F}_{\cal{P}})=1$,\; otherwise.
\end{itemize}

Conversely, if $\mathcal{G}$ is an isoparametric foliation of codimension one on $\C P^n$, then there is a foliation $\cal{F}$ on $S^{2n+1}$ and a complex structure $J$ on $\R^{2n+2}$ preserving $\cal{F}$ such that $\cal{G}$ is the projection of $\cal{F}$ by the Hopf map associated to $J$, where
\begin{itemize}
\item $\cal{F}=\cal{F}_\cal{P}$ is an FKM-foliation satisfying $m_1\leq m_2$, or
\item $\cal{F}=\cal{F}_{G/K}$ for some inner compact symmetric space $G/K$ of rank $2$, or
\item $\cal{F}$ is an inhomogeneous isoparametric foliation of codimension one on $S^{31}$ whose hypersurfaces have $g=4$ principal curvatures with multiplicities $(7,8)$.
\end{itemize}
\end{maintheorem}

An important consequence of Main Theorem 1 is the existence of \emph{irreducible inhomogeneous isoparametric foliations of higher codimension} on complex projective spaces. This constrasts with the situation in higher rank symmetric spaces and also shows the impossibility of extending Thorbergsson's homogeneity theorem to complex projective spaces. Note that a foliation on $\C P^n$ is polar if and only if it is isoparametric (cf.~Proposition~\ref{prop:HopfMap} and \cite[Prop.~9.1]{Ly12}).  
Our work also generalizes known results on the existence of inhomogeneous examples of codimension one on $\C P^n$ (see \cite{Wa82}, \cite{Xi00t}, \cite{GTY11}). As shown by Ge, Tang, and Yan \cite{GTY11}, these examples exist if and only if $n\geq 3$ is odd (cf.~Theorem~\ref{th:codimension}(i)).

Several ingredients are fundamental in the proof of the Main Theorems. The classification results of isoparametric foliations on spheres and the nice behaviour of isoparametric submanifolds with respect to the Hopf map constitute the starting point of our arguments. However, the main tool we develop is certain general theory for the study of foliations with closed leaves on complex projective spaces. This is based, on the one hand, on the consideration of the automorphism group of foliations $\cal{F}\subset S^{2n+1}$, i.e.\ the group of orthogonal transformations of $\R^{2n+2}$ that map leaves of $\cal{F}$ to leaves of $\cal{F}$. This motivates the calculation of this group for homogeneous polar foliations on Euclidean spaces and for FKM-foliations satisfying $m_1\leq m_2$. On the other hand, our method requires the study of the symmetries of certain graph (the \emph{lowest weight diagram}) that we associate with $\cal{F}$. If $G/K$ is inner and $\cal{F}=\cal{F}_{G/K}$, such a diagram amounts to the \emph{extended Vogan diagram} of $G/K$. Finally, a subtle improvement of a result of Podest\`a and Thorbergsson \cite{PT99} gives us a criterion to decide when an isoparametric foliation on $\C P^n$ is homogeneous, from where we obtain some nice consequences, for example:
\begin{theorem*}
Every irreducible isoparametric foliation on $\C P^n$ is homogeneous if and only if $n+1$ is a prime number.
\end{theorem*}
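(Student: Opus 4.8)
The plan is to read the statement off from Main Theorem~1 and Main Theorem~2, the only additional input being an elementary dimension count; write $p=n+1$ and assume $p>2$ (the case $n=1$ is trivial, since every isoparametric foliation on $\C P^1$ with a proper leaf is reducible). I would begin with the ``only if'' direction: if $p$ is composite, write $p=ab$ with $2\le a\le b$ and put $G/K=\Gr_a(\C^{a+b})$, an irreducible inner compact Hermitian symmetric space of rank $a>1$ and real dimension $2ab=2p$. Main Theorem~1 then gives $N(\cal{F}_{G/K})\ge 1+\lfloor a/2\rfloor\ge 2$ pairwise noncongruent isoparametric foliations on $\C P^n$ that pull back under the Hopf map to $\cal{F}_{G/K}$; each of them is irreducible (a proper totally geodesic $\C P^k$ that were a union of leaves would pull back to a proper $K$-invariant great subsphere, impossible since the $s$-representation is irreducible), and exactly one is homogeneous because $G/K$ is Hermitian. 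Hence $\C P^n$ carries an irreducible inhomogeneous isoparametric foliation.

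For the converse, assume $p$ prime, hence odd, and let $\cal{G}$ be an irreducible isoparametric foliation on $\C P^n$. The heart of the matter is the following claim, obtained by running through Cartan's classification of irreducible inner compact symmetric spaces: if $\rank(G/K)>1$ and $\frac12\dim(G/K)=p$, then $G/K$ is the complex quadric $Q_p=\Gr_2(\R^{p+2})$ (and, when $p=3$, also $\Sp(2)/\U(2)$, which however is isometric to $Q_3$). Indeed, $\frac12\dim\Gr_\nu(\C^{p'+1})=\nu(p'+1-\nu)$ equals an odd prime only when $\{\nu,p'+1-\nu\}=\{1,p\}$, of rank one; $\frac12\dim\Gr_\nu(\H^{p'})=2\nu(p'-\nu)$ is even; $\frac12\dim\Gr_{2\nu}(\R^{m'})=\nu(m'-2\nu)$ leaves only $\Gr_2(\R^{p+2})$ besides rank-one spaces; the binomials $\frac12\dim\SO(2r)/\U(r)=\binom r2$ and $\frac12\dim\Sp(r)/\U(r)=\binom{r+1}2$ are prime only for the value $3$, corresponding to $\SO(6)/\U(3)\cong\C P^3$ (rank one) and $\Sp(2)/\U(2)\cong Q_3$; and the exceptional inner types have half-dimensions $4,8,14,16,20,27,32,35,56,64$, none prime.

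With the claim available, I would finish as follows. If $\codim\cal{G}>1$, the converse part of Main Theorem~1 realizes $\cal{G}$ as a Hopf projection of $\cal{F}_{G/K}$ for some inner $G/K$ of rank greater than one with $\dim(G/K)=2p$; by the claim $G/K=Q_p$, which is Hermitian with $N(\cal{F}_{Q_p})=1$ (since $p+2$ is odd), so $\cal{G}$ is this unique, homogeneous, foliation. If $\codim\cal{G}=1$, the converse part of Main Theorem~2 realizes $\cal{G}$ as a Hopf projection of either an FKM-foliation with $m_1\le m_2$, or some $\cal{F}_{G/K}$ with $G/K$ inner of rank two, or the inhomogeneous $(g,m_1,m_2)=(4,7,8)$ foliation of $S^{31}$. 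The third possibility is excluded since it would require $n=15$ while $16$ is composite; the second is again covered by the claim ($G/K=Q_p$); and in the first, $\R^{2p}$ carries a $\Cl^*_{m+1}$-module, whose real dimension is $2$ for $m=1$ and a multiple of $4$ for $m\ge 2$, so the primality of $p$ forces $m=1$, and the $m=1$ FKM-foliation is congruent to $\cal{F}_{\Gr_2(\R^{p+2})}$ (cf.~\S\ref{subsec:auto_FKM}), reducing once more to the previous case. Thus $\cal{G}$ is homogeneous in every case.

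The step I expect to be most delicate is making the dimension count airtight: one has to be sure Cartan's list of irreducible inner compact symmetric spaces has been traversed completely, that the low-rank and exotic isomorphisms ($\SO(6)/\U(3)\cong\C P^3$, $\Sp(2)/\U(2)\cong Q_3$, $\Gr_2(\R^4)\cong S^2\times S^2$, $\mathbf{FII}$ being of rank one, and similar) are accounted for so that no second family of prime half-dimension is overlooked, and that the identification of the $m=1$ FKM-foliation with the $s$-representation foliation of $\Gr_2(\R^{p+2})$ is cited in exactly the form established earlier in the paper.
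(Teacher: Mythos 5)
Your proof is correct, but it reaches the statement by a genuinely different route than the paper. The paper obtains the prime criterion as the final clause of Theorem~\ref{th:codimension}, deducing it from the two stronger quantitative statements (i) and (ii): part (ii) is proved by checking that $\rank G/K$ divides $\dim G/K$ and $(\rank G/K)^2\le\dim G/K$ for every inner space on the list admitting inhomogeneous projections, while part (i) --- the codimension-one case --- is handled directly on the sphere for $n$ even using M\"unzner's restriction $g\in\{1,2,3,4,6\}$ together with his cohomological computation (Proposition~\ref{prop:622}), Abresch's multiplicity theorem, and Takagi's classification of the multiplicity-one case; that portion of the paper's argument is therefore independent of the deep classification of isoparametric hypersurfaces in spheres. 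You instead route the codimension-one case through the converse of Main Theorem~2, which does rest on that classification, and you dispose of the still-open $(4,7,8)$ case only because $16$ is composite --- perfectly legitimate for this particular statement, though it would not recover the paper's part (i) in full generality. On the other hand, your existence construction via $\Gr_a(\C^{a+b})$ elegantly unifies the paper's two separate families ($\SU(2+r)/\mathrm{S}(\U(2)\times\U(r))$ for codimension one and real Grassmannians for higher codimension), and your enumeration of the irreducible inner spaces of odd prime half-dimension, including the coincidences $\SO(6)/\U(3)\cong\C P^3$ and $\Sp(2)/\U(2)\cong\Gr_2(\R^5)$, is complete and accurate. Two small points to tidy up: the congruence of the $m=1$ FKM-foliation with $\cal{F}_{\Gr_2(\R^{p+2})}$ is not actually recorded in \S\ref{subsec:auto_FKM} (which only treats the $m_1>m_2$ exceptions), so it must be quoted from \cite{FKM81} directly; and in your codimension~$>1$ case the conclusion $G/K=\Gr_2(\R^{p+2})$ forces $\rank G/K=2$, so that case is in fact vacuous for prime $p$, which only strengthens your conclusion.
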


During the writing of this paper, I first encountered another article of Xiao \cite{Xi00a}, where he claims to obtain the classification of isoparametric submanifolds in $\C P^n$. However, the arguments and classification in \cite{Xi00a} seem to have several crucial gaps. Firstly, the author uses the maximality property for $s$-representations (see \S\ref{subsec:auto_homogeneous}) without actually referring to it. Secondly, the study of the inner symmetric spaces \textbf{E V} and \textbf{E VIII} is missing there. But more importantly, although he mentions that there are pairs of noncongruent isoparametric submanifolds in $\C P^n$ with congruent inverse images, surprisingly this is not reflected in his classification, since for each inner symmetric space $G/K$ considered, only one complex structure is specified. Therefore, the congruence problem (which is the main difficulty in our work) is completely disregarded, as well as the study of the homogeneity.

Our paper is organized as follows. In Section~\ref{sec:Hopf} we study the behaviour of isoparametric submanifolds with respect to the Hopf map. In Section \ref{sec:automorphisms} we find the group of automorphisms of homogeneous polar foliations on Euclidean spaces (\S\ref{subsec:auto_homogeneous}), and of FKM-foliations satisfying $m_1\leq m_2$ (\S\ref{subsec:auto_FKM}). In Section \ref{sec:foliations} we study general foliations with closed leaves on $\C P^n$, first characterizing the complex structures that preserve a given foliation on a sphere (\S\ref{subsec:preserving}), and then studying the congruence of the projected foliations (\S\ref{subsec:congruence}). We particularize this theory and obtain the corresponding classifications for
homogeneous polar foliations in Section~\ref{sec:homogeneous}, and for FKM-foliations in Section~\ref{sec:FKM}. Finally, in Section~\ref{sec:homogeneity} we study the homogeneity of the resulting isoparametric foliations on $\C P^n$.

\textbf{Acknowledgments.} This work was started during a stay at the University of Cologne in Spring 2011. I am deeply indebted to my host Prof.\ Gudlaugur Thorbergsson for many enlightening discussions, helpful ideas, and for his interest in this work. I would also like to thank my advisor Prof.\ Jos\'e Carlos D\'iaz-Ramos for his constant support.

\section{Isoparametric submanifolds and the Hopf map}\label{sec:Hopf}

In this section we study the behaviour of isoparametric submanifolds with respect to the Hopf map and comment on some related questions.

Let us first recall the construction of the complex projective space $\C P^n$.
Consider the Euclidean space $\R^{2n+2}$ with the usual scalar product $\langle \cdot,\cdot \rangle$, and take a complex structure $J$ on $\R^{2n+2}$. Then $J$ induces a principal fiber bundle with total space the unit sphere $S^{2n+1}$, with base space the complex projective space $\C P^n$, and with structural group $S^1$; the corresponding projection $\pi\colon S^{2n+1}\to\C P^n$ is called the Hopf map. For every point $x\in S^{2n+1}$, the vector $Jx\in T_x S^{2n+1}$ is vertical, i.e.\ tangent to the $S^1$-fibers of $\pi$, and these fibers are geodesics in the sphere. Consider the distribution $\mathcal{H}$ on $S^{2n+1}$ defined by the orthogonal subspaces to the fibers. Then the differential of $\pi$ induces a linear isomorphism of $\mathcal{H}_x$ onto $T_{\pi(x)}\C P^n$, for each $x\in S^{2n+1}$. The Fubini-Study metric on $\C P^n$ of constant holomorphic sectional curvature $4$ is defined by $\langle X,Y\rangle=\langle \widetilde{X},\widetilde{Y}\rangle$, where $X,Y\in T_{\pi(x)}\C P^n$ and $\widetilde{X}, \widetilde{Y}$ are their horizontal lifts at $x$. Moreover, the complex structure $J$ on $\R^{2n+2}$ leaves $\mathcal{H}$ invariant and induces via $\pi$ the canonical K\"ahler structure $J$ on $\C P^n$.

By construction, the Hopf map is a Riemannian submersion. Denote by $\widetilde{\nabla}$ and $\nabla$ the Levi-Civita connections of $S^{2n+1}$ and $\C P^n$, respectively. Then, for all tangent vector fields $X,Y$ on $\C P^n$ we have that
$\widetilde{\nabla}_{\widetilde{X}}\widetilde{Y}=\widetilde{\nabla_X Y}+O'N(\widetilde{X},\widetilde{Y})$.
Here $O'N$ is one of the tensors of O'Neill, which satisfies $O'N(\widetilde{X},\widetilde{Y})=(\widetilde{\nabla}_{\widetilde{X}}\widetilde{Y})^\mathcal{V}=\frac{1}{2}[\widetilde{X},\widetilde{Y}]^\mathcal{V}$, where $(\cdot)^\mathcal{V}$ denotes orthogonal projection onto the vertical space.

Heintze, Liu, and Olmos showed in \cite[Th.~3.4]{HLO06} that, if $\pi\colon E \to B$ is a Riemannian submersion with minimal fibers and $M \subset B$ an embedded submanifold, then $\widetilde{M}= \pi^{-1}M$ is isoparametric with horizontal sections if and only if $M$ is isoparametric and $O'N = 0$ on all horizontal lifts of tangent vectors to sections of $M$; moreover, in this situation, $\pi$ maps sections of $\widetilde{M}$ to sections of $M$. Using this result, we can show the following.

\begin{proposition}\label{prop:HopfMap}
Let $M$ be an embedded submanifold of positive dimension in $\C P^n$, and $\widetilde{M}=\pi^{-1} M$ its lift to $S^{2n+1}$. Then $M$ is isoparametric if and only if $\widetilde{M}$ is isoparametric.

In this situation, $\pi$ maps sections of $\widetilde{M}$ (which are horizontal) to sections of $M$ (which are totally real).
\end{proposition}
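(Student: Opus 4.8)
The plan is to invoke the Heintze--Liu--Olmos theorem \cite[Th.~3.4]{HLO06} quoted above, applied to the Riemannian submersion $\pi\colon S^{2n+1}\to\C P^n$. Since the Hopf fibres are geodesics, they are in particular minimal submanifolds of $S^{2n+1}$, so the hypotheses of that theorem are met. Thus the whole proposition will reduce to a single clean assertion: \emph{the O'Neill tensor $O'N$ vanishes on the horizontal lifts of vectors tangent to any section of $M$}. Granting this, the HLO theorem immediately gives ``$M$ isoparametric $\iff$ $\widetilde M$ isoparametric with horizontal sections'', together with the statement that $\pi$ carries sections of $\widetilde M$ to sections of $M$; the one remaining direction (``$\widetilde M$ isoparametric $\Rightarrow$ $M$ isoparametric'') is handled by noting that $\widetilde M=\pi^{-1}M$ is automatically $S^1$-invariant, hence its sections, being totally geodesic in $S^{2n+1}$ and of complementary dimension, are forced to be horizontal --- otherwise a section would contain a vertical direction and $\widetilde M$ would locally split off a Hopf circle, contradicting that the section meets $\widetilde M$ orthogonally in a submanifold of the right dimension.

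So the key step, and the technical heart of the argument, is to show that a section $\Sigma$ of $M$ in $\C P^n$ is \emph{totally real}, i.e.\ $J(T_p\Sigma)\perp T_p\Sigma$ for $p\in M$, and that this is exactly the condition making $O'N$ vanish on its horizontal lift. For the second half: recall $O'N(\widetilde X,\widetilde Y)=\tfrac12[\widetilde X,\widetilde Y]^{\mathcal V}$, and for a Riemannian submersion the vertical component of $[\widetilde X,\widetilde Y]$ at $x$ is measured by $\langle [\widetilde X,\widetilde Y], Jx\rangle$, which by the standard formula for the Hopf fibration equals (up to sign and factor) $\langle J\widetilde X,\widetilde Y\rangle$, i.e.\ the value of the Kähler form. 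Hence $O'N$ vanishes on lifts of $T\Sigma$ precisely when $\langle J\widetilde X,\widetilde Y\rangle=0$ for all $X,Y\in T\Sigma$, and since $\pi_*$ is an isometry from $\mathcal H_x$ onto $T_{\pi(x)}\C P^n$ intertwining $J$, this is equivalent to $\omega|_{T\Sigma}=0$, i.e.\ $\Sigma$ is a totally real submanifold.

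It therefore remains to prove that the section $\Sigma_p$ of an isoparametric $M\subset\C P^n$ is totally real. Here I would argue as follows. Since $\Sigma_p$ is totally geodesic in $\C P^n$ and meets the isoparametric submanifold $M$ orthogonally at the regular point $p$ with $\dim\Sigma_p=\codim M$, the tangent space $T_p\Sigma_p=\nu_p M$. If $\Sigma_p$ were not totally real, the $J$-invariant part of $\nu_pM$ would be nontrivial; but a totally geodesic submanifold of $\C P^n$ through a point is either totally real (and then locally a real space form $\R H$ or $\R P$) or has a complex tangent summand, and in the latter case it would contain a totally geodesic $\C P^k$ through $p$. One then has to show such complex directions in the normal space are incompatible with condition (c) in the definition of isoparametric --- essentially because the existence of a full-dimensional totally geodesic section with a complex direction, combined with flatness of $\nu M$ (condition (a)) and the curvature of $\C P^n$ in complex directions, is obstructed (the sectional curvatures of a totally geodesic submanifold must match those of $\C P^n$, but a flat normal bundle forces the section to be flat along $M$, which rules out nonzero holomorphic curvature). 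I expect \textbf{this last point} --- rigorously excluding $J$-invariant directions in the normal space using conditions (a) and (c) together with the curvature of $\C P^n$ --- to be the main obstacle; it is the step where the special geometry of $\C P^n$ (as opposed to a general Kähler manifold) is genuinely used, and it may require a short computation with the Gauss equation for $\Sigma_p\hookrightarrow\C P^n$ and with the focal structure of $M$ rather than a purely formal deduction.
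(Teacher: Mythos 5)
Your reduction to the Heintze--Liu--Olmos theorem, the observation that $S^1$-invariance of $\widetilde M$ forces its sections to be horizontal, and the computation identifying the vanishing of $O'N$ on lifts of section-tangent vectors with the totally real condition all coincide with the paper's argument. The one step you explicitly flag as the ``main obstacle'' --- proving that the sections of an isoparametric $M\subset\C P^n$ are totally real --- is, however, left with a genuine gap, and the sketch you offer for it does not work. You claim that ``a flat normal bundle forces the section to be flat along $M$, which rules out nonzero holomorphic curvature.'' Flatness of the normal bundle means the normal connection has zero curvature; it does not make the section flat as a Riemannian manifold (in $S^{2n+1}$, isoparametric submanifolds have flat normal bundle while their sections are great spheres of curvature $1$), so no contradiction with the holomorphic sectional curvature of a complex section follows from that statement.

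The paper closes this step differently and much more cheaply. Since every totally geodesic submanifold of $\C P^n$ is either complex or totally real, and by continuity either all sections are complex or all are totally real, one only has to exclude the complex case: if the sections were complex, then every normal space $\nu_pM$ would be $J$-invariant, hence so would every tangent space, making $M$ a complex --- therefore K\"ahler --- submanifold of $\C P^n$ with flat normal bundle; this is impossible by a theorem of Alekseevsky and Di Scala (\cite[Th.~19]{AD04}), which says no such positive-dimensional submanifold exists. The curvature computation you anticipate (via the Ricci equation for a $J$-invariant pair of normal directions) is essentially what underlies that cited theorem, but as written your argument neither carries it out nor invokes a result that does, so the proof is incomplete at precisely the point you identified as critical.
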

\begin{proof}
First notice that the fibers of the Hopf map are minimal (in fact, totally geodesic) and that, if $\widetilde{M}$ is isoparametric, it necessarily has horizontal sections (since $\widetilde{M}$ is union of $S^1$-fibers). Therefore, by the result in \cite{HLO06}, if $\widetilde{M}$ is isoparametric, then $M$ is isoparametric. 

Assume now that $M$ is isoparametric. Let $X,Y$ be arbitrary tangent vector fields to the sections of $M$. Denote by $\xi$ the outer unit normal vector field to $S^{2n+1}$, so $J\xi$ is a vertical vector field on $S^{2n+1}$. Let $D$ be the Levi-Civita connection of $\R^{2n+2}$. We have:
\begin{align*}
\langle O'N(\widetilde{X},\widetilde{Y}),J\xi\rangle & = \langle \widetilde{\nabla}_{\widetilde{X}} \widetilde{Y},J\xi\rangle = \langle D_{\widetilde{X}} \widetilde{Y}, J\xi \rangle = -\langle D_{\widetilde{X}} J\widetilde{Y},\xi\rangle=\langle J\widetilde{Y},\widetilde{X}\rangle=\langle JY,X\rangle,
\end{align*}
since $S^{2n+1}$ is a totally umbilical hypersurface in $\R^{2n+2}$, and $J\mathcal{H}=\mathcal{H}$. Hence, the proposition will follow from the result in \cite{HLO06} once we show that sections of $M$ are totally real.

It is known that any totally geodesic submanifold of $\C P^n$ must be either a totally real or a complex submanifold. By continuity, if one section of $M$ is complex, then all sections of $M$ are complex. 
However, if the sections of $M$ were complex, then $M$ would be a complex submanifold of $\C P^n$, and hence K\"ahler, but this is impossible because there are no K\"ahler submanifolds of positive dimension in $\C P^n$ with flat normal bundle (see, for example, \cite[Th.\ 19]{AD04}). Hence, all sections of $M$ are totally real and the result follows.
\end{proof}

Proposition \ref{prop:HopfMap} guarantees that every isoparametric submanifold in a complex projective space can be obtained by projecting some isoparametric submanifold in a sphere under the Hopf map. 

\begin{remark}\label{rem:global_foliation}
As well as for space forms, every isoparametric submanifold in $\C P^n$ can be extended to a global isoparametric foliation on $\C P^n$. Let us show this. Every isoparametric submanifold extends locally to an isoparametric foliation. By Proposition \ref{prop:HopfMap} the lift of this local foliation to an open set $U$ of $S^{2n+1}$ is again isoparametric. By \cite[Th.~3.4]{Te87} and \cite[Th.~D]{Te85}, a local isoparametric foliation of $S^{2n+1}$ can be extended to an isoparametric foliation $\mathcal{F}$ of the whole sphere in a unique way; moreover, this foliation is defined by the level sets of the restriction $F\rvert_{S^{2n+1}}$ of a polynomial function $F=(F_1,\ldots,F_k)\colon\R^{2n+2}\to\R^k$, where $k$ is the lowest codimension of the leaves, and the gradients $\nabla F_1,\ldots,\nabla F_k$ define $k$ global normal vector fields on every leaf; on each regular leaf these fields conform a basis of the normal space. Consider the analytic function $f\colon S^{2n+1}\to \R^k$, defined by $x\mapsto (\langle Jx,(\nabla F_1)_x\rangle,\ldots, \langle Jx,(\nabla F_k)_x\rangle)$.
Since $f$ is constantly equal to zero in $U$ (the leaves of this local foliation are foliated by Hopf fibers), by analiticity we get that $f=0$ identically on $S^{2n+1}$, and therefore, $\mathcal{F}$ can be projected to a global isoparametric foliation on $\C P^n$.
\end{remark}

Every isoparametric foliation on a sphere determines an isoparametric foliation on the whole Euclidean space via homotheties. Conversely, if the leaves of an isoparametric foliation on a Euclidean space are compact, then they are contained in concentric spheres. Moreover, an isoparametric foliation of codimension $k-1$ on a sphere is said to be irreducible if its associated Coxeter system of rank $k$ (in the sense of Terng \cite{Te85}) is irreducible, or equivalently, if there is no proper totally geodesic submanifold of the sphere being a union of leaves of the foliation. 
Similarly, we will say that an isoparametric foliation on a complex projective space $\C P^n$ is \emph{irreducible} if there is no proper totally geodesic complex projective subspace $\C P^k$, $k<n$, that is a union of leaves of the foliation. Hence, an isoparametric foliation on a complex projective space is irreducible if and only if its lift to the sphere $S^{2n+1}$ is an irreducible isoparametric foliation. This follows from the fact that the only totally geodesic submanifolds of $S^{2n+1}$ which are foliated by Hopf circles are intersections of $S^{2n+1}$ with complex subspaces of $\C^{n+1}$. 

According to Proposition~\ref{prop:HopfMap}, the problem of classifying irreducible isoparametric foliations on $\C P^n$ amounts to determining which irreducible isoparametric foliations on $S^{2n+1}$ are such that their leaves contain the $S^1$-fibers of the Hopf map. 
Our approach lies, therefore, on the classification of isoparametric foliations on spheres. For irreducible isoparametric foliations of codimension greater than one, Thorbergsson's theorem \cite{Th91} guarantees that they are exactly orbit foliations $\cal{F}_{G/K}$ of isotropy representations of irreducible semisimple symmetric spaces $G/K$. In codimension one, M\"unzner's result \cite[I]{Mu80} ensures that the number of principal curvatures of an isoparametric hypersurface in a sphere is $g\in\{1, 2, 3, 4, 6\}$, and the corresponding multiplicities $m_1,\ldots,m_g$ satisfy $m_i=m_{i+2}$ (indices modulo $g$). As already commented, the classification of isoparametric hypersurfaces in spheres has been completed, except if $(g,m_1,m_2)=(4,7,8)$. For more information on this problem, see \cite{CCJ07}, \cite{Im08}, \cite{Ch11b}, \cite{Mi09}, and references therein. 

These results imply that every irreducible isoparametric foliation on a sphere is an FKM-foliation or a homogeneous polar foliation, excluding the exceptional case of codimension one. We finish this section with a result that will be needed later.

\begin{proposition}\label{prop:622}
Let $M$ be an isoparametric hypersurface in a sphere with $(g,m_1,m_2)\in\{(4,2,2),(6,2,2)\}$. Then $M$ is not the pullback of a hypersurface in $\C P^n$ ($n=4,6$) under some Hopf map.
\end{proposition}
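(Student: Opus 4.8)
The idea is to argue by contradiction, supposing that such a hypersurface $M\subset S^{2n+1}$ (with $n=4$ or $n=6$) is the pullback $\widetilde{M}_0=\pi^{-1}(M_0)$ of an isoparametric hypersurface $M_0\subset\C P^n$ under some Hopf map $\pi$, and to derive a contradiction by a dimension count on the sections. By Proposition~\ref{prop:HopfMap}, $M$ is then a union of Hopf circles and its sections are horizontal and totally real; in particular, a section $\Sigma\subset S^{2n+1}$ of the codimension-one foliation $\mathcal{F}$ extending $M$ is a circle (a closed geodesic of $S^{2n+1}$) which must be \emph{horizontal}, i.e.\ orthogonal to the Hopf vector field $Jx$ at every one of its points. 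The plan is to show that $\mathcal{F}$ admits no such horizontal section, no matter which complex structure $J$ is chosen.

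First I would recall the explicit description of $\mathcal{F}$ as a homogeneous polar foliation. In the case $(g,m_1,m_2)=(4,2,2)$ the foliation is $\mathcal{F}_{G/K}$ with $G/K$ the real oriented Grassmannian $\widetilde{\Gr}_2(\R^6)=\SO(6)/(\SO(2)\times\SO(4))\cong \C P^3$ (an $s$-representation acting on $\R^{10}=T_{eK}(G/K)$), and in the case $(g,m_1,m_2)=(6,2,2)$ it is $\mathcal{F}_{G_2/\SO(4)}$, the isotropy foliation of the exceptional symmetric space $\G_2/\SO(4)$ on $\R^{14}$. In both cases the regular leaves are principal orbits $K\cdot x\subset S^{2n+1}$, and a section through a regular point $x$ is the unit circle in a maximal abelian subspace $\mathfrak{a}$ of the isotropy module, so $\Sigma=S^1\cap\mathfrak{a}$ with $\dim\mathfrak{a}=2$; equivalently, the section is $\exp(\mathbb{R}\,v)\cdot x$ for a suitable vector, tracing out a great circle. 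The key geometric point is that the sections of an isoparametric foliation on a sphere, being totally geodesic circles, sweep out a $2$-plane through the origin in $\R^{2n+2}$.

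Next I would use the characterization of complex structures preserving $\mathcal{F}$ from \S\ref{subsec:preserving}: if $J$ preserves $\mathcal{F}$, then $J$ must lie in (the Lie algebra of) the automorphism group of $\mathcal{F}$, and in particular $J$ normalizes the algebra generated by the isotropy action, so $Jx$ is tangent to the leaf through $x$ for every regular $x$; simultaneously $J$ must map each section $2$-plane to a totally real $2$-plane. Combining these: a section $\Sigma$ through $x$ is horizontal iff $Jx\perp\mathfrak{a}$ and $J\mathfrak{a}\perp\mathfrak{a}$ (totally real). But $Jx$ being tangent to the regular leaf $K\cdot x$ means $Jx\in T_x(K\cdot x)=(\mathfrak{a})^{\perp}$ within the unit sphere of the isotropy module — this is automatic. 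So the real obstruction is whether the whole $2$-plane $\mathfrak{a}$ can be chosen horizontal, equivalently whether there is a regular $x$ with $Jx\perp\mathfrak{a}$. Here I would invoke that the union of all sections is all of $S^{2n+1}$ (the foliation is polar), so every unit vector lies in some section $2$-plane $\mathfrak{a}_v$; choosing $x$ in $\mathfrak{a}_v$ with $Jx$ not in $\mathfrak{a}_v$ forces $\mathfrak{a}_v$ non-horizontal, and one must show this happens for \emph{all} $v$. The clean way is a rank/parity count: the multiplicities are $(m_1,m_2)=(2,2)$, and a horizontal section would descend under $\pi$ to a totally geodesic $\R P^1\subset\C P^n$ along which the focal structure of $M_0$ has the multiplicities dictated by M\"unzner–type relations for $\C P^n$; but for an isoparametric hypersurface of $\C P^n$ the focal data, read off from the preimage, cannot realize $g=4$ or $g=6$ with equal multiplicities $2$ in dimensions $2n+1=9,13$ once one circle direction is quotiented out — the arithmetic $n=\sum(\text{mult})\cdot(\text{something})$ fails. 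Concretely: the preimage of an isoparametric hypersurface of $\C P^n$ has multiplicities that include the "Hopf direction" contribution, and a short check shows $(4,2,2)$ in $S^9$ and $(6,2,2)$ in $S^{13}$ are incompatible with being such a preimage.

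The main obstacle will be the last step — ruling out \emph{every} choice of complex structure $J$ rather than one convenient choice. I expect the cleanest route is to observe that if $J$ preserves $\mathcal{F}$ then $J$ lies in the identity component of $\Aut(\mathcal{F})$, compute that group (using \S\ref{subsec:auto_homogeneous}: for these $s$-representations it is essentially $N_{\O(2n+2)}(K)$, here $\SO(6)\cdot\{\pm1\}$ respectively $\G_2\cdot\{\pm1\}$ up to finite extensions), note that its Lie algebra contains no element squaring to $-\Id$ that is simultaneously horizontal-section-compatible, or alternatively that any such $J$ would make the quotient $S^{2n+1}/S^1=\C P^n$ inherit a codimension-one isoparametric foliation of $\C P^3$ (resp.\ $\C P^6$) whose preimage has $(g,m_1,m_2)=(4,2,2)$ (resp.\ $(6,2,2)$) — and then to contradict the known list of isoparametric hypersurfaces in $\C P^3$ and $\C P^6$ (all of which, by the results to be established later in the paper, pull back to foliations with multiplicities differing from $(2,2)$: e.g.\ the homogeneous ones on $\C P^3$ pull back to $(g,m_1,m_2)=(4,1,3)$ or $(2,\cdot,\cdot)$, never $(4,2,2)$). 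Either way, the contradiction is obtained and $M$ cannot be a Hopf pullback.
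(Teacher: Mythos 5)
Your proposal does not contain a workable argument, and it is quite far from the paper's proof, which is a short topological one: by M\"unzner's computation of $H^*(M,\mathbb{Z}_2)$ the odd Betti numbers of $M$ vanish and $\chi(M)=2g\neq 0$, whereas a hypersurface foliated by Hopf circles carries the nowhere-zero tangent field $Jx$ and hence has $\chi(M)=0$ by Poincar\'e--Hopf. Your text never reaches a contradiction of comparable substance. The ``horizontal section'' obstruction you set up is vacuous: as you yourself half-observe, if $J$ preserves the foliation then $Jx$ is tangent to the leaf through $x$, and since the tangent line to the section at $x$ is normal to the leaf and $Jx\perp x$, the sections are \emph{automatically} horizontal (this is exactly the remark in the proof of Proposition~\ref{prop:HopfMap}); nothing is ruled out this way. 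The ``rank/parity count'' that is supposed to carry the day is never performed --- there is no elementary M\"unzner-type arithmetic that excludes $(4,2,2)$ in $S^9$ or $(6,2,2)$ in $S^{13}$ from being Hopf-invariant, which is presumably why the paper resorts to the Euler characteristic. And the final fallback, comparing against ``the known list of isoparametric hypersurfaces in $\C P^3$ and $\C P^6$ established later in the paper,'' is circular: Proposition~\ref{prop:622} is itself an ingredient in the proof of Theorem~\ref{th:codimension}, which produces that list.

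Two further factual errors: the homogeneous models are misidentified. The foliation with $(g,m_1,m_2)=(4,2,2)$ lives in $S^9$ and is the orbit foliation of the adjoint representation of $\SO(5)$, i.e.\ of the group manifold $(\SO(5)\times\SO(5))/\Delta\SO(5)$, not of $\widetilde{\Gr}_2(\R^6)$ (which has dimension $8$); likewise $(6,2,2)$ lives in $S^{13}$ and comes from the group manifold $(\G_2\times\G_2)/\Delta\G_2$, not from $\G_2/\SO(4)$ (dimension $8$, multiplicities $(1,1)$). Ironically, the correct identification would have given you a legitimate alternative proof: group manifolds of nonabelian compact groups are never inner symmetric spaces, so Theorem~\ref{th:reduction} shows these foliations admit no preserving complex structure --- at the cost of invoking the homogeneity classifications for $g=4$ and $g=6$ (the latter being Miyaoka's theorem) and the later machinery of Section~\ref{sec:homogeneous}, which the paper's self-contained cohomological argument avoids.
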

\begin{proof}
M\"unzner \cite[II]{Mu80} determined the cohomology rings $H^*(M,\mathbb{Z}_2)$ of isoparametric hypersurfaces $M$ in spheres. It follows from this result that $H^q(M,\mathbb{Z}_2)=0$ for all odd integers $q\in\{1,\ldots,\dim M\}$, and that $2g=\dim_{\mathbb{Z}_2} H^*(M,\mathbb{Z}_2)$. Therefore the Euler characteristic of $M$ is $\chi(M)=2g\neq 0$. This implies that $M$ is not foliated by Hopf circles: otherwise, the complex structure $J$ would determine a globally defined non-vanishing tangent vector field on $M$, which would imply $\chi(M)=0$ because of the Hopf index theorem.
\end{proof}

\section{The group of automorphisms of an isoparametric foliation}\label{sec:automorphisms}
Our aim in this section is to determine the whole (not necessarily connected) group of orthogonal transformations leaving invariant a given isoparametric foliation on a sphere. We will call these transformations the automorphisms of the foliation. We carry out this study for the case of orbit foliations of $s$-representations (or, equivalently, for homogeneous polar foliations) in \S\ref{subsec:auto_homogeneous}, and for the case of FKM-foliations satisfying $m_1\leq m_2$ in~\S\ref{subsec:auto_FKM}.

\subsection{The group of automorphisms of a homogeneous polar foliation}\label{subsec:auto_homogeneous}

Dadok \cite{Da85} classified homogeneous polar foliations (or equivalently, polar actions up to orbit equivalence) on Euclidean spaces. He proved that these foliations are orbit foliations of isotropy representations of (Riemannian) symmetric spaces.

Since any homogeneous polar foliation on a Euclidean space is the product of a homogeneous polar foliation with compact leaves times an affine subspace, we will just consider homogeneous polar foliations with compact leaves. This means that the symmetric space $G/K$ whose isotropy representation defines the foliation is semisimple. Moreover, since the duality between symmetric spaces of compact and noncompact type preserves their isotropy representations, we will assume that $G/K$ is of compact type.

Given a compact symmetric pair $(G,K)$, we will write the Cartan decomposition of the Lie algebra $\g{g}$ of $G$ as $\g{g}=\g{k}\oplus\g{p}$, where $\g{k}$ is the Lie algebra of the isotropy group $K$ and $\g{p}$ is the orthogonal complement of $\g{k}$ in $\g{g}$ with respect to the Killing form $B_\g{g}$ of $\g{g}$, which is negative definite. 
Moreover, $\g{p}$ is endowed with the metric $\langle\cdot,\cdot\rangle=-B_\g{g}\rvert_{\g{p}\times\g{p}}$. The $s$-representation of $(G, K)$ can be seen as the adjoint representation $K\to \O(\g{p})$, $k\mapsto \Ad(k)\rvert_\g{p}$. 

We will say that a symmetric pair $(G,K)$ satisfies the \emph{maximality property} if it is effective, $K$ is connected, and $\Ad(K)\rvert_\g{p}$ is the maximal connected subgroup of $\O(\g{p})$ acting on $\g{p}$ with the same orbits as the $s$-representation of $(G,K)$.

Let $(G,K)$ be an effective compact symmetric pair, with $K$ connected. Eschenburg and Heintze proved that, if $G/K$ is irreducible and of rank greater than two, then $(G,K)$ satisfies the maximality property \cite{EH99an}; the same holds if $G/K$ is irreducible and of rank two (this follows from \cite{Da85}; cf. \cite[p.~392, Remark~1]{EH99mz}). If $G/K$ is reducible, then $G/K$ satisfies the maximality property whenever all irreducible factors of rank equal to one and dimension $n$ are assumed to be spheres represented by the symmetric pair $(\SO(n+1),\SO(n))$ (see~\cite[p.~391]{EH99mz}). Therefore, for the study of geometric properties of the foliations induced by $s$-representations, it is not a restriction of generality to assume that the corresponding symmetric pairs satisfy the maximality property.

This property allows us to determine the whole group $\Aut(\cal{F}_{G/K})$ of automorphisms of the orbit foliation $\cal{F}_{G/K}$ of the isotropy representation of $G/K$. If $(G,K)$ satisfies the maximality property, then the identity connected component of $\Aut(\cal{F}_{G/K})$ is $\Ad(K)\rvert_\g{p}$, but $\Aut(\cal{F}_{G/K})$ might have several connected components.

\begin{theorem}\label{th:automPolar}
Let $(G,K)$ be a compact symmetric pair that satisfies the maximality property, and with Cartan decomposition $\g{g}=\g{k}\oplus\g{p}$. 

Then there is a Lie group isomorphism between the group $\Aut(\g{g},\g{k})$ of automorphisms of $\g{g}$ that restrict to automorphisms of $\g{k}$ and the group $\Aut(\cal{F}_{G/K})$ of orthogonal transformations of $\g{p}$ that map leaves of $\cal{F}_{G/K}$ to leaves of $\cal{F}_{G/K}$.
\end{theorem}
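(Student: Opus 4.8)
The plan is to construct an explicit map in each direction and show they are mutually inverse Lie group homomorphisms. The natural candidate map $\Phi\colon\Aut(\g{g},\g{k})\to\Aut(\cal{F}_{G/K})$ sends an automorphism $\varphi\in\Aut(\g{g},\g{k})$ to its restriction $\varphi\rvert_\g{p}$. First I would check this is well-defined: since $\varphi$ preserves $\g{k}$, it preserves the Killing form $B_\g{g}$, hence preserves the orthogonal complement $\g{p}$ of $\g{k}$ and acts orthogonally on $(\g{p},\langle\cdot,\cdot\rangle)$. Moreover $\varphi$ conjugates the $s$-representation to itself in the sense that $\varphi\circ\Ad(k)\rvert_\g{p}=\Ad(\tilde k)\rvert_\g{p}\circ\varphi$ for a suitable $\tilde k$ obtained from the automorphism induced by $\varphi$ on $\g{k}$ (using that $K$ is connected, so $\Ad(K)$ is determined by $\g{k}$); therefore $\varphi\rvert_\g{p}$ carries $\Ad(K)\rvert_\g{p}$-orbits to $\Ad(K)\rvert_\g{p}$-orbits, i.e.\ leaves of $\cal{F}_{G/K}$ to leaves. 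Clearly $\Phi$ is a homomorphism, and it is injective because $\g{g}=\g{k}\oplus\g{p}=[\g{p},\g{p}]\oplus\g{p}$ by effectiveness (the ideal $\g{k}$-part is generated by brackets of $\g{p}$, since $(G,K)$ is effective and of compact type, so an automorphism is determined by its restriction to $\g{p}$).

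The substance of the proof is surjectivity, and this is where the maximality property enters decisively. Given $A\in\Aut(\cal{F}_{G/K})\subset\O(\g{p})$, I want to produce $\varphi\in\Aut(\g{g},\g{k})$ with $\varphi\rvert_\g{p}=A$. The idea: $A$ conjugates the connected group $\Ad(K)\rvert_\g{p}$ to another connected subgroup $A\,\Ad(K)\rvert_\g{p}\,A^{-1}$ of $\O(\g{p})$ which has exactly the same orbits as $\cal{F}_{G/K}$ (because $A$ permutes the leaves). By the maximality property, $\Ad(K)\rvert_\g{p}$ is the maximal connected subgroup of $\O(\g{p})$ with these orbits, so $A\,\Ad(K)\rvert_\g{p}\,A^{-1}\subseteq\Ad(K)\rvert_\g{p}$, and by maximality (equality of dimensions forces equality of connected groups) in fact $A\,\Ad(K)\rvert_\g{p}\,A^{-1}=\Ad(K)\rvert_\g{p}$. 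Hence conjugation by $A$ is an automorphism of the Lie group $\Ad(K)\rvert_\g{p}\cong K$ (using effectiveness so that $\Ad\rvert_\g{p}$ is faithful on $K$), inducing an automorphism $\psi$ of $\g{k}$. Now define $\varphi$ on $\g{g}=\g{k}\oplus\g{p}$ by $\varphi\rvert_\g{k}=\psi$ and $\varphi\rvert_\g{p}=A$; the remaining task is to verify $\varphi\in\Aut(\g{g})$, i.e.\ that it respects all brackets $[\g{k},\g{k}]$, $[\g{k},\g{p}]$, $[\g{p},\g{p}]$.

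For the bracket checks: $[\g{k},\g{k}]\subseteq\g{k}$ is handled since $\psi$ is an automorphism of $\g{k}$ by construction. For $[\g{k},\g{p}]\subseteq\g{p}$: the infinitesimal version of the conjugation relation $A\,\Ad(k)\rvert_\g{p}\,A^{-1}=\Ad(\psi\text{-image of }k)\rvert_\g{p}$ differentiates to $A\,\ad(X)\rvert_\g{p}\,A^{-1}=\ad(\psi X)\rvert_\g{p}$ for $X\in\g{k}$, which says precisely $A[X,Y]=[\psi X,AY]$ for $Y\in\g{p}$, i.e.\ $\varphi[X,Y]=[\varphi X,\varphi Y]$. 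For $[\g{p},\g{p}]\subseteq\g{k}$: I would use that for $X,Y\in\g{p}$ the element $[X,Y]\in\g{k}$ is characterized by $\langle[X,Y],Z\rangle=-B_\g{g}([X,Y],Z)$ against $Z\in\g{k}$, and translate via $\ad(Z)$ into pairings $\langle Y,[Z,X]\rangle$ already known to transform correctly — alternatively, observe that $[X,Y]$ is determined by the requirement that $\ad([X,Y])\rvert_\g{p}=[\ad(X)\rvert_\g{p},\ad(Y)\rvert_\g{p}]$ acting on $\g{p}$ (the $s$-representation being faithful), and conjugating this identity by $A$ together with the $[\g{k},\g{p}]$ case gives $\ad(A[X,Y])\rvert_\g{p}=\ad([\psi X',\psi Y'])\rvert_\g{p}$... more cleanly: define $\varphi[X,Y]:=[\varphi X,\varphi Y]$ and check consistency using faithfulness of $\ad\rvert_\g{p}$ on $\g{k}$. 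I expect this last bracket identity to be the main obstacle, since it is the one place where neither hypothesis acts directly; the resolution is that effectiveness makes the $s$-representation $\ad\rvert_\g{p}\colon\g{k}\to\g{so}(\g{p})$ injective, so an element of $\g{k}$ is pinned down by how it acts on $\g{p}$, and all the needed actions on $\g{p}$ are governed by $A$ and the already-verified $[\g{k},\g{p}]$ relation. Finally, $\Phi(\varphi)=A$ by construction, $\Phi$ and the assignment $A\mapsto\varphi$ are inverse to each other, and both are smooth (they are restriction/extension maps between algebraic groups), giving the Lie group isomorphism.
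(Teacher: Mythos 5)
Your overall architecture coincides with the paper's: the restriction map $\varphi\mapsto\varphi\rvert_\g{p}$, injectivity from effectiveness, and surjectivity by using maximality to get $A\,\Ad(K)\rvert_\g{p}\,A^{-1}=\Ad(K)\rvert_\g{p}$, extracting an automorphism $\psi$ of $\g{k}$ from $\ad(\psi X)\rvert_\g{p}=A\,\ad(X)\rvert_\g{p}\,A^{-1}$, and extending $A$ by $\psi$. The well-definedness, injectivity and $[\g{k},\g{p}]$ checks are fine (the last is indeed immediate from the defining relation of $\psi$, as you note).

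The gap is exactly where you suspected it: the verification $\psi[P_1,P_2]=[AP_1,AP_2]$ for $P_1,P_2\in\g{p}$. Your ``cleaner'' alternative --- pin down $[P_1,P_2]$ by faithfulness of $\ad(\cdot)\rvert_\g{p}$ on $\g{k}$ and conjugate --- is circular: to compare $\ad([AP_1,AP_2])\rvert_\g{p}$ with $A\,\ad([P_1,P_2])\rvert_\g{p}A^{-1}$, or to expand $A[[P_1,P_2],Z]$ via Jacobi, you need to know how brackets of two elements of $\g{p}$ transform, which is the statement being proved; moreover the commutator $[\ad(P_1)\rvert_\g{p},\ad(P_2)\rvert_\g{p}]$ does not typecheck as written, since $\ad(P)\rvert_\g{p}$ maps $\g{p}$ into $\g{k}$. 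Your first suggestion (pair $[AP_1,AP_2]$ against $Z\in\g{k}$ via $B_\g{g}$ and move $\ad$ around) is the right route and is what the paper does, but it is incomplete as stated: the chain $B_\g{g}([AP_1,AP_2],Z)=\dots=B_\g{g}([P_1,P_2],\psi^{-1}Z)$ only yields $\psi[P_1,P_2]=[AP_1,AP_2]$ if you also know that $\psi$ is orthogonal with respect to $B_\g{g}\rvert_{\g{k}\times\g{k}}$. That is not automatic: an automorphism of $\g{k}$ preserves the Killing form $B_\g{k}$ of $\g{k}$, but $B_\g{g}(X,W)=B_\g{k}(X,W)+\tr_\g{p}(\ad(X)\ad(W))$ is a different bilinear form (for instance it is nondegenerate on the center of $\g{k}$ in the Hermitian case, where $B_\g{k}$ vanishes identically). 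The missing lemma --- proved in the paper via $\tr_\g{p}(\ad(\psi X)\ad(\psi W))=\tr_\g{p}(A\,\ad(X)A^{-1}A\,\ad(W)A^{-1})=\tr_\g{p}(\ad(X)\ad(W))$ --- is precisely that $\psi$ preserves $B_\g{g}\rvert_{\g{k}\times\g{k}}$. With that inserted, your first sketch closes and the argument is complete.
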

\begin{proof}
We will show that the restriction map $\Psi\colon\Aut(\g{g},\g{k})\to\Aut(\cal{F}_{G/K})$, $\varphi\mapsto\varphi\rvert_\g{p}$ yields the desired isomorphism.
Let $\varphi\in \Aut(\g{g},\g{k})$. Clearly $\varphi\rvert_\g{p}\in \O(\g{p})$. Since $\varphi$ preserves $\g{k}$, if we fix $P\in\g{p}$, we easily get that $\varphi(\Ad(K)P)=\Ad(K)\varphi(P)$.
Hence $\varphi\rvert_\g{p}$ sends the orbit through $P$ to the orbit through $\varphi(P)$, from where it follows that $\Psi$ is a well-defined Lie group homomorphism.
In order to show that $\Psi$ is one-to-one, let $\varphi\in \Aut(\g{g},\g{k})$ with $\varphi\rvert_\g{p}=\Id_\g{p}$ and take arbitrary elements $X\in\g{k}$ and $P\in\g{p}$. Then $[X,P]=\varphi[X,P]=[\varphi X, P]$, so $\ad(\varphi X- X)\rvert_\g{p}=0$. By the effectiveness of $(G,K)$, we have $\varphi X=X$, and hence $\varphi= \Id$.

It remains to prove that $\Psi$ is onto. Let $A\in \Aut(\cal{F}_{G/K})$. The maximality property implies that $A \Ad(K)\rvert_\g{p} A^{-1}=\Ad(K)\rvert_\g{p}$. Then the effectiveness of $(G,K)$ entails the existence of an automorphism $\phi_A$ of $\g{k}$ defined by $\ad(\phi_A(X))\rvert_\g{p}=A \ad(X)\rvert_\g{p}A^{-1}$, for all $X\in \g{k}$.

Now for each $A\in\Aut(\cal{F}_{G/K})$ we construct an automorphism $\varphi_A\in \Aut(\g{g},\g{k})$ whose restriction to $\g{p}$ is $A$. Define $\varphi_A$ as the linear endomorphism of $\g{g}=\g{k}\oplus\g{p}$ given by $\varphi_A\rvert_\g{k}=\phi_A$ and $\varphi_A\rvert_\g{p}=A$. Clearly, it is a linear isomorphism preserving the Cartan decomposition and with the desired restriction to $\g{p}$. We just have to see that it respects the Lie bracket.

Let $P_1, P_2\in \g{p}$ and $X\in \g{k}$ be arbitrary elements. Denote by $B_\g{k}$ and $B_\g{g}$ the Killing forms of $\g{k}$ and $\g{g}$, respectively. Then, using that $\varphi_A\rvert_\g{k}=\phi_A\in \Aut(\g{k})$, the definition of $\varphi_A$, and the invariance of the trace operator under conjugation, we get
\begin{align*}
B_\g{g}(\varphi_A X,\varphi_A[P_1,P_2]) &= B_\g{k}(\varphi_A X,\varphi_A[P_1,P_2]) + \tr_\g{p} (\ad(\varphi_A X)\ad(\varphi_A[P_1,P_2]))
\\
&=B_\g{k}(X,[P_1,P_2])+\tr_\g{p}(\ad (X)\ad([P_1,P_2]))=B_\g{g}(X,[P_1,P_2]).
\end{align*}
Using this property, the definition of $\varphi_A$, and the fact that $A\in \O(\g{p})$, now we have:
\begin{align*}
B_\g{g}(X,\varphi_A[P_1,P_2])\! &= B_\g{g}(\varphi_A \varphi_A^{-1} X,\varphi_A [P_1,P_2]) = B_\g{g}(\varphi_A^{-1}X,[P_1,P_2]) 
= B_\g{g}(\ad(\varphi_A^{-1}X)P_1,P_2)\\ 
\!&= B_\g{g}(A^{-1} \ad(X) A P_1,P_2) = B_\g{g}(\ad(X) A P_1, A P_2)
= B_\g{g}(X,[\varphi_A P_1,\varphi_A P_2]).
\end{align*}
Since $X\in\g{k}$ is arbitrary, $\varphi_A[P_1,P_2]\in\g{k}$, and $B_\g{g}$ is nondegenerate, we get that $\varphi_A[P_1,P_2]=[\varphi_A P_1,\varphi_A P_2]$, for every $P_1,P_2\in\g{p}$. Furthermore, using the previous properties, we obtain:
\begin{align*}
B_\g{g}(\varphi_A[P_1,X],P_2) &= B_\g{g}([P_1,X],\varphi_A^{-1} P_2)= B_\g{g}(X,[\varphi_A^{-1} P_2,P_1])
= B_\g{g}(\varphi_A X,\varphi_A[\varphi_A^{-1} P_2,P_1])
\\
&=B_\g{g}(\varphi_A X,[P_2,\varphi_A P_1])=B_\g{g}([\varphi_A P_1, \varphi_A X],P_2),
\end{align*}
from where $\varphi_A[P,X]=[\varphi_A P,\varphi_A X]$ for all $P\in\g{p}$ and all $X\in\g{k}$. We conclude that $\varphi_A\in\Aut(\g{g},\g{k})$. 
Therefore, $\Psi$ is onto and the proof is finished.
\end{proof}

By \cite[Ch.~VII, Prop.~4.1]{Lo69} the group $\Aut(\g{g},\g{k})$ is isomorphic to the isotropy group of the base point of $G/K$ in the whole isometry group of $G/K$, and also to the group $\Aut (\g{p})$ of automorphisms of $\g{p}$, that is, the group of linear isomorphisms $A$ of $\g{p}$ such that $A[P_1,[P_2,P_3]]=[AP_1,[AP_2,AP_3]]$, for all $P_1,P_2,P_3\in\g{p}$.

Let us conclude this subsection with the following observation.

\begin{remark} \label{rem:foliation_determines_space}
A homogeneous polar foliation defined by the $s$-representation of a compact symmetric pair satisfying the maximality property determines the corresponding orthogonal symmetric pair (up to a permutation of its irreducible factors). Let us give a quick argument for this claim. It is enough to check it for irreducible symmetric pairs. Let $(G,K)$ and $(G',K')$ be compact irreducible symmetric pairs satisfying the maximality property, and let $\g{g}=\g{k}\oplus\g{p}$ and $\g{g}'=\g{k}'\oplus\g{p}'$ be their Cartan decompositions. If the foliations $\cal{F}_{G/K}$ and $\cal{F}_{G'/K'}$ are congruent, there is an orthogonal map $A$ between $\g{p}$ and $\g{p}'$ that maps leaves of $\mathcal{F}_{G/K}$ to leaves of $\mathcal{F}_{G'/K'}$. Hence $A\Ad(K)\rvert_\g{p}A^{-1}$ acts on $\g{p}'$ with the same orbits as $\Ad(K')\rvert_{\g{p}'}$. The maximality property implies $A\Ad(K)\rvert_\g{p}A^{-1}=\Ad(K')\rvert_{\g{p}'}$, and hence $\g{k}$ and $\g{k'}$ must be isomorphic. Moreover, the ranks of $G/K$ and $G'/K'$ must be equal (so that the codimensions of both foliations agree), as well as the dimensions of both symmetric spaces (so that $\dim\g{p}=\dim\g{p}'$). But one can check (by direct inspection, see~\cite[p.~516--519]{He78}) that these invariants determine the compact irreducible orthogonal symmetric pair $(\g{g},\g{k})$.
\end{remark}

\subsection{The group of automorphisms of an FKM-foliation}\label{subsec:auto_FKM}
Our goal now is to determine the group of automorphisms of the isoparametric foliations in spheres constructed by Ferus, Karcher, and M\"unzner in \cite{FKM81}. We will do this for almost all such examples, with only some exceptions mentioned below. However, these exceptions can be reduced to only two, namely the inhomogeneous FKM-examples whose multiplicities are $(m_1,m_2)=(8,7)$.

Let us begin by reminding the reader about the construction of the FKM-foliations. For details missing here we refer to the original paper \cite{FKM81}. For more information on Clifford algebras and their representations, see \cite[Ch.~I]{LM89}.

Let $\Cl(\cal{E})=\Cl_{m+1}^*$ be the Clifford algebra associated to $\cal{E}=\R^{m+1}$ endowed with the standard positive definite quadratic form. Thus $\Cl(\cal{E})$ can be regarded as the algebra generated by an orthonormal basis $\{E_0,\dots,E_m\}$ of $\cal{E}$ (and the unit $1$) subject to the relations $E_iE_j+E_jE_i=2\delta_{ij} 1$ for every $i,j\in\{0,\dots,m\}$, where $\delta_{ij}$ is the Kronecker delta. Set $V=\R^{2n+2}$ and let $\chi\colon\Cl(\cal{E})\to \End(V)$ be a representation of the Clifford algebra $\Cl(\cal{E})$. Endow $V$ with a positive definite $\Pin(\cal{E})$-invariant inner product $\langle\cdot,\cdot\rangle$. Let us put $P_i=\chi(E_i)$ for $i=0,\dots, m$. Then $(P_0,\dots,P_m)$ is what in \cite{FKM81} is called a \emph{(symmetric) Clifford system}, i.e.\ an $(m+1)$-tuple of symmetric matrices on $V$ which satisfy $P_iP_j+P_jP_i=2\delta_{ij} \Id$ for all $i,j\in\{0,\dots, m\}$. We also define $\cal{P}=\mathrm{span}\{P_0,\dots,P_m\}$ and endow this vector space with the inner product induced by $\chi$, which turns out to be given by $\langle P,P'\rangle=(1/\dim V)\tr(PP')$, for $P,P'\in\cal{P}$.

Assume that $m_2=n-m>0$. Then the FKM-foliation $\cal{F}_\cal{P}$ associated to the Clifford system $(P_0,\dots,P_m)$ is defined by the level sets of $F\rvert_{S(V)}$, where $S(V)$ is the unit sphere of $V$ and $F\colon V\to\R$ is the Cartan-M\"unzner polynomial:
\[
F(x)=\langle x, x\rangle^2-2\sum_{i=0}^m\langle P_i x, x\rangle^2.
\]
The corresponding isoparametric hypersurfaces have $g=4$ principal curvatures with multiplicities $(m_1,m_2)=(m,n-m)$. 
This construction does not depend on the particular matrices $P_0,\dots, P_m$, but only on the unit sphere $S(\cal{P})$ of $\cal{P}$. $S(\cal{P})$ is called the Clifford sphere of the foliation. Moreover, two FKM-foliations are congruent if and only if their Clifford spheres are conjugate under an orthogonal transformation of $V$. 

The two focal submanifolds of an FKM-foliation are never congruent (see~\cite[I, p.~59]{Mu80} for the pair $(1,1)$). Since a focal submanifold determines the whole isoparametric foliation, it follows that every automorphism of $\cal{F}_\cal{P}$ maps each leaf onto itself. 
Hence $A\in \Aut(\cal{F}_\cal{P})$ if and only if $F(Ax)=F(x)$ for all $x\in V$, where $F$ is the Cartan-M\"unzner polynomial of $\cal{F}_\cal{P}$. This means that the Clifford systems $(P_0,\dots, P_m)$ and $(A^{-1}P_0 A,\dots, A^{-1}P_m A)$ define the same foliation. Therefore $A\in\Aut(\cal{F}_\cal{P})$ if and only if $\cal{F}_\cal{P}=\cal{F}_{A^{-1}\cal{P}A}$.

Let $\SO(\cal{P})\cup\O^-(\cal{P})$ and $\Spin(\cal{P})\cup\Pin^-(\cal{P})$ be the decompositions of the orthogonal group $\O(\cal{P})$ and of the pin group $\Pin(\cal{P})$ in connected components, respectively.
We define the following subsets of the orthogonal group of $V$:
\begin{align*}
\U^+(\cal{P})&=\{U\in\O(V):PU=UP\text{ for all }P\in\cal{P}\}, 
\\
\U^-(\cal{P})&=\{U\in\O(V):PU=-UP\text{ for all }P\in\cal{P}\},
\end{align*}
and $\U^\pm(\cal{P})=\U^+(\cal{P})\cup\U^-(\cal{P})$. The set $\U^-(\cal{P})$ might be empty. Elements in $\U^+(\cal{P})$ commute with those in $\Pin(\cal{P})$, while elements in $\U^-(\cal{P})$ commute with the ones in $\Spin(\cal{P})$ and anticommute those in $\Pin^-(\cal{P})$. 
Moreover, $\Pin(\cal{P})$ and $\U^\pm(\cal{P})$ are subgroups of $\Aut(\cal{F}_\cal{P})$.

An important remark for our work is that if $m_1\leq m_2$, then the FKM-foliation determines the Clifford sphere $S(\cal{P})$, or equivalently, the space $\cal{P}$ (see \cite[\S4.6]{FKM81}). This observation allows us to show the following structure result for $\Aut(\cal{F}_\cal{P})$.

\begin{theorem}\label{th:auto_FKM_1}
Let $\cal{F}_{\cal{P}}$ be an FKM-foliation satisfying $m_1\leq m_2$. We have:
\begin{itemize}
\item[(i)] If $m$ is odd, then $\Aut(\cal{F}_\cal{P})\cong \Pin(\cal{P})\cdot \U^+(\cal{P})$.
\item[(ii)] If $m$ is even, then $\Aut(\cal{F}_\cal{P})\cong \Spin(\cal{P})\cdot \U^\pm(\cal{P})$.
\end{itemize}
In both cases $\Aut(\cal{F}_\cal{P})$ is isomorphic to a direct product modulo the center $Z(\Spin(\cal{P}))$ of $\Spin(\cal{P})$, which is $\{\pm\Id,\pm P_0\cdots P_m\}\cong \mathbb{Z}_4$ if $m\equiv 1\,(\mod 4)$; $\{\pm\Id,\pm P_0\cdots P_m\}\cong \mathbb{Z}_2\times\mathbb{Z}_2$ if $m\equiv 3\,(\mod 4)$; and $\{\pm\Id\}\cong\mathbb{Z}_2$ if $m$ is even.
\end{theorem}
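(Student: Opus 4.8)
The plan is to reduce the statement to a purely group-theoretic question about the single subspace $\cal{P}\subset\mathrm{Sym}(V)$ and then to analyze that question through the conjugation action on $\cal{P}$.

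\emph{Step 1 (reduction to a normalizer).} Recall from the preceding discussion that $A\in\Aut(\cal{F}_\cal{P})$ if and only if $\cal{F}_\cal{P}=\cal{F}_{A^{-1}\cal{P}A}$, and that the hypothesis $m_1\leq m_2$ guarantees that the FKM-foliation determines its Clifford sphere $S(\cal{P})$, equivalently the space $\cal{P}$. Hence $\cal{F}_\cal{P}=\cal{F}_{A^{-1}\cal{P}A}$ forces $A^{-1}\cal{P}A=\cal{P}$, while this equality trivially implies the converse; thus $\Aut(\cal{F}_\cal{P})$ is exactly the normalizer $N_{\O(V)}(\cal{P})=\{A\in\O(V):A^{-1}\cal{P}A=\cal{P}\}$. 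This is the only point where $m_1\leq m_2$ is used, and it is essential: if $\cal{P}$ is not recoverable from $\cal{F}_\cal{P}$ the whole approach breaks down (which is why the $(8,7)$-examples remain out of reach).

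\emph{Step 2 (the conjugation homomorphism).} Conjugation gives a homomorphism $\rho\colon N_{\O(V)}(\cal{P})\to\mathrm{GL}(\cal{P})$, $\rho(A)\colon P\mapsto APA^{-1}$, which preserves the trace inner product $\langle P,P'\rangle=(1/\dim V)\tr(PP')$ and hence lands in $\O(\cal{P})$. Its kernel is the set of elements commuting with all of $\cal{P}$, namely $\U^+(\cal{P})$. To pin down the image I would first evaluate $\rho$ on $\Pin(\cal{P})$: for a unit vector $P\in S(\cal{P})$ the Clifford relation $PQ+QP=2\langle P,Q\rangle\Id$ gives $\rho(P)(Q)=PQP=2\langle P,Q\rangle P-Q$, i.e.\ $\rho(P)=-R_P$, minus the reflection of $\cal{P}$ across $P^\perp$. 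Since $S(\cal{P})$ generates $\Pin(\cal{P})$, the products $\rho(P)\rho(P')=R_PR_{P'}$ show $\rho(\Spin(\cal{P}))=\SO(\cal{P})$, while $\rho$ maps $\Pin^-(\cal{P})$ into $(-\Id_\cal{P})\,\O^-(\cal{P})$, which equals $\O^-(\cal{P})$ if $m$ is odd (because $\det(-\Id_\cal{P})=(-1)^{m+1}=1$) and $\SO(\cal{P})$ if $m$ is even. Thus $\rho(\Pin(\cal{P}))=\O(\cal{P})$ for $m$ odd and $\rho(\Pin(\cal{P}))=\SO(\cal{P})$ for $m$ even. In the even case every orientation-reversing element of $\mathrm{image}(\rho)$ must come from $\U^-(\cal{P})$: each $U\in\U^-(\cal{P})$ has $\rho(U)=-\Id_\cal{P}\in\O^-(\cal{P})$, and conversely, if some $A$ has $\rho(A)$ orientation-reversing then $-\Id_\cal{P}\in\langle\SO(\cal{P}),\rho(A)\rangle\subset\mathrm{image}(\rho)$, so $\U^-(\cal{P})\neq\emptyset$; hence $\rho$ is onto $\O(\cal{P})$ exactly when $\U^-(\cal{P})$ is nonempty and onto $\SO(\cal{P})$ otherwise.

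\emph{Step 3 (assembling the group).} Taking $\rho$-preimages and using $\ker\rho=\U^+(\cal{P})$ yields $\Aut(\cal{F}_\cal{P})=\Pin(\cal{P})\cdot\U^+(\cal{P})$ for $m$ odd and $\Aut(\cal{F}_\cal{P})=\Spin(\cal{P})\cdot\U^\pm(\cal{P})$ for $m$ even (for $m$ odd one checks $\U^-(\cal{P})\subset\Pin(\cal{P})\cdot\U^+(\cal{P})$ by dividing out the volume element $P_0\cdots P_m$, so no $\U^-$-factor is needed in the display). To recast the internal product as the stated ``direct product modulo $Z(\Spin(\cal{P}))$'' I would identify the kernel of the multiplication map from the external product of the $\Pin$- (resp.\ $\Spin$-) factor with $\U^\pm(\cal{P})$ onto $\Aut(\cal{F}_\cal{P})$; using the stated commutation relations this map is multiplicative up to a sign coming from $\U^-(\cal{P})$ anticommuting with $\Pin^-(\cal{P})$, and its kernel is the antidiagonal copy of the intersection of the two factors. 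That intersection is the image in $\O(V)$ of the center of $\Cl_{m+1}$: it is $\{\pm\Id\}$ when $m$ is even, and $\{\pm\Id,\pm P_0\cdots P_m\}$ when $m$ is odd (the volume element being even, hence in $\Spin(\cal{P})$); since $(P_0\cdots P_m)^2=(-1)^{m(m+1)/2}\Id$ this is $\mathbb{Z}_4$ for $m\equiv 1\,(\mod 4)$ and $\mathbb{Z}_2\times\mathbb{Z}_2$ for $m\equiv 3\,(\mod 4)$, and in every case it equals $Z(\Spin(\cal{P}))$.

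\emph{Main obstacle.} The conceptual skeleton above is short once Step 1 is available; the real work is the Clifford-module bookkeeping packed into Steps 2--3. Deciding whether $\U^-(\cal{P})$ is empty, and more generally understanding $\U^+(\cal{P})$ (which is what will later make the counting of noncongruent projections possible), requires the classification of $\Cl^*_{m+1}$-modules --- Bott periodicity and the splitting into types $\g{d}_+,\g{d}_-$ for $m\equiv 0\,(\mod 4)$ --- and correctly tracking the central elements $\pm\Id$ and $\pm P_0\cdots P_m$ through the identification with $Z(\Spin(\cal{P}))$ is precisely where the residue $m\bmod 4$ (and, downstream, $m\bmod 8$) enters. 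I expect this center computation, together with the sign subtlety in the multiplication map, to be the most delicate part.
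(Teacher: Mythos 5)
Your strategy is essentially the paper's own: use $m_1\leq m_2$ to identify $\Aut(\cal{F}_\cal{P})$ with the normalizer of $\cal{P}$ in $\O(V)$, then combine the conjugation homomorphism onto $\O(\cal{P})$ (whose kernel is $\U^+(\cal{P})$) with the surjectivity of the adjoint representation of $\Pin(\cal{P})$, resp.\ $\Spin(\cal{P})$, to obtain the two displayed decompositions. Your Steps 1 and 2 and the first half of Step 3 are correct and in fact spell out details that the paper only cites.

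The gap is in your identification of the kernel of the multiplication map when $m$ is odd. You claim $\Pin(\cal{P})\cap\U^+(\cal{P})=\{\pm\Id,\pm P_0\cdots P_m\}$ and describe this as the image of the center of $\Cl^*_{m+1}$. But when $m$ is odd the volume element $\omega=P_0\cdots P_m$ \emph{anticommutes} with each $P_i$ (pushing $P_i$ through the $m+1$ factors of $\omega$ produces the sign $(-1)^m=-1$); this is precisely the fact you invoke a few lines earlier when you divide elements of $\U^-(\cal{P})$ by $\omega$ to land in $\U^+(\cal{P})$. Hence $\omega\in\U^-(\cal{P})$, so $\omega\notin\U^+(\cal{P})$, and $\Pin(\cal{P})\cap\U^+(\cal{P})=\{\pm\Id\}\cong\mathbb{Z}_2$, strictly smaller than $Z(\Spin(\cal{P}))$ for $m$ odd. (Your appeal to the center of $\Cl^*_{m+1}$ also has the parity reversed: that center is $\R\cdot 1$ when $m+1$ is even, and it contains $\omega$ only when $m+1$ is odd, in which case $\omega$ is an odd element and so lies outside $\Spin(\cal{P})$.) You should be aware that the paper's own proof asserts the identical equality $\Pin(\cal{P})\cap\U^+(\cal{P})=Z(\Spin(\cal{P}))$, so on this point your proposal reproduces, rather than repairs, the published step; as your argument stands, the ``direct product modulo $Z(\Spin(\cal{P}))$'' clause is only established for $m$ even, where $\Spin(\cal{P})\cap\U^\pm(\cal{P})=\{\pm\Id\}=Z(\Spin(\cal{P}))$ does hold. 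A minor further point: no sign correction is needed to make the multiplication map a homomorphism, since $\U^+(\cal{P})$ commutes with all of $\Pin(\cal{P})$ and $\U^\pm(\cal{P})$ commutes with all of $\Spin(\cal{P})$; the anticommutation of $\U^-(\cal{P})$ with $\Pin^-(\cal{P})$ never enters either display.
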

\begin{proof}
We saw that $A\in \Aut(\cal{F}_\cal{P})$ if and only if $\cal{F}_\cal{P}=\cal{F}_{A^{-1}\cal{P}A}$. If $m_1\leq m_2$, the previous condition is equivalent to $\cal{P}=A\cal{P}A^{-1}$. 

Assume that $m$ is odd. Under this assumption, the adjoint representation $\Ad\colon \Pin(\cal{P})\to\O(\cal{P})$ is onto and $\Ad(\Pin^-(\cal{P}))=\O^-(\cal{P})$. Consider the group homomorphism $\Psi\colon \Pin(\cal{P})\times \U^+(\cal{P})\to\Aut(\cal{F}_\cal{P})$, $(Q,U)\mapsto QU$. Its kernel is isomorphic to $\Pin(\cal{P})\cap\U^+(\cal{P})$, which is exactly $Z(\Spin(\cal{P}))$ (note that $\Pin^-(\cal{P})\cap\U^+(\cal{P})=\emptyset$). We show that $\Psi$ is onto. Let $A\in\Aut(\cal{F}_\cal{P})$. Then $\varphi_A\colon \cal{P}\to\cal{P}$, $P\mapsto APA^{-1}$, is an orthogonal transformation of $\cal{P}$. Since the  adjoint representation is onto, we can find a $Q\in\Pin(\cal{P})$ such that $\Ad(Q)=\varphi_A$, that is, $QP_iQ^{-1}=AP_iA^{-1}$ for all $i=0,\dots, m$. Hence $U=Q^{-1}A\in\U^+(\cal{P})$ and then $\Psi(Q,U)=A$. Thus we have proved (i).

Let $m$ be even. Consider the group homomorphism $\Psi\colon \Spin(\cal{P})\times \U^\pm(\cal{P})\to\Aut(\cal{F}_\cal{P})$, $(Q,U)\mapsto QU$. Its kernel is isomorphic to $\Spin(\cal{P})\cap\U^\pm(\cal{P})$. Since $m+1$ is odd, then $-\Id\in \O^-(\cal{P})$, so $\Spin(\cal{P})\cap\U^-(\cal{P})=\emptyset$, and $\Spin(\cal{P})\cap\U^\pm(\cal{P})=Z(\Spin(\cal{P}))$. Now if $A\in\Aut(\cal{F}_\cal{P})$, then $\varphi_A\colon \cal{P}\to\cal{P}$, $P\mapsto APA^{-1}$, is an orthogonal transformation of $\cal{P}$. On the one hand, if $\varphi_A\in\SO(\cal{P})$, there exists a $Q\in\Spin(\cal{P})$ such that $\Ad(Q)=\varphi_A$, and $\Psi(Q,U)=A$, where $U=Q^{-1}A\in\U^+(\cal{P})$. On the other hand, if $\varphi_A\in\O^-(\cal{P})$, since $m+1$ is odd, then $-\varphi_A\in\SO(\cal{P})$, so there is a $Q\in\Spin(\cal{P})$ so that $\Ad(Q)=-\varphi_A$, and hence $\Psi(Q,U)=A$, where $U=Q^{-1}A\in\U^-(\cal{P})$. This proves (ii).

Finally, the assertions involving $Z(\Spin(\cal{P}))$ are well-known (see \cite[Th.~VII.7.5]{Si96}).
\end{proof}

Up to congruence, there are only $8$ FKM-foliations for which $m_1>m_2$. These are the ones with multiplicities $(m_1,m_2)$ equal to $(2,1)$, $(4,3)$ (two noncongruent examples), $(5,2)$, $(6,1)$, $(8,7)$ (two noncongruent examples), and $(9,6)$. However, on the one hand, the FKM-foliations with pairs $(2,1)$, $(6,1)$, and $(5,2)$ are congruent to those FKM-foliations with pairs $(1,2)$, $(1,6)$, and $(2,5)$, respectively; and one of the examples with pair $(4,3)$ is congruent to the FKM-foliation with pair $(3,4)$. On the other hand, the other example with multiplicities $(4,3)$ is homogeneous, as well as the FKM-foliation with pair $(9,6)$. Therefore, in our investigation of FKM-foliations we are putting aside only the two inhomogeneous FKM-foliations with pair $(m_1,m_2)=(8,7)$.

Our purpose now is to calculate $\U^+(\cal{P})$. To do this, we need first to recall some facts about representations of the Clifford algebras $\Cl^*_{m+1}$. See \cite[Ch.~I]{LM89} for details.

Each Clifford algebra $\Cl^*_{m+1}$ is a matrix algebra over some associative division algebra: $\R$, $\C$, or $\H$. We state the classification of low-dimensional Clifford algebras in Table~\ref{table:clifford}, where $\mathbb{K}(r)$ denotes the algebra of $(r\times r)$-matrices over $\mathbb{K}=\R,\C,\H$. The higher-dimensional Clifford algebras $\Cl^*_{m+1}$ can be obtained recursively by means of $\Cl^*_{m+8}=\Cl^*_{m}\otimes\R(16)$.
\begin{table}[h!]
\renewcommand{\arraystretch}{1.5}
\begin{tabular}{cccccccccc} \hline 
$m$        & $0$            & $1$          & $2$     & $3$     & $4$     & $5$                & $6$     & $7$     & $\dots$
\\
\hline
$\Cl^*_{m+1}$ & $\R\oplus\R$ & $\R(2)$ & $\C(2)$ & $\H(2)$ & $\H(2)\oplus\H(2)$ & $\H(4)$ & $\C(8)$ & $\R(16)$ & $\dots$
\\
\hline
\end{tabular}
\vspace{2ex}
\caption{Classification of the Clifford algebras $\Cl^*_{m+1}$}
\label{table:clifford}
\end{table}
The classification of $\Cl^*_{m+1}$-modules is obtained directly from the classification of the corresponding Clifford algebras. The algebra $\mathbb{K}(r)$ has only one equivalence class $\g{d}$ of irreducible representations (on $\mathbb{K}^r$), whereas the algebra $\mathbb{K}(r)\oplus\mathbb{K}(r)$ has exactly two equivalence classes of irreducible representations $\g{d}_+$, $\g{d}_-$ (both on $\mathbb{K}^r$), given by the projections onto each one of the factors. 
If $\Cl^*_{m+1}=\mathbb{K}(r)$, we will let $\chi\colon \Cl^*_{m+1}\to \End(\g{d})$ be the corresponding irreducible representation, and if $\Cl^*_{m+1}=\mathbb{K}(r)\oplus\mathbb{K}(r)$, the irreducible representations will be denoted by $\chi_+\colon \Cl^*_{m+1}\to\End(\g{d}_+)$ and $\chi_-\colon \Cl^*_{m+1}\to\End(\g{d}_-)$.
Therefore, if $m\not\equiv 0\, (\mod 4)$, each $\Cl^*_{m+1}$-module $V$ is isomorphic to $\oplus_{i=1}^k\g{d}$ for certain positive integer $k$, while if $m\equiv 0\, (\mod 4)$ each $\Cl^*_{m+1}$-module $V$ is isomorphic to $(\oplus_{i=1}^{k_+}\g{d}_+)\oplus(\oplus_{i=1}^{k_-}\g{d}_-)$ for nonnegative integers $k_-$, $k_+$; we will write $k=k_++k_->0$. The corresponding Clifford algebra representations will be denoted by $\chi_k$ and $\chi_{k_+,k_-}$, respectively. Furthermore, in the case $m\equiv 0\, (\mod 4)$ we can and will assume that $\chi_-=\chi_+\circ\alpha$, where $\alpha$ is the canonical involution of $\Cl^*_{m+1}=\Cl(\cal{E})$ that extends the map $-\Id$ on $\cal{E}$.

\begin{theorem}\label{th:U+}
Let $\cal{P}$ be a symmetric Clifford system. The group $\U^+(\cal{P})$ is isomorphic to 
\[
\begin{array}{rl@{\qquad\qquad}rl}
\O(k), \qquad & \text{if } m\equiv 1,7\,(\mod 8), 
\\
\U(k), \qquad & \text{if } m\equiv 2,6\,(\mod 8), & \O(k_+)\times \O(k_-), \qquad & \text{if } m\equiv 0\,(\mod 8),
\\
\Sp(k), \qquad & \text{if } m\equiv 3,5\,(\mod 8), & \Sp(k_+)\times \Sp(k_-), \qquad & \text{if } m\equiv 4\,(\mod 8).
\end{array}
\]
\end{theorem}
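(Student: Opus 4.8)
The plan is to compute $\U^+(\cal{P})$ directly from its definition as the commutant of the Clifford system inside $\O(V)$. First I would observe that $\U^+(\cal{P})$ consists precisely of those orthogonal transformations of $V$ that commute with $\chi(E_i)=P_i$ for all $i$, hence with every element of the image $\chi(\Cl^*_{m+1})$; in other words, $\U^+(\cal{P})$ is the group of \emph{orthogonal} $\Cl^*_{m+1}$-module automorphisms of $V$. By Schur's lemma, the algebra $\End_{\Cl^*_{m+1}}(V)$ of module endomorphisms is a matrix algebra over the commuting division algebra $\mathbb{K}_m\in\{\R,\C,\H\}$ determined by Table~\ref{table:clifford}: it is $\mathbb{K}_m(k)$ when $m\not\equiv 0\,(\mod 4)$ (since $V\cong\oplus_{i=1}^k\g{d}$ with $\g{d}$ irreducible and $\End_{\Cl^*_{m+1}}(\g{d})=\mathbb{K}_m$), and $\mathbb{K}_m(k_+)\oplus\mathbb{K}_m(k_-)$ when $m\equiv 0\,(\mod 4)$, because $\g{d}_+$ and $\g{d}_-$ are inequivalent, so no module map mixes the two isotypic components. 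Here $\mathbb{K}_m=\R$ for $m\equiv 0,6,7\,(\mod 8)$, $\mathbb{K}_m=\C$ for $m\equiv 1,5\,(\mod 8)$ wait — I must read off the commuting field correctly from the table, so the next step is precisely to pin down $\mathbb{K}_m$ in each residue class mod $8$ and match it against the eight cases in the statement.

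Concretely, from Table~\ref{table:clifford} together with the periodicity $\Cl^*_{m+8}=\Cl^*_m\otimes\R(16)$ (which does not change the commuting division algebra), one reads: $\Cl^*_{m+1}$ is a matrix algebra over $\R$ for $m\equiv 1,7\,(\mod 8)$ and over $\R\oplus\R$ for $m\equiv 0\,(\mod 8)$; over $\C$ for $m\equiv 2,6\,(\mod 8)$; over $\H$ for $m\equiv 3,5\,(\mod 8)$ and over $\H\oplus\H$ for $m\equiv 4\,(\mod 8)$. Thus $\End_{\Cl^*_{m+1}}(V)$ is $\R(k)$, $\C(k)$, $\H(k)$ in the respective non-$(\mod 4)\!\equiv\!0$ cases, and $\R(k_+)\oplus\R(k_-)$, resp.\ $\H(k_+)\oplus\H(k_-)$, in the cases $m\equiv 0,4\,(\mod 8)$. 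Now $\U^+(\cal{P})$ is the subgroup of this algebra preserving the $\Pin(\cal{E})$-invariant inner product $\langle\cdot,\cdot\rangle$ on $V$. The standard fact (which I would invoke from \cite[Ch.~I]{LM89}) is that the invariant inner product on an isotypic $\Cl^*_{m+1}$-module is compatible with the $\mathbb{K}_m$-structure, i.e.\ it is the real part of a positive definite $\mathbb{K}_m$-Hermitian (or $\mathbb{K}_m$-bilinear symmetric, when $\mathbb{K}_m=\R$) form; hence the orthogonal automorphisms commuting with the module structure form exactly the classical group $\O(k)$, $\U(k)$, or $\Sp(k)$ over $\mathbb{K}_m$, and split as a product over the two isotypic pieces when $m\equiv 0,4\,(\mod 8)$. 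This yields the table in the statement.

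The main obstacle I anticipate is the last point: verifying that the $\Pin(\cal{E})$-invariant inner product is genuinely $\mathbb{K}_m$-sesquilinear (not merely $\R$-bilinear commuting with the $\mathbb{K}_m$-action), so that the commutant in $\O(V)$ is the \emph{compact} form $\O(k)/\U(k)/\Sp(k)$ rather than a noncompact group like $\mathrm{GL}(k,\R)\cap\O(V)$ with indefinite signature. The clean way is to use that on each irreducible summand $\g{d}$ (resp.\ $\g{d}_\pm$) the invariant inner product is unique up to a positive scalar and the multiplication by a generator $j$ or $k$ of $\mathbb{K}_m$ is skew-symmetric with respect to it (because $j^2=-1$ forces $j\in\mathfrak{so}$ for a Euclidean product commuting with $j$); then on $V\cong\mathbb{K}_m^k$ one gets a positive definite Hermitian form whose isometry group, intersected with the module commutant, is the asserted compact classical group. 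With this in hand, combining with the algebra computation above and keeping track of which residues produce the $\oplus$-splitting completes the proof.
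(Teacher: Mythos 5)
Your proposal is correct and follows essentially the same route as the paper: identify $\U^+(\cal{P})$ with the orthogonal elements of the commutant of $\chi(\Cl^*_{m+1})$ in $\End(V)$, compute that commutant via Schur's lemma as $\mathbb{K}(k)$ (or $\mathbb{K}(k_+)\oplus\mathbb{K}(k_-)$ when $m\equiv 0\,(\mod 4)$, since $\g{d}_+$ and $\g{d}_-$ are inequivalent), read $\mathbb{K}$ off the classification of $\Cl^*_{m+1}$, and intersect with $\O(V)$. The only difference is that you take extra care to justify why this intersection is the compact classical group $\O(k)$, $\U(k)$, or $\Sp(k)$ --- a point the paper dispatches with the identities $\R(k)\cap\O(k)=\O(k)$, $\C(k)\cap\O(2k)=\U(k)$, $\H(k)\cap\O(4k)=\Sp(k)$.
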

\begin{proof}
First, assume that $m\not\equiv 0\,(\mod 4)$. The real endomorphisms $U$ of $V=\oplus_{i=1}^k\g{d}$ can be identified with matrices $(U_{ij})$ with $U_{ij}\in\End(\g{d})$ for $i,j=1,\dots,k$. The endomorphisms $U=(U_{ij})$ that commute with the elements in $\Cl(\cal{P})=\chi_k(\Cl^*_{m+1})$ are exactly those whose $U_{ij}$ commute with the elements in $\chi(\Cl^*_{m+1})=\mathbb{K}(r)$. Equivalently, the $U_{ij}$ belong to the commuting subalgebra of $\mathbb{K}(r)$, which is isomorphic to $\mathbb{K}$. Hence the algebra of endomorphisms $U$ that commute with $\Cl(\cal{P})$ is isomorphic to $\mathbb{K}(k)$. Now $\U^+(\cal{P})$ is the set of those endomorphisms $U$ commuting with $\Cl(\cal{P})$ that are orthogonal transformations of $V$. Since $\R(k)\cap\O(k)=\O(k)$, $\C(k)\cap\O(2k)=\U(k)$, and $\H(k)\cap\O(4k)=\Sp(k)$, we get that $\U^+(\cal{P})$ is isomorphic to $\O(k)$ if $m\equiv 1,7\,(\mod 8)$, $\U(k)$ if $m\equiv 2,6\,(\mod 8)$, or $\Sp(k)$ if $m\equiv 3,5\,(\mod 8)$.

Let $m\equiv 0\,(\mod 4)$ and put $V=(\oplus_{i=1}^{k_+}\g{d}_+)\oplus(\oplus_{i=1}^{k_-}\g{d}_-)$. 
Arguing as above, one can show that the algebra of endomorphisms $U=(U_{ij})$ that commute with the elements of $\chi_{k_+,k_-}(\Cl^*_{m+1})$ is isomorphic to $\mathbb{K}(k_+)\oplus\mathbb{K}(k_-)$; note that if, for example, $i\in\{1,\dots,k_+\}$ and $j\in\{k_++1,\dots, k_++k_-\}$, then $U_{ij}\chi_-(f)=\chi_+(f)U_{ij}$ for all $f\in \Cl^*_{m+1}$ if and only if $U_{ij}=0$, since $\chi_+$ and $\chi_-$ are inequivalent representations. Restricting to orthogonal transformations of $V$, one readily finishes the proof.
\end{proof}

Let $\{e_1,\dots, e_k\}$ be the canonical $\mathbb{K}$-basis of $\mathbb{K}^k$, for $\mathbb{K}\in\{\R,\C,\H\}$. Let us regard $\g{d}$, $\g{d}^\pm$, and $\mathbb{K}^k$ as right vector spaces, in order to deal also with the quaternionic case. Assume, for example, that $m$ is odd, and let $\wt{\U}^+(\cal{P})$ be the corresponding classical group in Theorem~\ref{th:U+}. Therefore, Theorems~\ref{th:auto_FKM_1} and~\ref{th:U+} establish the following isomorphism of groups
\[
\Pin(\cal{P})\cdot\U^+(\cal{P})  \to  \Pin(m+1)\cdot \wt{\U}^+(\cal{P}),\quad QU\mapsto\wt{Q}\otimes\wt{U},
\]
where $\wt{Q}$ and $\wt{U}$ are defined as follows: for each $Q\in \Pin(\cal{P})$, let $f\in \Pin(\cal{E})$ so that $Q=\chi_k(f)$, and define $\wt{Q}=\chi(f)$; given $U\in\U^+(\cal{P})$, put $U=(U_{ij})$ with $U_{ij}\in\End_\R(\g{d})$, and define $\wt{U}=(\wt{u}_{ij})$, where $\wt{u}_{ij}\in\mathbb{K}$ and $U_{ij}v=v\wt{u}_{ij}$ for all $v\in\g{d}$ and each $i,j=1,\dots, k$. Moreover, it is straightforward to check that the map $\bigoplus_{i=1}^k\g{d} \to  \g{d}\otimes_\mathbb{K}\mathbb{K}^k$, $(v_1,\dots,v_k)\mapsto\sum_{i=1}^k v_i\otimes e_i$,
gives an equivalence between the representation of $\Aut(\cal{F}_\cal{P})\cong \Pin(\cal{P})\cdot\U^+(\cal{P})$ on $V=\bigoplus_{i=1}^k\g{d}$ and the tensor product representation of $\Pin(m+1)\cdot \wt{\U}^+(\cal{P})$ on $\g{d}\otimes_\mathbb{K}\mathbb{K}^k$. If $m$ is even, the previous argument applies (with small changes) to the subgroup $\Spin(\cal{P})\cdot\U^+(\cal{P})$ of $\Aut(\cal{F}_\cal{P})$. Thus, we obtain:

\begin{theorem}\label{th:auto_FKM_2}
Let $\cal{F}_{\cal{P}}$ be an FKM-foliation satisfying $m_1\leq m_2$. 

If $m$ is odd, the representation of the group $\Aut(\cal{F}_\cal{P})$ on $V$ is equivalent to the action of 
\begin{align*}
\Pin(m+1)\cdot \O(k), \qquad & \text{if } m\equiv 1,7\,(\mod 8), \quad \text{or}
\\
\Pin(m+1)\cdot \Sp(k), \qquad & \text{if } m\equiv 3,5\,(\mod 8),
\end{align*}
on $\g{d}\otimes_\mathbb{K}\mathbb{K}^k$, given by the  pin representation $\g{d}$ on the left factor, and by the standard representation of $\O(k)$ or  $\Sp(k)$ on the right factor $\mathbb{R}^k$ or $\H^k$, respectively.

If $m\equiv 2\,(\mod 4)$, the representation of the subgroup $\Spin(\cal{P})\cdot\U^+(\cal{P})$ of $\Aut(\cal{F}_\cal{P})$ on $V$ is equivalent to the action of
$\Spin(m+1)\cdot \U(k)$ on $\g{d}\otimes_\mathbb{C}\mathbb{C}^k$, given by the spin representation $\g{d}$ on the left factor, and by the standard representation of $\U(k)$ on the right factor $\mathbb{C}^k$.

In case $m\equiv 0\,(\mod 4)$, the representation of the subgroup $\Spin(\cal{P})\cdot\U^+(\cal{P})$ of $\Aut(\cal{F}_\cal{P})$ on $V$ is equivalent to the action of
\begin{align*}
\Spin(m+1)\cdot(\O(k_+)\times \O(k_-)), \qquad & \text{if } m\equiv 0\,(\mod 8), \quad \text{or}
\\
\Spin(m+1)\cdot(\Sp(k_+)\times \Sp(k_-)), \qquad & \text{if } m\equiv 4\,(\mod 8),
\end{align*}
on $(\g{d}_+\otimes_\mathbb{K}\mathbb{K}^{k_+})\oplus(\g{d}_-\otimes_\mathbb{K}\mathbb{K}^{k_-})$, given by the spin representations $\g{d}_+$, $\g{d}_-$ on the left factors, and by the standard representations of $\O(k_{\pm})$ or $\Sp(k_{\pm})$ on the right factors $\mathbb{R}^{k_\pm}$ or $\mathbb{H}^{k_\pm}$. 
\end{theorem}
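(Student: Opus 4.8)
The plan is to upgrade the structural descriptions obtained in Theorems~\ref{th:auto_FKM_1} and~\ref{th:U+} to a description of the \emph{representation} of $\Aut(\cal{F}_\cal{P})$ on $V$, by making precise the tensor-product picture sketched just above the statement. First I fix conventions over the division algebra $\mathbb{K}\in\{\R,\C,\H\}$ attached to $\Cl^*_{m+1}$: I regard the irreducible module $\g{d}$ (resp.\ $\g{d}_\pm$) as a \emph{right} $\mathbb{K}$-vector space, chosen so that the commutant of $\chi(\Cl^*_{m+1})$ in $\End_\R(\g{d})$ is exactly the algebra of right multiplications by $\mathbb{K}$; this is precisely the content of the classification of $\Cl^*_{m+1}$-modules recalled before Theorem~\ref{th:U+}. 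Then $\mathbb{K}^k$ is taken as the standard \emph{left} $\mathbb{K}$-module, and the map $\bigoplus_{i=1}^k\g{d}\to\g{d}\otimes_\mathbb{K}\mathbb{K}^k$, $(v_1,\dots,v_k)\mapsto\sum_i v_i\otimes e_i$, is a well-defined $\R$-linear isomorphism; I transport the inner product of $V$ to the product of a fixed $\Pin(m+1)$-invariant inner product on $\g{d}$ and the real part of the standard $\mathbb{K}$-Hermitian form on $\mathbb{K}^k$, which agree up to a positive scalar.

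Next I identify the two factor actions. By Theorem~\ref{th:auto_FKM_1}, $\Aut(\cal{F}_\cal{P})=\Pin(\cal{P})\cdot\U^+(\cal{P})$ when $m$ is odd, and it contains $\Spin(\cal{P})\cdot\U^+(\cal{P})$ when $m$ is even. An element $Q=\chi_k(f)$ with $f\in\Pin(m+1)$ (resp.\ $\Spin(m+1)$) acts diagonally on $V=\g{d}^k$, and since $\chi(f)$ commutes with the right $\mathbb{K}$-action, under the identification this is exactly $\chi(f)\otimes\Id_{\mathbb{K}^k}$, i.e.\ the pin (resp.\ spin) representation on the left factor. An element $U\in\U^+(\cal{P})$, written as a block matrix $(U_{ij})$ with $U_{ij}\in\End_\R(\g{d})$, commutes with $\cal{P}$, so each $U_{ij}$ lies in the commutant of $\chi(\Cl^*_{m+1})$ and is therefore right multiplication by a scalar $\wt{u}_{ij}\in\mathbb{K}$; thus $U$ corresponds to $\Id_\g{d}\otimes\wt{U}$, where $\wt{U}=(\wt{u}_{ij})$ acts on $\mathbb{K}^k$ by left matrix multiplication. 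The orthogonality computations already made in the proof of Theorem~\ref{th:U+} ($\R(k)\cap\O(k)=\O(k)$, $\C(k)\cap\O(2k)=\U(k)$, $\H(k)\cap\O(4k)=\Sp(k)$) show that $U\mapsto\wt{U}$ maps $\U^+(\cal{P})$ isomorphically onto the classical group $\wt{\U}^+(\cal{P})$ of Theorem~\ref{th:U+}, acting on $\mathbb{K}^k$ by its standard representation. Since $\Pin(\cal{P})$ commutes with $\U^+(\cal{P})$, the representation of $\Pin(\cal{P})\cdot\U^+(\cal{P})$ (resp.\ $\Spin(\cal{P})\cdot\U^+(\cal{P})$) is the external tensor product of the pin (spin) representation on $\g{d}$ and the standard representation on $\mathbb{K}^k$; this is the assertion for $m$ odd, and for the indicated subgroup when $m\equiv2\pmod4$.

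For $m\equiv0\pmod4$ I argue in the same way after the preliminary observation that, because $\chi_+$ and $\chi_-$ are inequivalent, every $U\in\U^+(\cal{P})$ preserves each of the summands $\bigoplus_i\g{d}_+$ and $\bigoplus_i\g{d}_-$ of $V$ and is hence block-diagonal; applying the previous analysis to each summand (with $\mathbb{K}=\R$ if $m\equiv0\pmod8$ and $\mathbb{K}=\H$ if $m\equiv4\pmod8$, and noting that $\alpha$ is trivial on $\Cl^0\supset\Spin(m+1)$, so that the two restrictions $\g{d}_\pm|_{\Spin(m+1)}$ coincide and $\Spin(m+1)$ embeds diagonally in $\Spin(\cal{P})$) yields the action of $\Spin(m+1)\cdot(\O(k_+)\times\O(k_-))$, respectively $\Spin(m+1)\cdot(\Sp(k_+)\times\Sp(k_-))$, on $(\g{d}_+\otimes_\mathbb{K}\mathbb{K}^{k_+})\oplus(\g{d}_-\otimes_\mathbb{K}\mathbb{K}^{k_-})$.

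I expect the one genuinely delicate point to be the linear algebra over $\mathbb{K}$ in the quaternionic cases: one must keep $\g{d}$ a right $\H$-module and $\H^k$ a left $\H$-module so that $\g{d}\otimes_\H\H^k$ is a bona fide real vector space, verify that $\Pin(m+1)$ (resp.\ $\Spin(m+1)$) acts right-$\H$-linearly — so that it only affects the left factor — while $\Sp(k)$ acts by left $\H$-matrices affecting only the right factor, and check that the two actions genuinely commute on the tensor product and not merely coordinatewise on $\g{d}^k$. One must also confirm that a $\Pin(m+1)$-invariant inner product on $\g{d}$ is invariant under right multiplication by unit elements of $\mathbb{K}$, so that the product inner product on $\g{d}\otimes_\mathbb{K}\mathbb{K}^k$ really matches $\langle\cdot,\cdot\rangle$ on $V$ up to a positive scale; this holds because such right multiplications themselves belong to $\U^+$ of a Clifford system on $\g{d}$, hence are orthogonal. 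The remaining verifications are routine unwindings of the definitions.
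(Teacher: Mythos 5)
Your proposal is correct and follows essentially the same route as the paper: the paper's proof consists precisely of the identification $\bigoplus_{i=1}^k\g{d}\cong\g{d}\otimes_{\mathbb{K}}\mathbb{K}^k$, the observation that $Q=\chi_k(f)$ acts diagonally as $\chi(f)$ on the left factor while $U=(U_{ij})\in\U^+(\cal{P})$ acts through the matrix of commutant scalars $\wt{u}_{ij}\in\mathbb{K}$ on the right factor, combined with Theorems~\ref{th:auto_FKM_1} and~\ref{th:U+}. Your extra care with the right/left $\H$-module conventions and the compatibility of inner products only makes explicit what the paper dismisses as ``straightforward to check.''
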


If $m$ is even, the description of the group of automorphisms of $\cal{F}_\cal{P}$ depends on $\U^-(\cal{P})$. We will only need a description of this set for the case $m\equiv 0\,(\mod 4)$.

\begin{proposition}\label{prop:U-}
Let $\cal{P}$ be a symmetric Clifford system with $m\equiv 0\,(\mod 4)$. We have that $\U^-(\cal{P})=\emptyset$ if $k_+\neq k_-$, or $\U^-(\cal{P})=\tau U^+(\cal{P})$ if $k_+=k_-$,
where $\tau$ is the orthogonal transformation of $V=(\oplus_{i=1}^{k_+}\g{d}_+)\oplus(\oplus_{i=1}^{k_-}\g{d}_-)$ defined by 
\[
\tau(v_1,\dots,v_{k_+},v_{k_++1},\dots,v_k)=(v_{k_++1},\dots,v_{k},v_{1},\dots,v_{k_+}),\]
where $v_i\in\g{d}_+$ for $i=1,\dots,k_+$, and $v_i\in\g{d}_-$ for $i=k_++1,\dots,k$.
\end{proposition}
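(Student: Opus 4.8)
The plan is to analyze $\U^-(\cal{P})$ exactly as $\U^+(\cal{P})$ was analyzed in the proof of Theorem~\ref{th:U+}, by writing a real endomorphism $U$ of $V=(\oplus_{i=1}^{k_+}\g{d}_+)\oplus(\oplus_{i=1}^{k_-}\g{d}_-)$ in block form $U=(U_{ij})$ and determining which blocks can be nonzero. First I would recall that, since $m\equiv0\,(\mod 4)$, we have $\chi_-=\chi_+\circ\alpha$, where $\alpha$ is the canonical involution extending $-\Id$ on $\cal{E}$; in particular $\chi_-(E_l)=-\chi_+(E_l)$ on the generators, so that $\chi_+$ and $\chi_-$ \emph{do} agree (up to sign) on each $P_l=\chi(E_l)$ when restricted appropriately. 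The condition $PU=-UP$ for all $P\in\cal{P}$ unwinds, block by block, to $U_{ij}\,\chi_{?}(E_l)=-\chi_{?}(E_l)\,U_{ij}$ for the relevant signs; the key point is that a block $U_{ij}$ going between a $\g{d}_+$-summand and a $\g{d}_-$-summand intertwines $-\chi_+$ with $-\chi_-=\chi_+$ up to the correct sign, i.e.\ such a block satisfies an \emph{equivariance} (not anti-equivariance) condition and hence may be nonzero, whereas a block between two summands of the same type would have to intertwine $\chi_\pm$ with $-\chi_\pm$, which is impossible because $\g{d}_\pm$ is irreducible and $\chi_\pm\not\cong-\chi_\pm$ for $m+1$ odd (a dimension/Schur argument, or simply because the volume element acts by different scalars). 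Consequently $\U^-(\cal{P})=\emptyset$ unless $k_+=k_-$, and when $k_+=k_-$ the nonzero blocks are precisely the ``off-diagonal'' $\g{d}_+\leftrightarrow\g{d}_-$ intertwiners.

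Next I would identify those off-diagonal intertwiners explicitly. Using $\chi_-=\chi_+\circ\alpha$, an element of $\U^-(\cal{P})$ decomposes as $\tau$ composed with an element commuting with all of $\Cl(\cal{P})$; more precisely, the permutation $\tau$ exchanging the two blocks of equal size $k_+=k_-$ satisfies $P\tau=-\tau P$ for all $P\in\cal{P}$ (this is a direct computation from $\chi_-(E_l)=-\chi_+(E_l)$ together with the fact that $\tau$ sends $\g{d}_+$-components to $\g{d}_-$-components and vice versa), so $\tau\in\U^-(\cal{P})$. Then for any $U\in\U^-(\cal{P})$ the composition $\tau^{-1}U$ satisfies $P(\tau^{-1}U)=\tau^{-1}(-PU)=\tau^{-1}(U P)$ wait---more carefully: $P\tau^{-1}=-\tau^{-1}P$ as well (since $\tau^2=\Id$), so $P(\tau^{-1}U)=-\tau^{-1}PU=\tau^{-1}UP=(\tau^{-1}U)P$ would require $PU=-UP$, which holds; hence $\tau^{-1}U\in\U^+(\cal{P})$, giving $U\in\tau\,\U^+(\cal{P})$. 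The reverse inclusion $\tau\,\U^+(\cal{P})\subseteq\U^-(\cal{P})$ is the same computation run backwards, and since $\tau$ is orthogonal and $\U^+(\cal{P})\subseteq\O(V)$, every element of $\tau\,\U^+(\cal{P})$ is orthogonal. This yields $\U^-(\cal{P})=\tau\,\U^+(\cal{P})$ when $k_+=k_-$, which is the claimed statement (note $\U^+(\cal P)$ here is denoted $U^+(\cal P)$ in the statement; I would keep the notation consistent).

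**Main obstacle.** The delicate point is the bookkeeping of signs coming from the involution $\alpha$ and the identification $\chi_-=\chi_+\circ\alpha$: one must be careful that $\U^-$ is defined by $PU=-UP$ for $P\in\cal{P}$, i.e.\ on the \emph{degree-one} part, so the relevant comparison is between $\chi_\pm$ evaluated on generators $E_l$, where $\chi_-(E_l)=-\chi_+(E_l)$, rather than on arbitrary Clifford elements (where even-degree elements would give the opposite sign). Getting these signs right is exactly what forces the off-diagonal blocks to be the \emph{equivariant} ones and the diagonal blocks to vanish, and it is also what makes $\tau$ (rather than some twisted version of it) land in $\U^-(\cal{P})$. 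Once the sign conventions are pinned down, the rest is the block-matrix Schur's lemma argument already used for $\U^+(\cal{P})$, together with the observation that $\g{d}_+\not\cong\g{d}_-$ as $\Cl^*_{m+1}$-modules (which is precisely why cross-type blocks in $\U^+(\cal{P})$ vanished in Theorem~\ref{th:U+}, and symmetrically why same-type blocks in $\U^-(\cal{P})$ vanish here).
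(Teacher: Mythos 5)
Your proof is correct and follows essentially the same route as the paper: write an anticommuting endomorphism in block form, use Schur's lemma together with the inequivalence $\chi_+\not\cong\chi_-$ (and the sign flip $\chi_-(E_l)=-\chi_+(E_l)$ on generators) to show the same-type blocks vanish and the cross-type blocks are the equivariant ones, then check $\tau\in\U^-(\cal{P})$ directly and conclude via the coset identity $\U^-(\cal{P})=\tau\,\U^+(\cal{P})$. The only step worth stating explicitly is why $k_+\neq k_-$ forces $\U^-(\cal{P})=\emptyset$: an element of $\U^-(\cal{P})$ is orthogonal, hence invertible, but a matrix whose only possibly nonzero blocks are the rectangular cross-type ones of mismatched sizes cannot be invertible.
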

\begin{proof}
First note that if $\sigma$ is an element in $\U^-(\cal{P})$, then $\U^-(\cal{P})=\sigma\U^+(\cal{P})$. With the notation as above, let $\sigma=(\sigma_{ij})$ anticommute with the endomorphisms in $\chi_{k_+,k_-}(\cal{E})$. Equivalently, for all $f\in\cal{E}$ we have that $\sigma_{ij}\chi_+(f)=-\chi_+(f)\sigma_{ij}$ if $i,j\in\{1,\dots,k_+\}$, $\sigma_{ij}\chi_-(f)=-\chi_-(f)\sigma_{ij}$ if $i,j\in\{k_++1,\dots,k\}$, $\sigma_{ij}\chi_+(f)=-\chi_-(f)\sigma_{ij}$ if $i\in\{k_++1,\dots, k\}$ and $j\in\{1,\dots,k_+\}$, and $\sigma_{ij}\chi_-(f)=-\chi_+(f)\sigma_{ij}$ if $j\in\{k_++1,\dots, k\}$ and $i\in\{1,\dots,k_+\}$. Since $\chi_-=\chi_+\circ\alpha$ and $\chi_+$, $\chi_-$ are not equivalent, these conditions imply that $\sigma_{ij}=0$ if $i,j\in\{1,\dots,k_+\}$ or $i,j\in\{k_++1,\dots,k\}$. 

If $k_+\neq k_-$ then $\sigma$ is not invertible, so $\U^-(\cal{P})=\emptyset$. If $k_+=k_-$, the orthogonal transformation $\tau=(\tau_{ij})$ given above satisfies $\tau_{ij}=\Id$ if $i=k_++j$ or $j=k_++i$, and $\tau_{ij}=0$ otherwise. Then $\tau$ anticommutes with the elements of $\chi_{k_+,k_-}(\cal{E})$.
\end{proof}

\section{Singular Riemannian foliations on complex projective spaces}\label{sec:foliations}
In this section we present some general theory for the study of singular Riemannian foliations with closed leaves on $\C P^n$. First, we recall the definition of singular Riemannian foliation. In \S\ref{subsec:preserving} we obtain a criterion to determine all complex structures preserving a given foliation. The congruence of foliations on $\C P^n$ projected using different complex structures is analyzed in \S\ref{subsec:congruence}.

Let $\mathcal{F}$ be a decomposition of a Riemannian manifold into connected injectively immersed submanifolds, called \emph{leaves}, which may have different dimensions. We say that $\mathcal{F}$ is a \emph{singular Riemannian foliation} if it is a transnormal system (i.e.\ every geodesic orthogonal to one leaf remains orthogonal to all the leaves that it intersects), and if $T_p L=\{X_p:X\in\Xi_\mathcal{F}\}$ for every leaf $L$ in $\mathcal{F}$ and every $p\in L$, where $\Xi$ is the module of smooth vector fields on the ambient manifold that are everywhere tangent to the leaves of $\mathcal{F}$. The leaves of maximal dimension are called \emph{regular}, and the other ones are called \emph{singular}. Further information on this concept can be found in \cite{Al04}, \cite{Th10}.
For the sake of brevity, in this work we refer to singular Riemannian foliations simply as foliations. 

\subsection{Complex structures preserving foliations}\label{subsec:preserving}

Let $V=\R^{2n+2}$ and let $\cal{F}$ be a foliation on $S(V)=S^{2n+1}$. We will say that a complex structure $J$ in $V$ \emph{preserves the foliation} $\mathcal{F}$ if $\mathcal{F}$ is the lift of some foliation of the complex projective space $\C P^n$ under the Hopf map $S^{2n+1}\to\C P^n$ determined by $J$; or equivalently, if the leaves of $\mathcal{F}$ are foliated by the Hopf circles determined by $J$.
Since Hopf fibrations are Riemannian submersions, each foliation on $\C P^n$ is obtained by projecting some foliation $\cal{F}$ on $S^{2n+1}$ by some Hopf map whose $J$ preserves $\cal{F}$. Therefore, the study of foliations on complex projective spaces is reduced to the study of the complex structures that preserve foliations on odd-dimensional spheres.

It is equivalent to give a foliation of a sphere $S(V)$ and to give a foliation of the Euclidean space $V$ whose leaves are contained in concentric spheres with center at the origin: simply extend the given foliation on $S(V)$ by homotheties to $V$, or inversely, restrict the foliation on $V$ to $S(V)$. Sometimes along this work we will implicitly take this fact into account. 

Fix a foliation $\cal{F}$ of the sphere $S(V)\subset V$. Consider an effective representation $\rho\colon K\to \O(V)$ of a Lie group $K$ such that $\rho(K)$ is the maximal connected group of orthogonal transformations of $V$ that send each leaf of $\cal{F}$ onto itself. Let $\rho_*\colon\g{k}\to\g{so}(V)$ be the Lie algebra homomorphism defined by $\rho$.

\begin{proposition}\label{prop:complex_structure}
With $\cal{F}$, $K$, and $\rho$ as above, we have:
\begin{itemize}
\item[(i)] A complex structure $J$ on $V$ preserves $\cal{F}$ if and only if $J=\rho_*(X)$ for some $X\in\g{k}$.
\item[(ii)] Assume that $K$ is compact and fix a maximal abelian subalgebra $\g{t}$ of $\g{k}$. If $H\in\g{t}$ and $k\in K$, then $\rho_*(H)$ is a complex structure on $V$ if and only if $\rho_*(\Ad(k)H)$ is a complex structure on $V$. Moreover, a complex structure $J$ on $V$ preserves $\cal{F}$ if and only if $J=\rho_*(\Ad(k)H)$ for some $k\in K$ and $H\in\g{t}$.
\item[(iii)] Let $\cal{F}=\cal{F}_1\times \dots\times \cal{F}_r$ be a product foliation on $V=\bigoplus_{i=1}^r V_i$, where each $\cal{F}_i$ is the extension of a foliation on $S(V_i)$ to $V_i$. Then $K=\prod_{i=1}^r K_i$ for certain subgroups $K_i$ of $K$, where $\rho(K_i)$ is the maximal connected group of orthogonal transformations of $V$ that act trivially on the orthogonal complement of $V_i$ in $V$ and map the leaves of $\cal{F}_i$ onto themselves. If $X=\sum_{i=1}^r X_i\in\g{k}=\bigoplus_{i=1}^r \g{k}_i$, then $\rho_*(X)$ is a complex structure on $V$ if and only if $\rho_*(X_i)\rvert_{V_i}$ is a complex structure on $V_i$, for every $i$.
\end{itemize}
\end{proposition}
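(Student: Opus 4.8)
The plan is to characterize, purely Lie-theoretically, which orthogonal transformations of $V$ are complex structures preserving $\cal{F}$, and to reduce everything to the infinitesimal level using the group $K=\rho(K)$ introduced above. The starting observation is the one recalled in \S\ref{subsec:preserving}: a complex structure $J$ preserves $\cal{F}$ if and only if every leaf of $\cal{F}$ is foliated by the Hopf circles of $J$, i.e.\ if and only if the orbits of the one-parameter group $t\mapsto \exp(tJ)=\cos(t)\Id+\sin(t)J$ are contained in the leaves of $\cal{F}$. For part (i): if $J$ preserves $\cal{F}$, this one-parameter group is a connected group of orthogonal transformations sending each leaf to itself (each Hopf circle lies in a leaf, and since the flow moves points along Hopf circles, it maps each leaf into itself; being a group of isometries it maps each leaf onto itself), hence by maximality of $\rho(K)$ it is contained in $\rho(K)$, so its generator $J$ lies in $\rho_*(\g{k})$. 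Conversely, if $J=\rho_*(X)$ with $X\in\g{k}$ and $J$ happens to be a complex structure (orthogonal and skew with $J^2=-\Id$), then $\exp(tX)$ acts as $\exp(tJ)$, so each $\rho(K)$-orbit — in particular each orbit of this subgroup — lies in a single leaf; thus the $J$-Hopf circles, which are precisely these orbits, foliate the leaves, and $J$ preserves $\cal{F}$. So (i) is essentially immediate from the maximality hypothesis on $\rho(K)$.

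For part (ii), assume $K$ compact and fix a maximal abelian subalgebra $\g{t}\subset\g{k}$. If $H\in\g{t}$ and $k\in K$, then $\rho_*(\Ad(k)H)=\rho(k)\,\rho_*(H)\,\rho(k)^{-1}$, a conjugate of $\rho_*(H)$ by an orthogonal transformation; conjugation preserves the property of being orthogonal, skew-symmetric, and squaring to $-\Id$, so $\rho_*(H)$ is a complex structure iff $\rho_*(\Ad(k)H)$ is. For the second assertion: given any complex structure $J$ preserving $\cal{F}$, by (i) write $J=\rho_*(X)$ with $X\in\g{k}$; since $K$ is compact, $X$ is conjugate under $\Ad(K)$ into $\g{t}$, say $\Ad(k^{-1})X=H\in\g{t}$, whence $X=\Ad(k)H$ and $J=\rho_*(\Ad(k)H)$. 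Conversely any such $\rho_*(\Ad(k)H)$ that is a complex structure preserves $\cal{F}$ by (i).

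For part (iii): in the product situation $\cal{F}=\cal{F}_1\times\cdots\times\cal{F}_r$ on $V=\bigoplus V_i$, a connected group of orthogonal transformations of $V$ preserving each leaf of $\cal{F}$ preserves each "slice" foliation and hence (by connectedness, looking at how it acts on the orthogonal complements, and using that the leaves of $\cal{F}$ are products of leaves of the $\cal{F}_i$) decomposes as a product $\prod K_i$, with $\rho(K_i)$ as described; this is where one must be a little careful — the identity component of the full automorphism group of a product foliation need not split as a product in general, but the subgroup preserving every leaf does, essentially because a vector field tangent to all leaves of $\cal{F}$ decomposes along the $V_i$. Granting this, $\g{k}=\bigoplus\g{k}_i$ and for $X=\sum X_i$ we have $\rho_*(X)=\bigoplus \rho_*(X_i)\rvert_{V_i}$ as a block-diagonal endomorphism; such a block-diagonal orthogonal map is a complex structure (skew, squaring to $-\Id$) iff each block is. I expect the main obstacle to be precisely the splitting $K=\prod_i K_i$ in (iii): one needs to argue that an element of $\rho(K)$ fixing every leaf of a product foliation cannot "mix" the factors $V_i$, which requires knowing that it must act by isometries sending each $V_i$-direction to itself — this should follow from the structure of the leaves near a regular point (the tangent space to a regular leaf splits as $\bigoplus T(\text{leaf of }\cal{F}_i)$, and an isometry preserving all leaves must preserve this splitting and the orthogonal complement of each $V_i$), but it is the only point demanding genuine geometric input rather than formal manipulation.
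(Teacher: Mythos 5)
Your argument follows essentially the same route as the paper's proof: part (i) via the one-parameter group $\{\cos(t)\Id+\sin(t)J\}$ and the maximality of $\rho(K)$ (the paper phrases this by adjoining this group to $\rho(K)$ and invoking maximality of the resulting connected group, which is the same idea), part (ii) via $\rho_*(\Ad(k)H)=\rho(k)\rho_*(H)\rho(k)^{-1}$ and $\g{k}=\bigcup_{k\in K}\Ad(k)\g{t}$, and part (iii) via the block-diagonal structure coming from the invariance of the $V_i$ and the product structure of the leaves. Your extra care about the splitting $K=\prod_i K_i$ in (iii) is reasonable but not a different approach; the paper simply asserts this from the effectiveness of $\rho$ and the same two facts you use.
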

\begin{proof}
If $J=\rho_*(X)$ is a complex structure on $V$, its Hopf circles are integral curves of the Hopf vector field $J\xi$, where $\xi_v=v$ for $v\in S(V)$, and each Hopf circle is contained in one leaf of $\cal{F}$, since for every $v\in V$, $Jv=\rho_*(X) v$ is tangent to the leaf through $v$. 

Conversely, assume that $J$ is a complex structure on $V$ that preserves $\cal{F}$. Then $\cal{T}^1=\{\cos(t)\Id+\sin(t)J:t\in\R\}$ is a $1$-dimensional group which preserves $\cal{F}$. Let $K'$ be the subgroup of $\O(V)$ generated by $\rho(K)$ and $\cal{T}^1$, which is connected and leaves every leaf of $\cal{F}$ invariant. By the maximality of $\rho(K)$, we have that $K'\subset \rho(K)$ and then $\cal{T}^1$ is a subgroup of $\rho(K)$. If we differentiate, we get that $J\in\rho_*(\g{k})$, which shows (i).

Every transformation of $V$ of the form $\rho_*(X)$, with $X\in \g{k}$, is a complex structure if and only if $\rho_*(X)^2=-\Id$, since $\rho_*(X)\in\g{so}(V)$ is skew-symmetric. Then, with the notation of (ii), we have that $\rho_*(\Ad(k)H) = \Ad(\rho(k))\rho_*(H)=\rho(k)\rho_*(H)\rho(k)^{-1}$ and, hence, $\rho_*(\Ad(k)H)^2=\rho(k)\rho_*(H)^2\rho(k)^{-1}$. Since $K$ is a connected compact Lie group, then $\g{k}=\bigcup_{k\in K}\Ad(k)\g{t}$, from where we get (ii).

Finally, (iii) follows from the effectiveness of $\rho$ and from the facts that the leaves of $\cal{F}$ are products of leaves of the foliations $\cal{F}_i$, and the $V_i$ are invariant subspaces for $\rho$.
\end{proof}

From now on, $\cal{F}$ will be a foliation with closed leaves on $S(V)$. Then $K$ is compact. We also fix a maximal abelian subalgebra $\g{t}$ of $\g{k}$. Let $(\cdot)^\C$ denote complexification. We will use some known facts on compact Lie groups that can be consulted in \cite[Ch.~IV]{Kn02}.

Let $\Delta_\g{k}=\Delta(\g{k}^\C,\g{t}^\C)$ be the root system of $\g{k}$ with respect to $\g{t}$, that is, the set of nonzero elements $\alpha\in(\g{t}^\C)^*$ such that the corresponding eigenspace $\g{k}_\alpha=\{X\in\g{k}^\C:\ad(H)X=i\alpha(H)X,\text{ for all }H\in\g{t}\}$ is nonzero. Let  $\g{k}^\C=\g{t}^\C\oplus\bigoplus_{\alpha\in\Delta_\g{k}}\g{k}_\alpha$ be the root space decomposition of $\g{k}^\C$ with respect to $\g{t}^\C$. Recall that $\g{t}=Z(\g{k})\oplus\g{t}'$, where $Z(\g{k})$ is the center of $\g{k}$ and $\g{t}'$ is a maximal abelian subalgebra of the semisimple Lie algebra $[\g{k},\g{k}]$. The roots in $\Delta_\g{k}$ vanish on $Z(\g{k}^\C)$, and $\Delta_\g{k}$ is an abstract reduced root system in the subspace $((\g{t}')^\C)^*$ of $(\g{t}^\C)^*$.

Let $\Delta_V=\Delta(V^\C,\g{t}^\C)$ be the set of weights of the representation $\rho_*^\C\colon\g{k}^\C\to\g{gl}(V^\C)$, that is, those elements $\lambda\in(\g{t}^\C)^*$ so that the subspace $V_\lambda=\{v\in V^\C:\rho_*^\C(H)v=i\lambda(H)v, \text{ for all } H\in\g{t}\}$ is nonzero. Then we have the weight space decomposition $V^\C=\bigoplus_{\lambda\in\Delta_V} V_\lambda$. Notice that, according to our notation, all roots and weights are real on $\g{t}$.

\begin{proposition}\label{prop:plus_minus_1}
Let $H\in\g{t}$. Then $\rho_*(H)$ is a complex structure on $V$ if and only if $\lambda(H)\in\{\pm 1\}$ for every weight $\lambda$ of the representation $\rho_*^\C$.
\end{proposition}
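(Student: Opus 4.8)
The plan is to diagonalize $\rho_*(H)$ over $\C$ and read off the condition $\rho_*(H)^2 = -\Id_V$ in terms of the weights. First I would note that since $\rho_*(H) \in \g{so}(V)$ is skew-symmetric, $\rho_*(H)$ is a complex structure on $V$ if and only if $\rho_*(H)^2 = -\Id_V$, which happens if and only if the complexified endomorphism $\rho_*^\C(H)$ of $V^\C$ satisfies $\rho_*^\C(H)^2 = -\Id_{V^\C}$. So the statement reduces to an assertion purely about the eigenvalues of $\rho_*^\C(H)$ on $V^\C$.

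Next I would invoke the weight space decomposition $V^\C = \bigoplus_{\lambda \in \Delta_V} V_\lambda$ already recalled in the excerpt: by definition, on $V_\lambda$ the endomorphism $\rho_*^\C(H)$ acts as the scalar $i\lambda(H)$ (note $\lambda(H) \in \R$ since weights are real on $\g{t}$). Hence $\rho_*^\C(H)^2$ acts on $V_\lambda$ as the scalar $-\lambda(H)^2$. Therefore $\rho_*^\C(H)^2 = -\Id_{V^\C}$ holds if and only if $-\lambda(H)^2 = -1$, i.e.\ $\lambda(H)^2 = 1$, i.e.\ $\lambda(H) \in \{\pm 1\}$, for every weight $\lambda \in \Delta_V$ with $V_\lambda \neq 0$. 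Combining with the reduction of the first step gives the claimed equivalence.

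There is essentially no serious obstacle here; the only point requiring a word of care is the passage between the real condition $\rho_*(H)^2 = -\Id_V$ and its complexification, and the observation that one need only quantify over those $\lambda$ that actually occur as weights (which is exactly what $\Delta_V$ denotes, so this is automatic). One might also remark, for completeness, that the weights come in pairs $\pm\lambda$ because $V$ is a real representation, so the conditions $\lambda(H) = 1$ and $\lambda(H) = -1$ are not independent; but this is not needed for the statement. I would keep the proof to a few lines: reduce to $\rho_*^\C(H)^2 = -\Id$, decompose into weight spaces, and compute the scalar by which $\rho_*^\C(H)^2$ acts on each $V_\lambda$.
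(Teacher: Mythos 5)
Your argument is correct and coincides with the paper's own proof: both reduce the condition to $\rho_*^\C(H)^2=-\Id$ via skew-symmetry and then read off the scalar $-\lambda(H)^2$ on each weight space of the decomposition $V^\C=\bigoplus_{\lambda\in\Delta_V}V_\lambda$. Nothing is missing.
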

\begin{proof}
The skew-symmetric transformation $\rho_*(H)$ is a complex structure on $V$ if and only if $\rho_*(H)^2 = -\Id$, or equivalently, if $\rho_*^\C(H)^2=-\Id$.  For an arbitrary $\lambda\in \Delta_V$, let $v_\lambda\in V_\lambda$. Then  $\rho_*^\C(H)^2v_\lambda=-\lambda(H)^2v_\lambda$. Hence $\rho_*^\C(H)^2=-\Id$ if and only if $\lambda(H)^2=1$ for all $\lambda\in\Delta_V$.
\end{proof}

In view of Propositions \ref{prop:complex_structure} and \ref{prop:plus_minus_1}, if $0$ is a weight of $\rho_*^\C$, then $\cal{F}$ cannot be projected to the complex projective space.   
Moreover, once one knows the maximal connected group of orthogonal transformations preserving the leaves of the foliation $\cal{F}$, these propositions allow to determine all possible complex structures preserving $\cal{F}$ in a computational way.

\subsection{Congruence of projected foliations}\label{subsec:congruence}

Now we focus on the study of the congruence of foliations on complex projective spaces. We start with the following basic result.

\begin{proposition}\label{prop:general_foliation}
Let $V=\R^{2n+2}$. Let $J_1$, $J_2$ be two complex structures on $V$, $\C P^n_1$, $\C P^n_2$ the corresponding complex projective spaces, and $\pi_1$, $\pi_2$ the corresponding Hopf maps. 

Two foliations $\mathcal{G}_1\subset\C P^n_1$ and $\mathcal{G}_2\subset\C P^n_2$ are congruent if and only if there exists an orthogonal transformation $A\in \O(V)$ satisfying $A J_1 A^{-1}=\pm J_2$ and mapping leaves of $\pi_1^{-1}\mathcal{G}_1$ to leaves of $\pi_2^{-1}\mathcal{G}_2$.
\end{proposition}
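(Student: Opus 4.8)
The plan is to translate the congruence statement on $\C P^n$ into a statement on the sphere $S(V)$ by using the fact that a congruence between $\mathcal{G}_1$ and $\mathcal{G}_2$ must be realized by an isometry of $\C P^n$, and that isometries of $\C P^n$ lift (non-uniquely) to orthogonal transformations of $V$ that conjugate the relevant complex structures up to sign. First I would prove the ``if'' direction: suppose $A\in\O(V)$ satisfies $AJ_1A^{-1}=\pm J_2$ and maps leaves of $\pi_1^{-1}\mathcal{G}_1$ to leaves of $\pi_2^{-1}\mathcal{G}_2$. The relation $AJ_1A^{-1}=\pm J_2$ means $A$ carries Hopf circles of $J_1$ to Hopf circles of $J_2$, hence $A$ descends to a well-defined diffeomorphism $\bar A\colon\C P^n_1\to\C P^n_2$ with $\pi_2\circ A=\bar A\circ\pi_1$ (in the $-$ case one composes with complex conjugation on the target, which is still an isometry of the Fubini--Study metric). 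Since $A$ is orthogonal and the Fubini--Study metrics are defined via horizontal lifts (as recalled in Section~\ref{sec:Hopf}), $\bar A$ is an isometry; and since $A$ permutes leaves of the lifted foliations, $\bar A$ permutes leaves of $\mathcal{G}_1$ and $\mathcal{G}_2$. Thus $\mathcal{G}_1$ and $\mathcal{G}_2$ are congruent.

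For the ``only if'' direction, assume $\mathcal{G}_1$ and $\mathcal{G}_2$ are congruent, so there is an isometry $g\colon\C P^n_1\to\C P^n_2$ sending leaves of $\mathcal{G}_1$ to leaves of $\mathcal{G}_2$. The key input is the structure of the isometry group of complex projective space: every isometry of $(\C P^n,J)$ is either holomorphic or anti-holomorphic, and in either case lifts to a transformation of $V=\R^{2n+2}$. More precisely, identifying $(V,J_1)\cong\C^{n+1}$ and $(V,J_2)\cong\C^{n+1}$, a holomorphic isometry of $\C P^n$ is induced by an element of $\U(n+1)$ and an anti-holomorphic one by an element of $\U(n+1)$ composed with conjugation; in real terms this gives an $A\in\O(V)$ with $AJ_1A^{-1}=J_2$ in the holomorphic case and $AJ_1A^{-1}=-J_2$ in the anti-holomorphic case, and in both cases $\pi_2\circ A=g\circ\pi_1$. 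Because $g$ maps leaves of $\mathcal{G}_1$ to leaves of $\mathcal{G}_2$ and the lifted foliations $\pi_i^{-1}\mathcal{G}_i$ are exactly the preimages, $A$ maps leaves of $\pi_1^{-1}\mathcal{G}_1$ to leaves of $\pi_2^{-1}\mathcal{G}_2$. This yields the desired $A$.

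The main obstacle I anticipate is the careful bookkeeping of the lifting: one must verify that an abstract isometry $g$ of $\C P^n$ genuinely lifts to an orthogonal transformation of the sphere (rather than merely to a diffeomorphism) and that the two copies of the complex structure are conjugated up to sign as claimed. This rests on the classical description of $\mathrm{Isom}(\C P^n)=\mathrm{PU}(n+1)\rtimes\mathbb{Z}_2$ together with the compatibility of the Fubini--Study metric with the submersion $\pi$; the sign ambiguity is precisely the distinction between the holomorphic and anti-holomorphic components of the isometry group. A secondary technical point is that the lift $A$ is only determined up to the $S^1$-action generated by $J_1$, but any choice works since all such choices conjugate $J_1$ to the same $\pm J_2$ and permute the same leaves. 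Everything else — that orthogonal maps commuting with the $S^1$-actions descend to isometries, and that permuting lifted leaves is equivalent to permuting leaves downstairs — is routine given the setup of Section~\ref{sec:Hopf}.
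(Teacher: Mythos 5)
Your proposal is correct and follows essentially the same route as the paper: both rest on the classical fact that isometries of $\C P^n$ are exactly the maps induced by unitary or anti-unitary transformations of $V$ (i.e.\ $A\in\O(V)$ with $AJ_1A^{-1}=\pm J_2$), combined with the observation that the induced isometry permutes leaves downstairs if and only if $A$ permutes the lifted leaves. The paper simply states this in one line, whereas you spell out the lifting and descent; one small remark is that in the case $AJ_1A^{-1}=-J_2$ the map $A$ already carries $J_1$-Hopf circles to $J_2$-Hopf circles and so descends directly, with no need to compose with conjugation on the target.
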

\begin{proof}
$\mathcal{G}_1$ and $\mathcal{G}_2$ are congruent if and only if there exists a unitary or anti-unitary transformation $A$ between $(V,J_1)$ and $(V,J_2)$ (i.e.\ $A\in \O(V)$ and $A J_1 A^{-1}=\pm J_2$) whose induced isometry $[A]:\C P^n_1\to\C P^n_2$ takes the leaves of $\mathcal{G}_1$ to the leaves of $\mathcal{G}_2$. But this condition is equivalent to the one in the statement.
\end{proof}

In particular, a necessary condition for two foliations on a complex projective space to be congruent is that their lifts to the sphere are congruent. In view of this, in order to study the congruence of foliations on complex projective spaces it suffices to decide when two complex structures preserving some fixed foliation give rise to congruent foliations in the corresponding complex projective spaces.

Let $\cal{F}$, $K$, $\rho$, and $\g{t}$ be as in \S\ref{subsec:preserving}. Consider two complex structures $J_i=\rho_*(X_i)$, $i=1,2$, on $V$ preserving the foliation $\mathcal{F}$. Let us say that $J_1$ and $J_2$ are equivalent, and write $J_1\sim J_2$, if $J_1$ and $J_2$ give rise to congruent foliations on the complex projective space. We also denote by $\sim$ the corresponding equivalence relation on the subset $\cal{J}$ of those $X$ in $\g{k}$ such that $\rho_*(X)$ is a complex structure preserving $\mathcal{F}$. The problem of the congruence of foliations on $\C P^n$ is then reduced to the determination of the $\sim$-equivalence classes of $\cal{J}$. 

Let $\Aut(\cal{F})$ be the group of automorphisms of the foliation $\cal{F}$, i.e.\ the group of those orthogonal transformations of $V$ that map leaves of $\cal{F}$ to leaves of $\cal{F}$. Clearly, $\rho(K)$ is a subgroup of $\Aut(\cal{F})$. 
Due to the effectiveness of $\rho$, each $A\in\Aut(\cal{F})$ defines an automorphism $\phi_A\in\Aut(\g{k})$ of the Lie algebra $\g{k}$, by means of the relation $A\rho_*(X)A^{-1}=\rho_*(\phi_A(X))$. Consider the group $\Aut(\g{k},\cal{F})$ of those linear isomorphisms $\varphi_A\colon\g{k}\oplus V\to\g{k}\oplus V$ defined by $\varphi_A\rvert_\g{k}=\phi_A$ and $\varphi_A\rvert_V=A$, where $A$ runs over $\Aut(\cal{F})$. Note that $(\Ad\oplus\rho)(K)=\{\varphi_{\rho(k)}:k\in K\}$ is a subgroup of $\Aut(\g{k},\cal{F})$.

In view of this notation, Proposition~\ref{prop:general_foliation} asserts that two complex structures $J_i=\rho_*(X_i)$, $i=1,2$, are equivalent (i.e.\ $X_1\sim X_2$) if and only if there exists $A\in \Aut(\cal{F})$ with $A\rho_*(X_1) A^{-1}=\pm \rho_*(X_2)$, or equivalently, if there exists $\varphi\in\Aut(\g{k},\cal{F})$ so that $\varphi X_1=\pm X_2$.

Every $\sim$-equivalence class intersects the maximal abelian subalgebra $\g{t}$ of $\g{k}$, since $\g{k}=\bigcup_{k\in K} \Ad(k)\g{t}$ and $\Ad(K)\subset \Aut(\g{k},\cal{F})\rvert_\g{k}$. We can therefore restrict $\sim$ to $\g{t}$ and analyze the set $\cal{J}\cap\g{t}$ and its partition in $\sim$-equivalence classes. 

\begin{proposition}\label{prop:invariant_torus}
Let $T_1,T_2\in\cal{J}\cap\g{t}$. Then $T_1\sim T_2$ if and only if there exists an automorphism $\varphi\in \Aut(\g{k},\cal{F})$ preserving $\g{t}$ such that $\varphi T_1=\pm T_2$.
\end{proposition}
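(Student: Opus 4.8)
The plan is to prove the nontrivial implication: if $T_1 \sim T_2$ with $T_1, T_2 \in \cal{J}\cap\g{t}$, then there is an automorphism $\varphi \in \Aut(\g{k},\cal{F})$ \emph{preserving} $\g{t}$ with $\varphi T_1 = \pm T_2$. The reverse implication is immediate, since any $\varphi \in \Aut(\g{k},\cal{F})$ with $\varphi T_1 = \pm T_2$ in particular satisfies $\varphi X_1 = \pm X_2$ for $X_i = T_i$, and hence by the characterization of $\sim$ recorded just before the statement (a consequence of Proposition~\ref{prop:general_foliation}) gives $T_1 \sim T_2$.

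For the forward direction, start from an arbitrary $\varphi \in \Aut(\g{k},\cal{F})$ with $\varphi T_1 = \pm T_2$; such a $\varphi$ exists by the characterization of $\sim$. The issue is that $\varphi$ need not preserve $\g{t}$. The standard fix is a conjugacy argument: $\g{t}$ and $\varphi^{-1}(\g{t})$ are both maximal abelian subalgebras of $\g{k}$. More to the point, $\varphi(\g{t})$ is a maximal abelian subalgebra of $\g{k}$ containing $\varphi(T_1) = \pm T_2$, while $\g{t}$ is a maximal abelian subalgebra containing $T_2$. Since $K$ is a compact connected Lie group, all maximal abelian subalgebras of $\g{k}$ are conjugate under $\Ad(K)$; but we need a conjugating element that in addition fixes $T_2$ (up to sign). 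The key classical fact to invoke is that in a compact Lie group the centralizer $Z_K(T_2)$ of the element $T_2 \in \g{t}$ is itself a (possibly disconnected) compact Lie group whose identity component has Lie algebra $\g{z}_\g{k}(T_2) = \{X \in \g{k} : [X,T_2]=0\}$, and that $\g{t}$ is a maximal abelian subalgebra of this centralizer algebra. Now both $\g{t}$ and $\varphi(\g{t})$ are maximal abelian subalgebras of $\g{z}_\g{k}(T_2)$ (the latter because $\varphi(\g{t})$ is abelian, contains $\pm T_2$, and has the right dimension $\dim\g{t} = \rank\g{k}$, which forces maximality inside the centralizer). Hence there is $k \in Z_K(T_2)^\circ$ with $\Ad(k)\varphi(\g{t}) = \g{t}$. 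Then $\psi := \varphi_{\rho(k)} \circ \varphi$ lies in $\Aut(\g{k},\cal{F})$ (it is a composition of elements of that group, using that $(\Ad\oplus\rho)(K) \subset \Aut(\g{k},\cal{F})$), it preserves $\g{t}$ by construction, and $\psi(T_1) = \Ad(k)(\pm T_2) = \pm T_2$ since $k$ centralizes $T_2$. Taking $\varphi := \psi$ finishes the argument.

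The main obstacle — really the only subtle point — is the verification that $\varphi(\g{t})$ is maximal abelian \emph{inside the centralizer} $\g{z}_\g{k}(T_2)$, and the availability of a conjugating element that simultaneously normalizes the subalgebra and fixes $T_2$. This is where one must be slightly careful: one cannot just conjugate $\varphi(\g{t})$ to $\g{t}$ by an arbitrary element of $K$, since that might move $T_2$. The resolution is to work inside the compact connected group $Z_K(T_2)^\circ$ and apply the conjugacy of maximal tori (equivalently, maximal abelian subalgebras) there; the facts needed about centralizers in compact Lie groups are standard and may be found in \cite[Ch.~IV]{Kn02}. A minor bookkeeping point is the $\pm$ sign: since everything is stated up to sign, replacing $\varphi$ by $\psi$ preserves the relation $\psi T_1 = \pm T_2$, and conjugation by an element centralizing $T_2$ does not affect which sign occurs.

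One remark worth making explicitly is that this reduction is exactly what makes the classification of $\sim$-equivalence classes tractable in the later sections: combined with Proposition~\ref{prop:plus_minus_1}, it says that the set $\cal{J}\cap\g{t}$ is described by the finitely many linear conditions $\lambda(T) \in \{\pm 1\}$ over the weights $\lambda \in \Delta_V$, and its partition into $\sim$-classes is governed by the finite group $\Aut(\g{k},\cal{F})$ acting on $\g{t}$ through those automorphisms that stabilize $\g{t}$ — a group that, modulo the Weyl group of $\g{k}$, is captured by the symmetries of the lowest weight diagram alluded to in the introduction.
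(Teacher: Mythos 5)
Your argument is correct and is essentially identical to the paper's proof: both pass to the centralizer $Z_K(T_2)$, observe that $\g{t}$ and $\varphi(\g{t})$ are maximal abelian subalgebras of its Lie algebra, and conjugate by an element of $Z_K(T_2)$ to arrange that the automorphism preserves $\g{t}$ without disturbing $T_2$. The paper cites the same standard conjugacy fact (cf.\ Knapp, Prop.~4.53) that you invoke.
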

\begin{proof}
The sufficiency is clear according to the previous remarks. For the necessity we will use a well-known  argument in the study of compact groups (cf.~\cite[Prop.~4.53]{Kn02}).

Let $\phi\in\Aut(\g{k},\cal{F})$ be such that $\phi T_1=\pm T_2$. The centralizer $Z_K(T_2)$ of $T_2$ in $K$ is a compact group, and $\g{t}$, $\phi(\g{t})$ are maximal abelian subalgebras of $Z_\g{k}(T_2)$, which is the Lie algebra of $Z_K(T_2)$. Hence there exists $k\in Z_K(T_2)$ such that $\Ad(k)\g{t}=\phi(\g{t})$. Define $\varphi=(\Ad\oplus\rho)(k^{-1})\circ\phi\in\Aut(\g{k},\cal{F})$. Then $\varphi(\g{t})=\g{t}$ and $\varphi(T_1)=\pm \Ad(k^{-1})T_2=\pm T_2$.
\end{proof}

Since the leaves of $\cal{F}$ are closed and equidistant, it follows that the group $\Aut(\cal{F})$ is compact, so $\Aut(\g{k},\cal{F})$ and $\Aut(\g{k},\cal{F})\rvert_\g{k}$ are also compact. Hence, there exists a positive definite $\Aut(\g{k},\cal{F})\rvert_\g{k}$-invariant inner product $\langle \cdot,\cdot \rangle$ on $\g{k}$. Then $\langle\g{t}',Z(\g{k})\rangle=0$. Moreover, $\langle \cdot,\cdot \rangle$ restricted to each simple ideal of $\g{k}$ is a negative multiple of the Killing form of such ideal.

For each $\lambda\in\g{t}^*$, we define $H_\lambda\in\g{t}$ by $\langle H_\lambda,H\rangle=\lambda(H)$, for all $H\in \g{t}$. Then $\langle\cdot,\cdot\rangle$ induces an inner product on $\g{t}^*$ in a natural way, by means of $\langle \lambda,\mu\rangle=\langle H_\lambda,H_\mu\rangle$, for $\lambda,\mu\in \g{t}^*$. If $\alpha\in\Delta_\g{k}$ is a root of $\g{k}$, the corresponding $H_\alpha$ will be called a coroot, whereas if $\lambda\in\Delta_V$ is a weight of $\rho_*^\C$, we will say that $H_\lambda$ is a coweight. Note that the coroots belong to $\g{t}'$, since $\g{t}=Z(\g{k})\oplus \g{t}'$ and the roots vanish on $Z(\g{k})$.

We will say that an orthogonal transformation of $\g{t}$ is an automorphism of $\Delta_\g{k}$ if it maps the set of coroots $\{H_\alpha:\alpha\in\Delta_\g{k}\}$ onto itself. The group of automorphisms of $\Delta_\g{k}$ is noted by $\Aut(\Delta_\g{k})$. The subgroup of those automorphisms of $\Delta_\g{k}$ that map the set of coweights $\{H_\lambda:\lambda\in\Delta_V\}$ onto itself will be denoted by $\Aut(\Delta_\g{k},\Delta_V)$. The action of $\Aut(\Delta_\g{k})$ on $\g{t}$ induces an action of $\Aut(\Delta_\g{k})$ on $\g{t}^*$ by means of $\varphi(\alpha)=\alpha\circ\varphi^{-1}$ for $\alpha\in\g{t}^*$ and $\varphi\in\Aut(\Delta_\g{k})$.

\begin{proposition}\label{prop:auto_roots}
The restriction to $\g{t}$ of each element of $\Aut(\g{k},\cal{F})$ preserving $\g{t}$ gives an element of $\Aut(\Delta_\g{k},\Delta_V)$. 
\end{proposition}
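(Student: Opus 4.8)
The plan is to take an automorphism $\varphi = \varphi_A \in \Aut(\g{k},\cal{F})$ with $\varphi(\g{t})=\g{t}$, and to show that the restriction $\psi := \varphi\rvert_\g{t}$ does two things: (a) it permutes the set of coroots $\{H_\alpha : \alpha\in\Delta_\g{k}\}$, and (b) it permutes the set of coweights $\{H_\lambda : \lambda\in\Delta_V\}$. Since $\varphi$ commutes with the $\Aut(\g{k},\cal{F})\rvert_\g{k}$-invariant inner product on $\g{k}$ (and hence $\psi$ is orthogonal on $\g{t}$), the dual transformation $\psi^*$ on $\g{t}^*$ defined by $\psi^*(\mu)=\mu\circ\psi^{-1}$ satisfies $H_{\psi^*(\mu)} = \psi(H_\mu)$ for every $\mu\in\g{t}^*$, by the very definition of $H_\mu$ via the inner product. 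So it is enough to prove that $\psi^*$ permutes $\Delta_\g{k}$ and permutes $\Delta_V$.

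First I would handle the roots. The point is that $\phi_A:=\varphi\rvert_\g{k}$ is a Lie algebra automorphism of $\g{k}$, and it maps the maximal abelian subalgebra $\g{t}$ onto itself, so it takes the root space decomposition of $\g{k}^\C$ with respect to $\g{t}^\C$ to itself: concretely, $\phi_A^\C(\g{k}_\alpha) = \g{k}_{\psi^*(\alpha)}$ for each $\alpha\in\Delta_\g{k}$, because if $X\in\g{k}_\alpha$ and $H\in\g{t}$ then $\ad(H)\phi_A^\C(X) = \phi_A^\C(\ad(\phi_A^{-1}H)X) = i\alpha(\psi^{-1}H)\phi_A^\C(X) = i(\psi^*\alpha)(H)\phi_A^\C(X)$. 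Since $\g{k}_\alpha\neq 0$ forces $\g{k}_{\psi^*(\alpha)}\neq 0$, this shows $\psi^*(\Delta_\g{k})\subseteq\Delta_\g{k}$, and applying the same to $\varphi^{-1}$ gives equality. Hence $\psi$ permutes the coroots, i.e.\ $\psi\in\Aut(\Delta_\g{k})$.

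Next I would do the weights, using the same mechanism but now for the representation $\rho$ rather than the adjoint representation. Here the key identity is the defining relation of $\phi_A$, namely $A\rho_*(X)A^{-1} = \rho_*(\phi_A(X))$ for all $X\in\g{k}$, which complexifies to $A^\C\rho_*^\C(X)(A^\C)^{-1} = \rho_*^\C(\phi_A^\C(X))$. Then, for $v\in V_\lambda$ and $H\in\g{t}$, one computes
\[
\rho_*^\C(H)\bigl(A^\C v\bigr) = A^\C\,\rho_*^\C\bigl(\phi_A^{-1}(H)\bigr)v = A^\C\bigl(i\,\lambda(\psi^{-1}H)\,v\bigr) = i\,(\psi^*\lambda)(H)\,(A^\C v),
\]
so $A^\C(V_\lambda)\subseteq V_{\psi^*(\lambda)}$, and since $A^\C$ is invertible and $V_\lambda\neq 0$, we get $\psi^*(\lambda)\in\Delta_V$. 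Running the argument with $A^{-1}$ gives $\psi^*(\Delta_V)=\Delta_V$, hence $\psi$ permutes the coweights. Combining this with the previous paragraph, $\psi = \varphi\rvert_\g{t}\in\Aut(\Delta_\g{k},\Delta_V)$, which is the claim.

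I do not expect a serious obstacle here; the statement is essentially bookkeeping built on two facts already established in the excerpt — that $\Aut(\g{k},\cal{F})\rvert_\g{k}$ is compact so that an invariant inner product exists, and that each $A\in\Aut(\cal{F})$ induces $\phi_A\in\Aut(\g{k})$ with $A\rho_*(X)A^{-1}=\rho_*(\phi_A(X))$. The only point that needs a line of care is the translation between the action of $\psi$ on $\g{t}$ and the action of $\psi^*$ on $\g{t}^*$: one must check that orthogonality of $\psi$ (which comes from the invariance of $\langle\cdot,\cdot\rangle$ under $\Aut(\g{k},\cal{F})\rvert_\g{k}$, noting $\varphi$ also preserves the orthogonal splitting $\g{t}=Z(\g{k})\oplus\g{t}'$) is exactly what makes $H_{\psi^*\mu}=\psi(H_\mu)$, so that "$\psi^*$ permutes weights/roots" and "$\psi$ permutes coweights/coroots" say the same thing. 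Everything else is a direct eigenspace computation as above.
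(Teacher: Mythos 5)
Your argument is correct and follows essentially the same route as the paper: both proofs complexify $\varphi$, compute that the root spaces $\g{k}_\alpha$ and weight spaces $V_\lambda$ are carried to $\g{k}_{\alpha\circ\varphi^{-1}}$ and $V_{\lambda\circ\varphi^{-1}}$ respectively, and then use orthogonality of $\varphi\rvert_\g{t}$ with respect to the invariant inner product to translate the permutation of roots and weights into a permutation of coroots and coweights. Your explicit attention to the identity $H_{\psi^*\mu}=\psi(H_\mu)$ is exactly the step the paper carries out inline via $\beta(H)=\langle H,\phi H_\alpha\rangle$.
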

\begin{proof}
Consider an element $\varphi\in\Aut(\g{k},\cal{F})$ with $\varphi(\g{t})=\g{t}$, and let $\phi=\varphi^\C$, which is  a linear automorphism of $\g{k}^\C\oplus V^\C$ such that $\phi\rvert_{\g{k}^\C}$ is a Lie algebra automorphism of $\g{k}^\C$. If $\alpha\in\Delta_\g{k}$ and $X\in\g{k}_\alpha$, then $[\phi H, \phi X]=\alpha(H)\phi X$ for all $H\in\g{t}^\C$, so $\phi(\g{k}_\alpha)=\g{k}_\beta$, where $\beta=\alpha\circ\phi^{-1}\rvert_{\g{t}^\C}\in\Delta_\g{k}$. Moreover, $\beta(H)=\alpha(\phi^{-1}H)=\langle\phi^{-1}H, H_\alpha\rangle=\langle H,\phi H_\alpha\rangle$ for all $H\in\g{t}$, and thus $\phi H_\alpha=H_\beta$. 
If $\lambda\in\Delta_V$ and $X\in V_\lambda$, then $\rho_*^\C(\phi H)\phi X= \lambda(H)\phi X$ for all $H\in\g{t}^\C$, so $\rho_*^\C(H)\phi X=\mu(H)\phi X$, where $\mu=\lambda\circ\phi^{-1}\rvert_{\g{t}^\C}\in\Delta_V$. Hence $\phi V_\lambda=V_\mu$ and, similarly as for the coroots, we get that $\phi H_\lambda=H_\mu$. We have thus shown that $\varphi\rvert_\g{t}=\phi\rvert_\g{t}\in\Aut(\Delta_\g{k},\Delta_V)$.
\end{proof}

We will denote by $\Aut_\cal{F}(\Delta_\g{k},\Delta_V)$ the subgroup of those automorphisms in $\Aut(\Delta_\g{k},\Delta_V)$ that are restriction of automorphisms in $\Aut(\g{k},\cal{F})$ preserving $\g{t}$, and by $\Aut_\cal{F}^\pm(\Delta_\g{k},\Delta_V)$ the group generated by $\Aut_\cal{F}(\Delta_\g{k},\Delta_V)$ and $-\Id_\g{t}$. The elements of $\Aut_\cal{F}^\pm(\Delta_\g{k},\Delta_V)$ leave $\Delta_V$ invariant; indeed $-\Id_\g{t}\in\Aut(\Delta_\g{k},\Delta_V)$ since $\rho_*^\C$ is a complex representation of real type, hence self dual. A straightforward consequence of Propositions~\ref{prop:invariant_torus} and \ref{prop:auto_roots} is the following

\begin{corollary}
If $T_1,T_2\in\cal{J}\cap\g{t}$, then $T_1\sim T_2$ if and only if there exists $\varphi\in \Aut_\cal{F}^\pm(\Delta_\g{k},\Delta_V)$ such that $\varphi T_1=T_2$.
\end{corollary}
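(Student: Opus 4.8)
The plan is to read the statement off Propositions~\ref{prop:invariant_torus} and~\ref{prop:auto_roots}, with the only genuine bookkeeping being the sign ambiguity (which is precisely why the conclusion is phrased with $\Aut_\cal{F}^\pm(\Delta_\g{k},\Delta_V)$ and not $\Aut_\cal{F}(\Delta_\g{k},\Delta_V)$). First I would record that since $-\Id_\g{t}$ is a scalar it commutes with every orthogonal transformation of $\g{t}$, so $\Aut_\cal{F}^\pm(\Delta_\g{k},\Delta_V)=\Aut_\cal{F}(\Delta_\g{k},\Delta_V)\cup(-\Id_\g{t})\,\Aut_\cal{F}(\Delta_\g{k},\Delta_V)$; in particular every element of $\Aut_\cal{F}^\pm(\Delta_\g{k},\Delta_V)$ has the form $\pm\,\psi\rvert_\g{t}$ for some $\psi\in\Aut(\g{k},\cal{F})$ preserving $\g{t}$. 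I would also recall, as already noted in the text preceding the corollary, that $-\Id_\g{t}\in\Aut(\Delta_\g{k},\Delta_V)$ because $\rho_*^\C$ is self-dual.

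For the implication ``$\Leftarrow$'': given $\varphi\in\Aut_\cal{F}^\pm(\Delta_\g{k},\Delta_V)$ with $\varphi T_1=T_2$, write $\varphi=\pm\,\psi\rvert_\g{t}$ as above. Then $\psi T_1=\pm T_2$, and Proposition~\ref{prop:invariant_torus} (its easy direction) gives $T_1\sim T_2$.

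For the implication ``$\Rightarrow$'': suppose $T_1\sim T_2$. Proposition~\ref{prop:invariant_torus} produces $\psi\in\Aut(\g{k},\cal{F})$ preserving $\g{t}$ with $\psi T_1=\pm T_2$. By Proposition~\ref{prop:auto_roots}, $\psi\rvert_\g{t}\in\Aut(\Delta_\g{k},\Delta_V)$, and by the very definition of $\Aut_\cal{F}(\Delta_\g{k},\Delta_V)$ we have $\psi\rvert_\g{t}\in\Aut_\cal{F}(\Delta_\g{k},\Delta_V)\subset\Aut_\cal{F}^\pm(\Delta_\g{k},\Delta_V)$. If $\psi T_1=T_2$, take $\varphi=\psi\rvert_\g{t}$; if $\psi T_1=-T_2$, take $\varphi=(-\Id_\g{t})\circ\psi\rvert_\g{t}\in\Aut_\cal{F}^\pm(\Delta_\g{k},\Delta_V)$, which then sends $T_1$ to $T_2$. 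In both cases $\varphi T_1=T_2$, as required.

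I do not expect any real obstacle: the entire content is in the two cited propositions, and the only subtlety — the sign coming from the ``unitary or anti-unitary'' alternative in Propositions~\ref{prop:general_foliation}–\ref{prop:invariant_torus} — is absorbed cleanly by enlarging $\Aut_\cal{F}(\Delta_\g{k},\Delta_V)$ to $\Aut_\cal{F}^\pm(\Delta_\g{k},\Delta_V)$ using the central element $-\Id_\g{t}$.
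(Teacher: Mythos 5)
Your proposal is correct and is exactly the argument the paper intends: the paper states the corollary as a ``straightforward consequence'' of Propositions~\ref{prop:invariant_torus} and~\ref{prop:auto_roots} without writing it out, and your write-up supplies precisely those two steps together with the sign absorption via $-\Id_\g{t}$. Nothing is missing.
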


Let us fix a set of simple roots $\Pi_\g{k}=\{\alpha_1,\ldots,\alpha_r\}$ for the root system of $\g{k}$.
Let $W(\Delta_\g{k})=N_K(\g{t})/Z_K(\g{t})$ be the Weyl group of the root system $\Delta_\g{k}$; we will regard $W(\Delta_\g{k})$ as a group of transformations of $\g{t}$ that leave $\g{t}'$ invariant. Since $W(\Delta_\g{k})\subset\Ad(K)\subset \Aut(\g{k},\cal{F})\rvert_\g{k}$, Proposition~\ref{prop:auto_roots} implies that $W(\Delta_\g{k})\subset\Aut(\Delta_\g{k},\Delta_V)$, i.e.\ the Weyl group action on $\g{t}$ leaves the set of coroots and the set of coweights invariant. It is known that $W(\Delta_\g{k})$ is generated by the reflections in $\g{t}$ around the hyperplanes of equation $\alpha=0$, for all $\alpha\in\Delta_\g{k}$, or even for all $\alpha\in\Pi_\g{k}$. The coroot $H_\alpha$ is a normal vector to the hyperplane $\alpha=0$.

The subset $\bar{C}$ of $\g{t}$ defined by the inequalities $\alpha\geq 0$, for every $\alpha\in\Pi_\g{k}$, constitutes a fundamental domain for the action of $W(\Delta_\g{k})$ on $\g{t}$, that is, every $W(\Delta_\g{k})$-orbit intersects $\bar{C}$ in exactly one point. The set $\bar{C}$ is the Cartesian product of the closure of a Weyl chamber of $\Delta_{\g{k}}=\Delta_{[\g{k},\g{k}]}$ in $\g{t}'$ times the center of $\g{k}$. 
We will denote by $\Out_\cal{F}^\pm(\Delta_\g{k},\Delta_V)$ (resp.\ $\Out(\Delta_\g{k})$, $\Out(\Delta_\g{k},\Delta_V)$) the subgroup of $\Aut_\cal{F}^\pm(\Delta_\g{k},\Delta_V)$ (resp.\ $\Aut(\Delta_\g{k})$, $\Aut(\Delta_\g{k},\Delta_V)$) of all automorphisms leaving $\bar{C}$ invariant, or equivalently, leaving invariant the simple coroots. 

Since $W(\Delta_\g{k})\subset \Aut_\cal{F}(\Delta_\g{k},\Delta_V)$ and $\bar{C}$ is a fundamental domain for the action of $W(\Delta_\g{k})$, the problem of understanding the partition of $\cal{J}\cap\g{t}$ in its $\sim$-equivalence classes is then reduced to determining the set $\cal{J}\cap\bar{C}$ and deciding which of its elements are $\sim$-related.

\begin{proposition}\label{prop:criterion_Out_F}
Let $T_1,T_2\in\cal{J}\cap\bar{C}$. Then $T_1\sim T_2$ if and only if there is $\varphi\in\Out_\cal{F}^\pm(\Delta_\g{k},\Delta_V)$ such that $\varphi (T_1)= T_2$. 
\end{proposition}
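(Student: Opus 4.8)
The plan is to leverage the corollary stated just before this proposition: for $T_1,T_2\in\cal{J}\cap\g{t}$, one has $T_1\sim T_2$ if and only if there is $\varphi\in\Aut_\cal{F}^\pm(\Delta_\g{k},\Delta_V)$ with $\varphi(T_1)=T_2$. So the content of Proposition~\ref{prop:criterion_Out_F} is to upgrade the existence of such a $\varphi$ to the existence of one that additionally preserves $\bar C$ (i.e.\ lies in $\Out_\cal{F}^\pm$), under the hypothesis that both $T_1$ and $T_2$ already lie in the fundamental domain $\bar C$. The sufficiency direction is immediate since $\Out_\cal{F}^\pm(\Delta_\g{k},\Delta_V)\subset\Aut_\cal{F}^\pm(\Delta_\g{k},\Delta_V)$. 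For necessity, I would start from an arbitrary $\varphi\in\Aut_\cal{F}^\pm(\Delta_\g{k},\Delta_V)$ with $\varphi(T_1)=T_2$ and modify it by an element of the Weyl group to push it into $\Out_\cal{F}^\pm$.

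The key step is this: given $\varphi$ as above, the element $\varphi(\bar C)$ is again the closure of a Weyl chamber of $\Delta_{[\g{k},\g{k}]}$ in $\g{t}'$ times $Z(\g{k})$, since $\varphi$ permutes the coroots and fixes $Z(\g{k})$ pointwise up to the $\pm$ (one must be slightly careful: elements of $\Aut_\cal{F}^\pm$ are compositions of a genuine $\Aut_\cal{F}$-element with possibly $-\Id_\g{t}$, and $-\Id_\g{t}$ maps $\bar C$ to $-\bar C=w_0(\bar C)$ composed with the longest Weyl element on the semisimple part, which is still a chamber closure; on $Z(\g{k})$ it acts as $-\Id$, but this is absorbed into the classical Weyl-chamber argument on $\g{t}'$ together with the observation that $T_1,T_2$ are being compared up to sign anyway). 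So there exists $w\in W(\Delta_\g{k})$ with $w\varphi(\bar C)=\bar C$, hence $w\varphi\in\Out^\pm(\Delta_\g{k},\Delta_V)$. Since $W(\Delta_\g{k})\subset\Aut_\cal{F}(\Delta_\g{k},\Delta_V)$ (this was established using $W(\Delta_\g{k})\subset\Ad(K)\subset\Aut(\g{k},\cal{F})\rvert_\g{k}$ together with Proposition~\ref{prop:auto_roots}), we get $w\varphi\in\Aut_\cal{F}^\pm(\Delta_\g{k},\Delta_V)\cap\Out^\pm(\Delta_\g{k},\Delta_V)=\Out_\cal{F}^\pm(\Delta_\g{k},\Delta_V)$. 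Set $\psi=w\varphi$.

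It remains to check that $\psi(T_1)=T_2$, not merely that $\psi$ lies in the right group. Here I would use that $T_1\in\bar C$ and $\psi(T_1)=w\varphi(T_1)=w(T_2)$, with $w(T_2)\in w(\bar C')$ for the appropriate chamber closure; since both $T_2$ and $\psi(T_1)=w(T_2)$ lie in $\bar C$ (the former by hypothesis, the latter because $\psi$ preserves $\bar C$ and $T_1\in\bar C$), and $\bar C$ is a fundamental domain for the $W(\Delta_\g{k})$-action — so each $W$-orbit meets $\bar C$ in exactly one point — the two must coincide. The slightly delicate point, which I expect to be the main obstacle, is the bookkeeping around the $\pm$ sign: $\varphi$ might be of the form $-\eta$ with $\eta\in\Aut_\cal{F}(\Delta_\g{k},\Delta_V)$, so $\varphi(T_1)=T_2$ really means $\eta(T_1)=-T_2$, and one has to verify that after composing with a Weyl element the resulting $\psi$ still sends $T_1$ to exactly $T_2$ (and not to $-T_2$, which lies in a different chamber in general). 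This is handled by noting that the congruence relation $\sim$, and hence the corollary we are invoking, is symmetric in $T$ and $-T$ only at the level of the original complex structures $\rho_*(X)$ and $\rho_*(-X)=-\rho_*(X)$, which induce the same Hopf map; concretely, the conclusion we want is ``$\varphi(T_1)=T_2$'' with $\varphi\in\Out_\cal{F}^\pm$, and since $\Out_\cal{F}^\pm$ already contains $-\Id_\g{t}$ composed appropriately, the sign ambiguity is exactly what $\Out_\cal{F}^\pm$ (as opposed to $\Out_\cal{F}$) is designed to absorb — so one replaces $\psi$ by $-\psi$ if necessary and uniqueness of the $\bar C$-representative in each orbit closes the argument.
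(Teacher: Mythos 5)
Your argument is correct and is essentially the paper's own proof: the paper writes $\phi=\phi'\circ\varphi$ using $\Aut(\Delta_\g{k})\cong W(\Delta_\g{k})\ltimes\Out(\Delta_\g{k})$ and then uses that $\bar C$ is a fundamental domain to force $\varphi(T_1)=T_2$, which is exactly your composition with a Weyl element $w$ followed by the uniqueness of the $\bar C$-representative. Your extended worry about the $\pm$ sign is unnecessary — since $-\Id_{\g t}\in\Aut(\Delta_\g{k})$, the same chamber argument applies verbatim and the fundamental-domain step already pins down $\psi(T_1)=T_2$ exactly — but it does not affect the validity of the proof.
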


\begin{proof}
The sufficiency of the claim is clear. Let us assume $T_1\sim T_2$. Then there is $\phi\in\Aut_\cal{F}^\pm(\Delta_\g{k},\Delta_V)$ such that $\phi(T_1)= T_2$. In particular $\phi\in\Aut(\Delta_\g{k})\cong W(\Delta_\g{k})\ltimes \Out(\Delta_\g{k})$, so we can put $\phi=\phi'\circ \varphi$, with $\phi'\in W(\Delta_\g{k})$ and $\varphi\in \Out(\Delta_\g{k})$. Then $\varphi(T_1)\in \bar{C}$ and $\phi'(\varphi(T_1))=\phi(T_1)=T_2\in\bar{C}$. But $\bar{C}$ is a fundamental domain for the action of $W(\Delta_\g{k})$, hence $\varphi(T_1)=T_2$. Finally notice that $\varphi\in \Out_\cal{F}^\pm(\Delta_\g{k},\Delta_V)$, because $\phi,\phi'\in\Aut_\cal{F}^\pm(\Delta_\g{k},\Delta_V)$.
\end{proof}

We introduce now one of the key ideas of our work, namely: the usage of certain generalizations of the so-called extended Vogan diagrams of inner symmetric spaces. This particular case will be discussed in Section~\ref{sec:homogeneous}. 

Given a complex finite dimensional representation $\eta$ of a compact Lie algebra $\g{k}$, the \emph{lowest weight diagram} of $\eta$ is constructed as follows. Consider the Dynkin diagram of $\g{k}$, where each simple root of $\Delta_\g{k}$ is represented by a white node, and draw as many black nodes as lowest weights of $\eta$, counted with multiplicity. Join each black node corresponding to a lowest weight $\lambda$ to those white nodes corresponding to roots $\alpha$ with $\langle \alpha,\lambda\rangle\neq 0$ by means of a simple line. Finally, attach to each one of these new edges the integer value $2\langle \alpha,\lambda\rangle/\langle \alpha,\alpha\rangle$ as a label; if no label is attached, we understand that the associated value is $-1$.

An automorphism (or symmetry) of a lowest weight diagram is a permutation of its nodes preserving the graph, the black nodes, and the labels of the edges between black nodes and white nodes. Having in mind the notation of this section, we will talk about the lowest weight diagram of $\rho_*^\C$ or, directly, of the foliation $\cal{F}$. The study of the symmetries of the lowest weight diagrams of certain foliations will be a useful tool in our work. The following result gives a first idea of the interest of these diagrams. 

\begin{proposition}\label{prop:auto_diagram}
Each automorphism in $\Out(\Delta_\g{k},\Delta_V)$ induces an automorphism of the lowest weight diagram of $\rho_*^\C$ in a natural way. This correspondence is injective if the set of simple roots and lowest weights generates $\g{t}^*$.
\end{proposition}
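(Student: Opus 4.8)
The plan is to unwind the definitions and exhibit the natural correspondence explicitly, then check it is well-defined and, under the stated hypothesis, injective. Let $\varphi\in\Out(\Delta_\g{k},\Delta_V)$. First I would note that $\varphi$ permutes the set of simple coroots $\{H_{\alpha_1},\dots,H_{\alpha_r}\}$ (by definition of $\Out$, it fixes $\bar C$, hence preserves its defining walls, hence permutes the simple coroots), so it induces a permutation $\sigma$ of $\{1,\dots,r\}$ with $\varphi H_{\alpha_i}=H_{\alpha_{\sigma(i)}}$, equivalently $\varphi(\alpha_i)=\alpha_{\sigma(i)}$ on $\g{t}^*$. Since an automorphism of $\Delta_\g{k}$ preserves the Cartan integers $2\langle\alpha_i,\alpha_j\rangle/\langle\alpha_j,\alpha_j\rangle$, the permutation $\sigma$ is a symmetry of the Dynkin diagram of $\g{k}$; this handles the white nodes and the edges among them.

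Next I would treat the black nodes. By definition $\varphi\in\Aut(\Delta_\g{k},\Delta_V)$ maps the set of coweights $\{H_\lambda:\lambda\in\Delta_V\}$ onto itself; I claim it also maps the set of \emph{lowest} weights onto itself, preserving multiplicities. Indeed, $\varphi\in\Aut(\Delta_\g{k})$ sends the Weyl chamber $\bar C$ to itself (again by the $\Out$ condition), hence sends the dominant chamber of $(\g{t}')^*$ to itself and the antidominant chamber to itself; since lowest weights are exactly the weights of $\Delta_V$ lying in the antidominant chamber (for the fixed choice $\Pi_\g{k}$), $\varphi$ permutes them. Because $\varphi$ acts linearly and bijectively on the finite set $\Delta_V$ and preserves weight spaces up to the induced isomorphism (cf.\ the proof of Proposition~\ref{prop:auto_roots}, where $\phi V_\lambda=V_{\lambda\circ\phi^{-1}}$), it preserves multiplicities. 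Thus $\varphi$ induces a permutation of the black nodes. Finally, for an edge joining a black node $\lambda$ to a white node $\alpha_i$, its label is $2\langle\alpha_i,\lambda\rangle/\langle\alpha_i,\alpha_i\rangle$; applying the orthogonal map $\varphi$,
\[
\frac{2\langle\alpha_{\sigma(i)},\varphi\lambda\rangle}{\langle\alpha_{\sigma(i)},\alpha_{\sigma(i)}\rangle}
=\frac{2\langle\varphi\alpha_i,\varphi\lambda\rangle}{\langle\varphi\alpha_i,\varphi\alpha_i\rangle}
=\frac{2\langle\alpha_i,\lambda\rangle}{\langle\alpha_i,\alpha_i\rangle},
\]
so labels (and in particular the presence or absence of an edge, i.e.\ whether $\langle\alpha_i,\lambda\rangle\neq 0$) are preserved. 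Hence $\varphi$ induces a genuine automorphism of the lowest weight diagram, and this assignment is clearly a group homomorphism.

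For injectivity under the extra hypothesis: suppose $\varphi\in\Out(\Delta_\g{k},\Delta_V)$ induces the identity automorphism of the diagram. Then $\varphi$ fixes every white node, i.e.\ $\varphi(\alpha_i)=\alpha_i$ for all $i$, and it fixes the black nodes; but fixing a black node only says $\varphi$ permutes weight spaces with a common weight, so I must argue that $\varphi$ actually fixes each lowest weight $\lambda\in\g{t}^*$ as a functional. Here the inducing diagram automorphism being trivial means it fixes each black node \emph{as a node}, and distinct lowest weights give distinct nodes only up to multiplicity; however, if two black nodes carry the same weight $\lambda$, then $\varphi$ fixes that weight anyway, and if they carry different weights the triviality of the induced automorphism forces $\varphi\lambda=\lambda$ for each. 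Either way $\varphi$ fixes every lowest weight and every simple root, hence fixes a spanning set of $\g{t}^*$ by hypothesis, so $\varphi=\Id_{\g{t}^*}$, i.e.\ $\varphi=\Id_\g{t}$. The main obstacle I anticipate is precisely this last point — making sure that a trivial action on the \emph{set of nodes} (where black nodes are a multiset) really forces a trivial action on the \emph{set of weights as linear functionals}, which is why the spanning hypothesis on simple roots together with lowest weights is exactly what is needed; everything else is a routine check that orthogonal maps respect inner-product data.
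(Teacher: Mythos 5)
Your overall strategy coincides with the paper's: show that $\varphi$ permutes the simple coroots (hence the white nodes and the Dynkin diagram), permutes the lowest coweights (hence the black nodes), and, being orthogonal, preserves the labels $2\langle\alpha_i,\lambda\rangle/\langle\alpha_i,\alpha_i\rangle$. Your injectivity argument is also correct, and more explicit than the paper's one-line remark. The problem lies in the one step that actually carries content, namely that $\varphi$ permutes the lowest weights. You justify this by asserting that the lowest weights are exactly the weights of $\Delta_V$ lying in the antidominant chamber. That characterization is false in general: a weight is antidominant precisely when it is the antidominant representative of its $W(\Delta_\g{k})$-orbit, whereas it is a lowest weight precisely when it is the lowest weight of an irreducible summand, and a single irreducible summand may contain several Weyl orbits of weights. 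This is not a hypothetical worry — it fails for representations appearing in this very paper. For an FKM-foliation with $m\equiv 1,7\,(\mod 8)$ and $k\geq 3$ odd, the weight $-\frac{1}{2}(\w^s_1+\dots+\w^s_p)$ (arising from the zero weight of the standard representation of $\g{so}(k)$) is antidominant, yet it is not a lowest weight, since subtracting $\w_1=\alpha_1+\dots+\alpha_q$ (a nonzero nonnegative integer combination of simple roots) lands on the weight $\lambda^+$. So the set you work with is strictly larger than the set of black nodes, and your argument does not establish that $\varphi$ preserves the latter.

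The fix is the characterization the paper actually uses: $\lambda\in\Delta_V$ is a lowest weight if and only if $\lambda-\sum_{\alpha_i\in\Pi_\g{k}}n_i\alpha_i$ (with $n_i\in\mathbb{N}\cup\{0\}$, not all zero) is never a weight. Since $\varphi$ is linear, permutes the simple coroots, and maps the set of coweights onto itself, this minimality condition is manifestly $\varphi$-invariant, which is all that is needed. A secondary point: your appeal to $\phi V_\lambda=V_{\lambda\circ\phi^{-1}}$ from Proposition~\ref{prop:auto_roots} to preserve multiplicities presupposes that $\varphi$ extends to an element of $\Aut(\g{k},\cal{F})$ acting on $V^\C$, which an abstract element of $\Out(\Delta_\g{k},\Delta_V)$ is not assumed to do; the paper avoids this by only claiming that the \emph{set} of lowest coweights is preserved.
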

\begin{proof}
Let $\varphi\in\Out(\Delta_\g{k},\Delta_V)$ and $\lambda\in\Delta_V$. 
Then $\lambda$ is a lowest weight of $\rho_*^\C$ if and only if $\lambda-\sum_{\alpha_i\in\Pi_\g{k}}n_i\alpha_i$ (with all $n_i\in\mathbb{N}\cup\{0\}$) is not a weight unless all $n_i$ vanish. 
Since $\varphi(H_\lambda - \sum_{\alpha_i\in\Pi_\g{k}}n_i H_{\alpha_i})=\varphi(H_\lambda) - \sum_{\alpha_i\in\Pi_\g{k}}n_i\varphi(H_{\alpha_i})$ and $\varphi$ preserves the set of simple coroots and the set of coweights, we get that $\varphi$ preserves the set of lowest coweights of $\rho_*^\C$. 
As moreover $\varphi$ is an orthogonal transformation of $\g{t}$, we conclude that $\varphi$ induces a symmetry of the lowest weight diagram of $\rho_*^\C$.
The last assertion of the statement is immediate.
\end{proof}

\section{Projecting homogeneous polar foliations} \label{sec:homogeneous}
Our goal in this section is to classify isoparametric foliations on complex projective spaces obtained by projection of homogeneous polar foliations in spheres. In \S\ref{subsec:complex_structures_homogeneous} we characterize the homogeneous polar foliations that can be projected to the complex projective space and determine the complex structures that can be used with that end. In \S\ref{subsec:congruence_homogeneous} we investigate the congruence of the corresponding projected foliations. In \S\ref{subsec:classification_homogeneous} we derive the classification.

First of all, we recall the notion of inner symmetric space, we introduce some known facts about Vogan diagrams and the Borel-de Siebenthal theory, and we fix some notation that will be used throughout this section.

Let $(G,K)$ be a symmetric pair, $\g{g}=\g{k}\oplus\g{p}$ its Cartan decomposition, and $\theta$ the corresponding Cartan involution. Then $G/K$ (or $(G,K)$) is said to be \emph{inner} or \emph{equal-rank} if its involution $\theta$ is inner. This happens exactly when $\g{g}$ and $\g{k}$ have equal rank \cite[Ch.~IX, Th.~5.6]{He78}, or when the Euler characteristic of $G/K$ is nonzero \cite[Ch.~XI, Th.~VII]{GHV76}. 

For the study of Vogan diagrams and Borel-de Siebenthal theory, we refer to \cite[\S VI.8--10 and Appx. C.3--4]{Kn02} (where the pictures of Vogan diagrams can be found) and \cite[Ch.~8]{GG78}. Here we give a quick overview for the particular case of inner symmetric spaces.

Assume that $(G,K)$ is an inner compact symmetric pair. Then a maximal abelian subalgebra $\g{t}$ of $\g{k}$ is also a maximal abelian subalgebra of $\g{g}$. Let $\Delta_\g{g}$ be the root system of $\g{g}$ with respect to $\g{t}$, and let $\g{g}^\C=\g{t}^\C\oplus\bigoplus_{\alpha\in \Delta_\g{g}}\g{g}_\alpha$ be the root space decomposition.
For every $\alpha\in\Delta_\g{g}$, either $\g{g}_\alpha\subset \g{k}^\C$ or $\g{g}_\alpha\subset \g{p}^\C$ holds. In the first case we say that $\alpha$ is \emph{compact}; in the second case, $\alpha$ is \emph{noncompact}. This terminology is motivated by the consideration of the real semisimple Lie algebra $\g{k}\oplus i\g{p}$ (see \cite[p.~390]{Kn02}). 

Choose a set $\Pi_\g{g}$ of simple roots for $\Delta_\g{g}$. The \emph{Vogan diagram} of the inner orthogonal symmetric pair $(\g{g},\g{k})$ with respect to $\g{t}$ and $\Pi_\g{g}$ is the Dynkin diagram of $\Pi_\g{g}$ with its nodes painted or not, depending on whether the corresponding simple root is noncompact or compact. 

There can be several Vogan diagrams corresponding to the same orthogonal symmetric pair. This redundancy is eliminated by the Borel-de Siebenthal theorem (see \cite[Th. 6.96]{Kn02}). For our purposes, what this result asserts is the following: given an irreducible inner compact orthogonal symmetric pair $(\g{g},\g{k})$ and given $\g{t}$ as above, there is a set of simple roots for $\Delta_\g{g}$ whose corresponding Vogan diagram has exactly one painted node. That is, we can assume that there is a set $\Pi_\g{g}$ of simple roots for $\Delta_\g{g}$ with precisely one noncompact root.
Furthermore, the set $\Delta_\g{k}$ of compact roots corresponds to the root system of $\g{k}$ and is a root subsystem of $\Delta_\g{g}$, whereas the noncompact roots are exactly the weights of the adjoint $\g{k}^\C$-representation on $\g{p}^\C$.

Applying the Borel-de Siebenthal theorem, we fix a set of simple roots $\Pi_\g{g}=\{\alpha_1,\ldots,\alpha_p\}$ for $\Delta_\g{g}$, where $\alpha_\nu$ is noncompact, for certain $\nu\in\{1,\ldots,p\}$, and the other simple roots are compact. Let $\{\h_1,\ldots,\h_p\}\subset \g{t}$ be the dual basis of $\Pi_\g{g}$.

Let $\mu=\sum_{i=1}^p y_i \alpha_i$ ($y_i\in\mathbb{N}$) be the highest root of $\Delta_\g{g}$ and put $\alpha_0=-\mu$. Then $G/K$ is Hermitian if and only if $y_\nu=1$; otherwise, $y_\nu=2$ (see Table~\ref{table:diagrams}). If $G/K$ is Hermitian, we will consider $\Pi_\g{k}=\{\alpha_1,\ldots,\alpha_{p}\}\setminus\{\alpha_\nu\}$ as a set of simple roots for $\Delta_\g{k}$. In this case, if $\g{t}'$ is the hyperplane of $\g{t}$ generated by the compact coroots, its normal space with respect to the Killing form $B_\g{g}$ is $\R\h_\nu$. If $G/K$ is not Hermitian, we will take $\Pi_\g{k}=\{\alpha_0,\alpha_1,\ldots,\alpha_p\}\setminus\{\alpha_\nu\}$ as a set of simple roots for $\Delta_\g{k}$. For a justification of these choices, see \cite[Ch.~8]{GG78}.

An enumeration of the roots in $\Delta_\g{g}$ shows that there is a unique highest noncompact root $\lambda$, in the following sense: $\lambda$ is the unique noncompact root that is greater than any other noncompact root, according to the lexicographic ordering defined by the set of simple roots $\Pi_\g{g}$. Notice that, if $\lambda=\sum_{i=1}^p z_i \alpha_i$, one has that $z_\nu=1$ and $z_i>0$ for every $i$ (see Table~\ref{table:diagrams}, cf.~\cite[Appx. C.1,2,4]{Kn02}).

\subsection{Complex structures preserving homogeneous polar foliations}\label{subsec:complex_structures_homogeneous}
Given a compact symmetric pair $(G,K)$ satisfying the maximality property, we will denote by $\mathcal{F}_{G/K}$ the orbit foliation of the $s$-representation of $G/K$ restricted to the unit sphere of $\g{p}$, and we will refer to it as the foliation determined by $G/K$ (or by $(G,K)$). The theory developed in Section~\ref{sec:foliations} applies to these foliations $\cal{F}_{G/K}$, where $V=\g{p}$ and $\rho=\Ad\colon K\to\O(\g{p})$.
The following result completely characterizes those $s$-representations whose orbit foliations can be projected to the corresponding complex projective space.

\begin{theorem}\label{th:reduction}
Let $(G,K)$ be a compact symmetric pair satisfying the maximality property, $\g{g}=\g{k}\oplus\g{p}$ its Cartan decomposition, and $\g{t}$ a maximal abelian subalgebra of $\g{k}$. 
Then there exists a complex structure in $\g{p}$ preserving $\mathcal{F}_{G/K}$ if and only if $G/K$ is inner.

In this situation, let $T\in\g{t}$. Then $\ad(T)\rvert_\g{p}$ is a complex structure in $\g{p}$ preserving $\mathcal{F}_{G/K}$ if and only if $\alpha(T)\in\{\pm 1\}$ for all (positive) noncompact roots $\alpha$ of $\g{g}$.
\end{theorem}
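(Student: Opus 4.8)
The plan is to apply the general machinery of Section~\ref{sec:foliations} to the special case $V=\g{p}$, $K$ the isotropy group, and $\rho=\Ad\colon K\to\O(\g{p})$. By Theorem~\ref{th:automPolar} (together with the maximality property), $\rho(K)$ is indeed the identity component of $\Aut(\cal{F}_{G/K})$, so it is the maximal connected group of orthogonal transformations sending each leaf onto itself, and hence the hypotheses of Proposition~\ref{prop:complex_structure} are satisfied. Part~(i) of that proposition says that a complex structure $J$ on $\g{p}$ preserves $\cal{F}_{G/K}$ if and only if $J=\ad(X)\rvert_\g{p}$ for some $X\in\g{k}$, and part~(ii) lets us conjugate $X$ into the fixed maximal abelian subalgebra $\g{t}$; thus such a $J$ exists if and only if there is some $T\in\g{t}$ with $\ad(T)\rvert_\g{p}$ a complex structure.

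Next I would invoke Proposition~\ref{prop:plus_minus_1}: for $T\in\g{t}$, the skew-symmetric transformation $\ad(T)\rvert_\g{p}$ is a complex structure precisely when $\lambda(T)\in\{\pm 1\}$ for every weight $\lambda$ of the complexified isotropy representation $\ad^\C\colon\g{k}^\C\to\g{gl}(\g{p}^\C)$. Here is where the structure theory from the preamble of this section enters. If $G/K$ is inner, then $\g{t}$ is maximal abelian in $\g{g}$ as well, and the weights of $\g{k}^\C$ on $\g{p}^\C$ are exactly the noncompact roots of $\Delta_\g{g}$; so the condition becomes $\alpha(T)\in\{\pm1\}$ for every noncompact root $\alpha$, and since $\alpha(-T)=-\alpha(T)$ it is equivalent to demand this for the positive noncompact roots only. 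This already gives the second assertion of the theorem, once we know the existence problem has an affirmative answer in the inner case. For existence, one can exhibit an explicit $T$: take $T=\h_\nu$ (the dual basis vector to the noncompact simple root $\alpha_\nu$) rescaled appropriately — since every noncompact root $\alpha=\sum z_i\alpha_i$ has $z_\nu\in\{1,2\}$, the value $\alpha(\h_\nu)=z_\nu$ is $1$ or $2$, which is not yet good enough, so the honest choice is to use the $\R\h_\nu$-component in the Hermitian case (where $z_\nu=1$ always, giving $\alpha(\h_\nu)=1$) and in the non-Hermitian case to appeal to the fact, recorded just before this subsection, that the highest noncompact root $\lambda=\sum z_i\alpha_i$ has $z_i>0$ for all $i$ and $z_\nu=1$; solving the linear system $\alpha_i(T)=\lambda(T)$-adapted values to force $\alpha(T)=\pm1$ on noncompact roots is a finite check per Dynkin type, or better, one uses Table~\ref{table:diagrams} to read off a witness directly.

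For the converse direction — if $G/K$ is \emph{not} inner, no such $T$ exists — the cleanest route is: when $G/K$ is not inner, $\rank\g{k}<\rank\g{g}$, and $\g{t}$ is a maximal torus of $\g{k}$ but not of $\g{g}$; the weight $0$ occurs in the decomposition of $\g{p}^\C$ under $\g{t}^\C$ (the zero weight space is the $\g{t}$-centralizer of $\g{p}^\C$, which is nontrivial precisely because $\g{t}$ fails to be maximal abelian in $\g{g}$). But if $0\in\Delta_{\g{p}}$, then $0(T)=0\notin\{\pm1\}$ for every $T\in\g{t}$, so Proposition~\ref{prop:plus_minus_1} rules out any complex structure of the form $\ad(T)\rvert_\g{p}$, and by Proposition~\ref{prop:complex_structure}(i)--(ii) there is then no complex structure at all preserving $\cal{F}_{G/K}$. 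Conversely, one checks that in the inner case $0$ is not a weight of $\g{p}^\C$ (all noncompact roots are nonzero). I expect the main obstacle to be the \emph{existence} half in the inner non-Hermitian case: verifying that the system $\alpha(T)\in\{\pm1\}$ over all positive noncompact roots actually admits a solution $T\in\g{t}$, which is not automatic from $\alpha_\nu$ alone and genuinely requires either a uniform argument via the affine Dynkin diagram (the labels $z_i$ of the lowest/highest noncompact root governing which nodes get value $+1$ versus $-1$) or a case-by-case inspection using the extended Vogan diagrams of Table~\ref{table:diagrams}.
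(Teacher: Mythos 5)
Your overall strategy coincides with the paper's: reduce via Proposition~\ref{prop:complex_structure} and Proposition~\ref{prop:plus_minus_1} to the condition $\lambda(T)\in\{\pm1\}$ on the weights of $\ad^\C$ on $\g{p}^\C$, observe that $0$ is a weight exactly when $G/K$ fails to be inner (the zero weight space being the $\g{a}^\C$ from a maximally compact Cartan subalgebra $\g{t}\oplus\g{a}$), and identify the weights with the noncompact roots in the inner case. All of that is correct and is what the paper does.

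The gap is in the existence half for the inner case, which you yourself flag as ``the main obstacle'' and then do not close. Your worry rests on an incorrect computation: you assert that a noncompact root $\alpha=\sum_i m_i\alpha_i$ can have $m_\nu=2$, so that $\alpha(\h_\nu)\in\{1,2\}$. In fact $m_\nu=2$ forces $\alpha$ to be \emph{compact}. The uniform argument the paper uses is: since $[\g{g}_\alpha,\g{g}_\beta]=\g{g}_{\alpha+\beta}$ and $[\g{k},\g{k}]\subset\g{k}$, $[\g{k},\g{p}]\subset\g{p}$, $[\g{p},\g{p}]\subset\g{k}$, the sum $\alpha+\beta$ is compact iff $\alpha,\beta$ are both compact or both noncompact; as $\alpha_\nu$ is the unique noncompact simple root, a positive root is noncompact iff its coefficient $m_\nu$ is \emph{odd}. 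Combined with the fact (recorded in the preamble and which you quote) that the highest noncompact root has $z_\nu=1$, every positive noncompact root has $m_\nu=1$, so $T=\h_\nu$ satisfies $\alpha(T)=1$ for all of them --- no rescaling, no distinction between Hermitian and non-Hermitian, and no case-by-case inspection of Table~\ref{table:diagrams} is needed (only the reduction to irreducible factors via Proposition~\ref{prop:complex_structure}(iii)). As written, your proposal leaves the existence claim unproved, and the ``finite check per Dynkin type'' you offer as a fallback is exactly the step the parity observation renders unnecessary.
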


\begin{proof}
If $G/K$ is not inner, the centralizer of $\g{t}$ in $\g{g}$ is a maximal abelian subalgebra of $\g{g}$ of the form $\g{t}\oplus\g{a}$ with $0\neq \g{a}\subset\g{p}$. Then $0$ is a weight of the adjoint $\g{k}^\C$-representation $\rho_*^\C$, with weight space $\g{a}^\C$. By Proposition~\ref{prop:plus_minus_1}, $\cal{F}_{G/K}$ cannot be projected under any Hopf map.

Assume that $G/K$ is inner. The weights of $\rho_*^\C$ are the noncompact roots of $\g{g}$. Again by Proposition~\ref{prop:plus_minus_1}, $\ad(T)\rvert_\g{p}$ is a complex structure in $\g{p}$ if and only if $\alpha(T)\in\{\pm 1\}$ for all noncompact roots of $\g{g}$ or, equivalently, for all positive noncompact roots (for a fixed ordering of $\Delta_\g{g}$).
We still have to show that such $T\in\g{t}$ exists if $G/K$ is inner. According to Proposition \ref{prop:complex_structure}(iii), it is enough to show this for irreducible symmetric pairs $(G,K)$.

So let $G/K$ be inner and irreducible. 
If $\alpha,\beta\in\Delta_\g{g}$ are such that $\alpha+\beta\neq 0$, then $[\g{g}_\alpha,\g{g}_\beta]=\g{g}_{\alpha+\beta}$. Since $[\g{k},\g{k}]\subset \g{k}$, $[\g{k},\g{p}]\subset \g{p}$, $[\g{p},\g{p}]\subset \g{k}$, then $\alpha+\beta$ is compact if and only if $\alpha$ and $\beta$ are both compact or both noncompact. 

Hence, the positive noncompact roots are exactly those roots $\alpha=\sum_{j=1}^p m_j \alpha_j$, with odd $m_\nu$ (recall the notation introduced just before this subsection). 
But since the highest noncompact root $\lambda=\sum_{i=1}^p z_i \alpha_i$ satisfies $z_\nu=1$, for positive noncompact roots we always have $m_\nu=1$. Taking $T=\h_\nu$, then $\alpha(T)=1$ for every noncompact positive root, and hence $\ad(T)\rvert_\g{p}$ is a complex structure preserving $\cal{F}_{G/K}$.
\end{proof}

Fix now an irreducible inner compact symmetric pair $(G,K)$ satisfying the maximality property, and take a maximal abelian subalgebra $\g{t}$ of $\g{g}$ contained in $\g{k}$. As defined in \S\ref{subsec:congruence}, let $\cal{J}$ be the subset of those $X\in\g{k}$ such that $\rho_*(X)=\ad(X)\rvert_\g{p}$ is a complex structure on $\g{p}$, and let $\bar{C}\subset \g{t}$ be defined by the inequalities $\alpha\geq 0$, for every $\alpha\in\Pi_\g{k}$. We can now provide a complete description of the set $\cal{J}\cap \bar{C}$.

\begin{lemma}\label{lemma:integers}
In the above conditions, let $\mu=\sum_{i=1}^p y_i \alpha_i$ be the highest root, and $\lambda=\sum_{i=1}^p z_i \alpha_i$ the highest noncompact root. We have:
\begin{itemize}
\item[(i)] If $G/K$ is not Hermitian, then $\mathcal{J}\cap\bar{C}=\{-\h_\nu\}\cup\{-\h_\nu+2\h_i: i\in\mathcal{I}\}$, where $\mathcal{I}$ is the set of indices $i\in\{1,\ldots,p\}\setminus\{\nu\}$ such that $y_i=z_i=1$.
\item[(ii)] If $G/K$ is Hermitian, then $\mathcal{J}\cap\bar{C}=\{\pm\h_\nu\}\cup\{-\h_\nu+2\h_i: i\in\mathcal{I}\}$, where $\mathcal{I}$ is the set of indices $i\in\{1,\ldots,p\}\setminus\{\nu\}$ such that $y_i=1$.
\end{itemize}
\end{lemma}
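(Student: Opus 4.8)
The plan is to translate the complex-structure condition from Theorem~\ref{th:reduction} into explicit linear inequalities on $\bar{C}$, and then solve the resulting finite system. By Theorem~\ref{th:reduction}, an element $T\in\g{t}$ gives a complex structure $\ad(T)\rvert_\g{p}$ preserving $\cal{F}_{G/K}$ exactly when $\alpha(T)\in\{\pm 1\}$ for every positive noncompact root $\alpha$. Recall that a positive root $\alpha=\sum_j m_j\alpha_j$ is noncompact iff $m_\nu$ is odd, and since the highest noncompact root $\lambda=\sum z_i\alpha_i$ has $z_\nu=1$, in fact every positive noncompact root has $m_\nu=1$. First I would observe that, since every $T\in\bar{C}$ satisfies $\alpha_i(T)\geq 0$ for all simple compact roots, the value $\alpha(T)$ on a positive noncompact root $\alpha$ is a nonnegative combination of the $\alpha_i(T)$ (with $i\neq\nu$, coefficients $m_i\geq 0$) plus $\alpha_\nu(T)$, which need not be nonnegative. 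So on $\bar{C}$ the condition ``$\alpha(T)\in\{\pm1\}$ for all positive noncompact $\alpha$'' forces in particular $\alpha(T)\geq -1$, hence (comparing with $\lambda$, the unique maximal noncompact root) one expects the only possible values to be organized around the extremes $T=\pm\h_\nu$.

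The key computational steps are as follows. Write $T=\sum_i c_i\h_i$ in the dual basis of $\Pi_\g{g}$, so $\alpha_i(T)=c_i$; membership in $\bar{C}$ means $c_i\geq 0$ for $i\neq\nu$ (in the non-Hermitian case one also needs $\alpha_0(T)=-\mu(T)=-\sum y_ic_i\geq 0$, which after fixing $c_\nu$ bounds the other $c_i$ from above). Evaluating on $\alpha_\nu$ itself (a positive noncompact root) gives $c_\nu=\alpha_\nu(T)\in\{\pm1\}$. Evaluating on $\lambda$ gives $\sum z_ic_i\in\{\pm1\}$; since $z_i>0$ for all $i$ and $z_\nu=1$, once $c_\nu$ is fixed this is a tight constraint on the remaining $c_i\geq 0$. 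The heart of the argument is a case analysis on $c_\nu$:

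\emph{Case $c_\nu=-1$.} Then $\lambda(T)=-1+\sum_{i\neq\nu}z_ic_i\in\{\pm1\}$, so $\sum_{i\neq\nu}z_ic_i\in\{0,2\}$. If the sum is $0$, then $c_i=0$ for all $i\neq\nu$ (as $z_i>0$, $c_i\geq0$), giving $T=-\h_\nu$; one then checks every positive noncompact root takes value $-1$ there, which is immediate since such a root has $m_\nu=1$ and its coefficient of $-\h_\nu$ contributes $-1$ while the $m_i\geq0$ terms contribute $0$. If $\sum_{i\neq\nu}z_ic_i=2$, I claim the only solutions are $c_i=2$ for a single index $i\in\mathcal I$ and $c_j=0$ otherwise: indeed if some $c_i$ is a nonzero value not equal to $2$, or two coordinates are nonzero, one produces a noncompact root on which $T$ takes a value outside $\{\pm1\}$ — here is where one must use that roots of the form $\alpha_i$, $\alpha_i+\alpha_j$, etc., sum up to honest roots, and exploit the constraint coming from $\mu$ versus $\lambda$ (the definition $y_i=z_i=1$, resp. $y_i=1$, in the statement is exactly the condition ensuring that $-\h_\nu+2\h_i\in\bar{C}$ and that $\alpha_i+\lambda$-type roots do not force a bad value). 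This yields the family $\{-\h_\nu+2\h_i:i\in\mathcal I\}$.

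\emph{Case $c_\nu=+1$.} In the non-Hermitian case $y_\nu=2$, and $\mu(T)=2c_\nu+\sum_{i\neq\nu}y_ic_i\geq 2$, contradicting $\alpha_0(T)=-\mu(T)\geq 0$; so $c_\nu=+1$ is impossible and part~(i) follows. In the Hermitian case $y_\nu=1$, so $\alpha_0\notin\Pi_\g{k}$ and there is no such obstruction; instead, every positive noncompact root has the form $\mu-(\text{sum of compact roots with }m_\nu=0)$, i.e. it equals $\mu$ minus a nonnegative combination of compact simple roots, so on $\bar{C}$ with $c_\nu=1$ one gets $\alpha(T)\leq\mu(T)$ and $\alpha(T)\geq c_\nu=1$; hence all these values are forced into $\{\pm1\}$ to be exactly $+1$, which pins down $T=\h_\nu$ (any extra positive $c_i$ would push some value above $1$). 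Together with the $c_\nu=-1$ analysis this gives part~(ii).

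\textbf{Main obstacle.} The routine part is setting up coordinates and evaluating on $\alpha_\nu$ and $\lambda$; the genuinely delicate step is the ``only if'' direction in the subcase $\sum_{i\neq\nu}z_ic_i=2$ — ruling out all configurations other than a single coordinate equal to $2$, and doing so uniformly for all inner types. The right tool is the bracket relation $[\g{g}_\alpha,\g{g}_\beta]=\g{g}_{\alpha+\beta}$ (used already in the proof of Theorem~\ref{th:reduction}) to manufacture, from a hypothetical bad $T$, an actual noncompact root $\alpha$ with $\alpha(T)\notin\{\pm1\}$; one should phrase this intrinsically in terms of the coefficients $y_i,z_i$ so that it does not require going through the classification case by case, though a short appeal to the tables (Table~\ref{table:diagrams}) for the finitely many types with $|\mathcal I|>1$ may be the cleanest way to close it.
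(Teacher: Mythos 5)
Your overall strategy coincides with the paper's: write $T=\sum_i c_i\h_i$, use $\alpha_\nu(T)=c_\nu=\pm1$, use $\lambda(T)=\pm1$ together with $c_i\ge0$ to kill the case $c_\nu=+1$ (outside the Hermitian $T=\h_\nu$), and check membership of the candidates $-\h_\nu+2\h_i$ via the highest root $\mu$ (for $\bar C$) and the highest noncompact root $\lambda$ (for $\mathcal J$). Those parts are correct and essentially identical to the paper.

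The gap is exactly where you flag it, and it is a real one: in the subcase $c_\nu=-1$, $\sum_{i\neq\nu}z_ic_i=2$, the constraints coming from $\alpha_\nu$ and $\lambda$ alone do \emph{not} rule out, say, two coordinates $c_i=c_j=1$ with $z_i=z_j=1$, and your proposed tools do not produce the roots needed to exclude such configurations. Sums like $\alpha_i+\alpha_j$ of compact simple roots are compact, hence irrelevant, and $\lambda+\alpha_i$ is never a root since $\lambda$ is the highest noncompact root; what is needed are noncompact roots strictly \emph{between} $\alpha_\nu$ and $\lambda$. The paper supplies precisely this: by case-by-case inspection of the root tables it exhibits a chain of positive noncompact roots $\beta_1=\alpha_\nu,\beta_2,\dots,\beta_p$, each with coefficients in $\{0,1\}$, whose supports grow by exactly one index at each step until $\beta_p$ has all coefficients equal to $1$. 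Evaluating $T$ along this chain, the values form a nondecreasing sequence in $\{\pm1\}$ starting at $-1$ and increasing by $c_{j}\ge0$ at each step, which forces all $c_j$ ($j\neq\nu$) to vanish except at most one equal to $2$. Without establishing the existence of such a chain (or an equivalent family of intermediate noncompact roots), your claim that any other configuration "produces a noncompact root with a bad value" is unproved, and an intrinsic derivation from $[\g{g}_\alpha,\g{g}_\beta]=\g{g}_{\alpha+\beta}$ alone is not obviously available; the paper resorts to the classification at exactly this point.
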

\begin{proof}
Let us first prove some auxiliary results. For that, let $T=\sum_{i=1}^p x_i \h_i\in\mathcal{J}\cap\bar{C}$.

The condition $T\in\mathcal{J}$ implies that $\alpha(T)=\pm 1$ for all positive noncompact roots $\alpha$. Since $\alpha_\nu$ is noncompact, we get that $x_\nu=\alpha_\nu(T)=\pm 1$. 

As $T\in\bar{C}$ and $\lambda$ is noncompact, we must have $x_i=\alpha_i(T)\geq 0$ for all $i\in\{1,\ldots, p\}\setminus\{\nu\}$ and $\lambda(T)=\pm 1$. Hence $x_\nu=1$ implies that $x_i=0$ for every $i\in\{1,\ldots, p\}\setminus\{\nu\}$. 

From the information in \cite[Appx. C.1,2,4]{Kn02}), one can carry out a case-by-case analysis that shows the following. There exists a sequence of positive noncompact roots $\beta_1,\ldots,\beta_p$, with $\beta_1=\alpha_\nu$ and such that, if we express each $\beta_i$ as a linear combination of the simple roots, $\beta_i=\sum_{j=1}^p m_{ij} \alpha_j$, then each coefficient $m_{ij}$ is either $1$ or $0$, and the number of $1$-coefficients increases by one from $\beta_{i}$ to $\beta_{i+1}$, for every $i\in\{1,\ldots, p-1\}$ (in particular, the coefficients of $\beta_p$ are all $1$).

The assumptions $x_\nu=-1$, $x_i\geq 0$ for every $i\neq\nu$, and $\beta_i(T)=\pm 1$ for every $i$ imply then that $x_i=0$ for all $i\neq\nu$ except for at most one index $j$, for which $x_j=2$. 

Now we prove (i). Assume that $G/K$ is not Hermitian and, thus, $y_\nu=2$. Since in the non-Hermitian case $-\mu\in\Pi_\g{k}$, we get that $-\h_\nu\in\mathcal{J}\cap\bar{C}$ and $\h_\nu\notin\bar{C}$. As $\lambda$ is the highest noncompact root, we see that $-\h_\nu+2\h_i\in\mathcal{J}$ if and only if $z_i=1$. Moreover, $-\h_\nu+2\h_i\in\bar{C}$ if and only if $-y_\nu+2y_i\leq 0$. Since $y_\nu=2$, this condition is equivalent to $y_i=1$. 

In the Hermitian case $\pm\h_\nu\in\mathcal{J}\cap\bar{C}$. Since now $\lambda=\mu$ and the simple roots for $\Delta_\g{k}$ are just $\{\alpha_1,\ldots,\alpha_p\}\setminus\{\alpha_\nu\}$, then $-\h_\nu+2\h_i\in\mathcal{J}\cap\bar{C}$ if and only if $y_i=1$, and (ii) follows.
\end{proof}

\subsection{Congruence of the projections of homogeneous polar foliations}\label{subsec:congruence_homogeneous}
For the case of homogeneous polar foliations it is possible to refine the results of \S\ref{subsec:congruence}. This is the aim of this subsection. The criteria developed will be used in \S\ref{subsec:classification_homogeneous} to obtain the classification of isoparametric foliations on $\C P^n$ obtained from homogeneous polar foliations.

According to Proposition~\ref{prop:general_foliation} and Remark~\ref{rem:foliation_determines_space}, it is impossible that different compact orthogonal symmetric pairs satisfying the maximality property give rise to congruent foliations on a complex projective space. That is why we will focus on analyzing the congruence of foliations arisen from a fixed symmetric space.

Therefore, we fix a compact symmetric pair $(G,K)$ satisfying the maximality property and with Cartan decomposition $\g{g}=\g{k}\oplus\g{p}$. In view of Theorem \ref{th:reduction}, we can assume that $G/K$ is inner. We also fix a maximal abelian subalgebra $\g{t}$ of $\g{g}$ contained in $\g{k}$, and we let $\cal{J}$, $\sim$, and $\bar{C}$ be as in \S\ref{subsec:congruence}. Our aim is to determine the $\sim$-equivalence classes of $\mathcal{J}$. 

Theorem \ref{th:automPolar} implies that $\Aut(\g{k},\cal{F})$ is precisely the group $\Aut(\g{g},\g{k})$ of automorphisms of $\g{g}$ that restrict to automorphisms of $\g{k}$. Therefore, Proposition~\ref{prop:invariant_torus} now reads as follows.

\begin{proposition}
Let $T_1,T_2\in\mathcal{J}\cap\g{t}$. Then $T_1\sim T_2$ if and only if there exists an automorphism $\varphi\in \Aut(\g{g},\g{k})$ leaving $\g{t}$ invariant and such that $\varphi T_1=\pm T_2$.
\end{proposition}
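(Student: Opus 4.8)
The plan is to deduce this Proposition as an immediate specialization of Proposition~\ref{prop:invariant_torus}, once the abstract group $\Aut(\g{k},\cal{F})$ occurring there is identified with $\Aut(\g{g},\g{k})$. In the present situation $V=\g{p}$ and $\rho=\Ad\colon K\to\O(\g{p})$, so $\g{k}\oplus V=\g{k}\oplus\g{p}=\g{g}$, and the linear maps of $\g{g}$ that make up $\Aut(\g{k},\cal{F})$ are, by definition, those $\varphi_A$ with $\varphi_A\rvert_\g{p}=A$ for $A\in\Aut(\cal{F}_{G/K})$ and $\varphi_A\rvert_\g{k}=\phi_A$, where $\phi_A\in\Aut(\g{k})$ is the (unique, by effectiveness of $\rho$) automorphism satisfying $A\,\rho_*(X)\,A^{-1}=\rho_*(\phi_A(X))$, i.e.\ $\ad(\phi_A(X))\rvert_\g{p}=A\,\ad(X)\rvert_\g{p}\,A^{-1}$ for all $X\in\g{k}$.

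First I would observe that this $\varphi_A$ is precisely the automorphism of $\g{g}$ that the proof of Theorem~\ref{th:automPolar} attaches to $A$: there one constructs $\varphi_A\in\Aut(\g{g},\g{k})$ with $\varphi_A\rvert_\g{p}=A$ and $\varphi_A\rvert_\g{k}$ equal to the automorphism of $\g{k}$ characterized by the very same relation $\ad(\varphi_A X)\rvert_\g{p}=A\,\ad(X)\rvert_\g{p}\,A^{-1}$. Hence the two maps agree on $\g{k}$ and on $\g{p}$, so they coincide, and therefore $\Aut(\g{k},\cal{F})=\Aut(\g{g},\g{k})$ as groups of linear transformations of $\g{g}$; equivalently, the restriction isomorphism of Theorem~\ref{th:automPolar} intertwines the two descriptions. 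I would also note that any $\varphi$ in this group which preserves $\g{t}\subset\g{k}$ automatically preserves the Cartan decomposition $\g{g}=\g{k}\oplus\g{p}$, so the conditions ``$\varphi$ leaves $\g{t}$ invariant'' and ``$\varphi T_1=\pm T_2$'' are meaningful inside $\g{t}$.

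With this identification in hand, the Proposition is exactly Proposition~\ref{prop:invariant_torus} applied to $\cal{F}=\cal{F}_{G/K}$: for $T_1,T_2\in\cal{J}\cap\g{t}$ one has $T_1\sim T_2$ if and only if there is $\varphi\in\Aut(\g{k},\cal{F})$ with $\varphi(\g{t})=\g{t}$ and $\varphi T_1=\pm T_2$, and substituting $\Aut(\g{g},\g{k})$ for $\Aut(\g{k},\cal{F})$ gives the claim. The only point requiring any care --- and the closest thing to an obstacle --- is checking that the $\varphi_A$ of \S\ref{subsec:congruence} and the $\varphi_A$ built in Theorem~\ref{th:automPolar} are genuinely the same map, rather than merely two maps sharing a restriction to $\g{p}$; this is ensured by the effectiveness of $(G,K)$, exactly as in the proof that the restriction map of Theorem~\ref{th:automPolar} is injective. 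Everything else is a routine translation of notation.
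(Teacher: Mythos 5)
Your proposal is correct and follows exactly the paper's route: the paper likewise obtains this proposition by invoking Theorem~\ref{th:automPolar} to identify $\Aut(\g{k},\cal{F})$ with $\Aut(\g{g},\g{k})$ and then restating Proposition~\ref{prop:invariant_torus} in that language. Your additional check that the $\varphi_A$ of \S\ref{subsec:congruence} coincides with the one built in Theorem~\ref{th:automPolar} (via effectiveness) is exactly the point the paper leaves implicit, so the argument matches in substance and in method.
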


The negative $-B_\g{g}$ of the Killing form of $\g{g}$ is a positive definite $\Aut(\g{g},\g{k})$-invariant inner product on $\g{g}$, so it can play the role of the inner product $\langle\cdot,\cdot\rangle$ considered in \S\ref{subsec:congruence}.
The set $\Delta_V=\Delta_\g{p}$ of weights of the adjoint $\g{k}^\C$-representation on $\g{p}^\C$ is precisely $\Delta_\g{g}\setminus\Delta_\g{k}$. Hence, the group $\Aut(\Delta_\g{k},\Delta_V)$ defined in \S\ref{subsec:congruence} is now the group of automorphisms of the root system $\Delta_\g{g}$ that are automorphisms of the root subsystem $\Delta_\g{k}$. In this section, we denote this group by $\Aut(\Delta_\g{g},\Delta_\g{k})$. Then we have:

\begin{proposition}
The restriction to $\g{t}$ of every element of $\Aut(\g{g},\g{k})$ preserving $\g{t}$ yields an element of $\Aut(\Delta_\g{g},\Delta_\g{k})$. Conversely, every element of $\Aut(\Delta_\g{g},\Delta_\g{k})$ can be extended to an element of $\Aut(\g{g},\g{k})$ preserving $\g{t}$.
\end{proposition}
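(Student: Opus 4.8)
The plan is to complexify and follow how a Lie-algebra automorphism preserving $\g{t}$ permutes the root spaces of $\g{g}^\C$ relative to $\g{t}^\C$. The first assertion is then immediate. Indeed, since $G/K$ is inner, $\g{t}$ is maximal abelian in $\g{g}$, so the set-up of Section~\ref{sec:foliations} applies with $V=\g{p}$ and $\rho=\Ad$; by Theorem~\ref{th:automPolar} the group $\Aut(\g{k},\cal{F}_{G/K})$ coincides with $\Aut(\g{g},\g{k})$, and $\Delta_V=\Delta_\g{p}=\Delta_\g{g}\setminus\Delta_\g{k}$, as recalled above. Hence Proposition~\ref{prop:auto_roots} gives exactly the first claim. (Concretely: if $\varphi\in\Aut(\g{g},\g{k})$ preserves $\g{t}$, then $\varphi^\C$ preserves $\g{t}^\C$, so it permutes the root spaces $\g{g}_\alpha$ and induces an orthogonal transformation of $\g{t}$ — for the $\Aut(\g{g})$-invariant inner product $-B_\g{g}$ used in this subsection — that carries the coroots of $\Delta_\g{g}$ onto themselves; since in addition $\varphi(\g{k})=\g{k}$ and $\g{k}^\C=\g{t}^\C\oplus\bigoplus_{\alpha\in\Delta_\g{k}}\g{g}_\alpha$, it carries compact coroots to compact coroots, i.e.\ $\varphi\rvert_\g{t}\in\Aut(\Delta_\g{g},\Delta_\g{k})$.)

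For the converse I would first isolate the following observation: \emph{if $\varphi\in\Aut(\g{g})$ satisfies $\varphi(\g{t})=\g{t}$ and $\varphi\rvert_\g{t}=\psi$ for some $\psi\in\Aut(\Delta_\g{g},\Delta_\g{k})$, then automatically $\varphi(\g{k})=\g{k}$}. Indeed $\varphi^\C$ maps $\g{g}_\alpha$ onto $\g{g}_{\psi(\alpha)}$ for every $\alpha\in\Delta_\g{g}$, and since $\psi(\Delta_\g{k})=\Delta_\g{k}$ and $\varphi^\C(\g{t}^\C)=\g{t}^\C$ we get $\varphi^\C(\g{k}^\C)=\g{k}^\C$, whence $\varphi(\g{k})=\varphi(\g{g}\cap\g{k}^\C)=\g{g}\cap\g{k}^\C=\g{k}$. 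Thus it suffices to realize every $\psi\in\Aut(\Delta_\g{g})$ in the form $\varphi\rvert_\g{t}$ with $\varphi\in\Aut(\g{g})$ preserving $\g{t}$.

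This last step is classical. Writing $\Aut(\Delta_\g{g})=W(\Delta_\g{g})\rtimes S$, where $S$ is the stabilizer of the chosen simple system $\Pi_\g{g}$, decompose $\psi=w\circ\delta$ with $w\in W(\Delta_\g{g})$ and $\delta\in S$. The Weyl-group factor $w$ is realized by $\Ad(n)\rvert_\g{g}$ for some $n$ normalizing $\g{t}$ in the compact group $G$; and the diagram automorphism $\delta$ lifts to an automorphism of $\g{g}^\C$ fixing a pinning $(\g{t}^\C,\Pi_\g{g},\{E_{\alpha_i}\})$ which, for a Chevalley basis normalized so that the conjugation of $\g{g}^\C$ with respect to $\g{g}$ sends $E_\alpha$ to $-E_{-\alpha}$, preserves the compact real form $\g{g}$. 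The composite is the desired $\varphi$ with $\varphi\rvert_\g{t}=\psi$, and the observation above yields $\varphi(\g{k})=\g{k}$. The only genuinely delicate point is this realization of Dynkin-diagram automorphisms by automorphisms of the compact form $\g{g}$ fixing $\g{t}$ setwise; once such a lift is available, the compatibility with $\g{k}$ comes for free and the rest is bookkeeping with the root-space decomposition. Alternatively one may quote the isomorphism theorem for complex semisimple Lie algebras together with the conjugacy of compact real forms.
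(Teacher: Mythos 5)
Your proof is correct and follows essentially the same route as the paper: the first claim is read off from Proposition~\ref{prop:auto_roots}, and for the converse both arguments reduce to extending a root-system automorphism to an automorphism of the compact real form preserving $\g{t}$, then observing that preservation of the compact coroots forces preservation of $\g{k}$. The only difference is that the paper simply quotes the second assertion of \cite[Ch.~IX, Th.~5.1]{He78} for the extension step, whereas you reconstruct it via the decomposition $\Aut(\Delta_\g{g})=W(\Delta_\g{g})\rtimes S$, realizing Weyl elements by $\Ad(n)$ with $n\in N_G(\g{t})$ and diagram automorphisms by pinned automorphisms of $\g{g}^\C$ normalized to preserve the compact form.
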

\begin{proof}
The first claim follows from Proposition~\ref{prop:auto_roots}.
For the converse, let $\varphi\in\Aut(\Delta_\g{g},\Delta_\g{k})$. The second assertion in \cite[Ch.~IX, Th.~5.1]{He78} affirms that $\varphi\in\Aut(\Delta_\g{g})$ can be extended to an automorphism $\wt{\varphi}$ of $\g{g}$. Let $\phi=\wt{\varphi}^\C$. Arguing as in Proposition~\ref{prop:auto_roots}, if $\alpha\in\Delta_\g{k}$, then $\phi(\g{g}_\alpha)=\g{g}_\beta$ and $\varphi H_\alpha =\phi H_\alpha =H_\beta$, where $\beta=\alpha\circ\phi^{-1}\rvert_{\g{t}^\C}\in\Delta_\g{g}$. Since $\varphi$ sends compact coroots to compact coroots, we have that $\beta\in\Delta_\g{k}$. Hence $\phi(\g{g}_\alpha)\subset\g{k}^\C$ for every $\alpha\in\Delta_\g{k}$. Since $\phi(\g{t}^\C)=\g{t}^\C\subset \g{k}^\C$ as well, we get $\phi(\g{k}^\C)=\g{k}^\C$ and, due to the invariance of $\g{g}$ under $\phi$, we have that $\phi(\g{k})=\g{k}$. Therefore, $\wt{\varphi}$ is the desired extension of $\varphi$.
\end{proof}

As if $\varphi\in \Aut(\Delta_\g{g},\Delta_\g{k})$, then also $-\varphi\in \Aut(\Delta_\g{g},\Delta_\g{k})$, the previous two propositions imply:

\begin{corollary}\label{cor:congruence}
Let $T_1,T_2\in\mathcal{J}\cap\g{t}$. Then $T_1\sim T_2$ if and only if there exists $\varphi\in \Aut(\Delta_\g{g},\Delta_\g{k})$ such that $\varphi T_1= T_2$.
\end{corollary}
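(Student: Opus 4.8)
The plan is to read the corollary off directly from the two preceding propositions of this subsection, the only extra ingredient being that $-\Id_\g{t}$ itself lies in $\Aut(\Delta_\g{g},\Delta_\g{k})$. This last point is immediate: for any root $\alpha$ one has $H_{-\alpha}=-H_\alpha$, so $-\Id_\g{t}$ permutes the set of coroots of $\Delta_\g{g}$ and, separately, the set of coroots of $\Delta_\g{k}$; hence $-\Id_\g{t}\in\Aut(\Delta_\g{g},\Delta_\g{k})$ and therefore $-\varphi=(-\Id_\g{t})\circ\varphi\in\Aut(\Delta_\g{g},\Delta_\g{k})$ for every $\varphi\in\Aut(\Delta_\g{g},\Delta_\g{k})$. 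This is precisely what will allow the ambiguous sign in $\varphi T_1=\pm T_2$ to be absorbed into the group $\Aut(\Delta_\g{g},\Delta_\g{k})$.

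For the ``if'' direction I would take $\varphi\in\Aut(\Delta_\g{g},\Delta_\g{k})$ with $\varphi T_1=T_2$. The converse half of the second proposition extends $\varphi$ to some $\wt\varphi\in\Aut(\g{g},\g{k})$ preserving $\g{t}$ with $\wt\varphi\rvert_\g{t}=\varphi$, so in particular $\wt\varphi T_1=T_2$; the first proposition, applied with the plus sign, then gives $T_1\sim T_2$. For the ``only if'' direction, from $T_1\sim T_2$ the first proposition supplies $\psi\in\Aut(\g{g},\g{k})$ preserving $\g{t}$ with $\psi T_1=\pm T_2$, and the direct half of the second proposition yields $\varphi:=\psi\rvert_\g{t}\in\Aut(\Delta_\g{g},\Delta_\g{k})$ with $\varphi T_1=\pm T_2$; if the sign is $-$, one replaces $\varphi$ by $-\varphi$, which by the remark above still belongs to $\Aut(\Delta_\g{g},\Delta_\g{k})$ and satisfies $(-\varphi)T_1=T_2$.

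There is no genuine obstacle here; the content has already been placed in the two propositions. The only points deserving a moment's care are the elementary verification that $-\Id_\g{t}$ preserves both sets of coroots, and the bookkeeping that the extension furnished by the second proposition can be taken so that its restriction to $\g{t}$ equals $\varphi$ on the nose, making the transfer of the equation $\varphi T_1=T_2$ to $\wt\varphi T_1=T_2$ immediate.
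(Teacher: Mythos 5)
Your proposal is correct and is essentially the paper's own argument: the paper derives the corollary in one line from the same two preceding propositions, using precisely the observation that $-\varphi\in\Aut(\Delta_\g{g},\Delta_\g{k})$ whenever $\varphi$ is (which absorbs the $\pm$ sign). You have simply written out the bookkeeping that the paper leaves implicit.
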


Henceforth we will further assume that the compact inner symmetric pair $(G,K)$ satisfying the maximality property is irreducible. The classification of the complex structures preserving  foliations induced by reducible symmetric spaces follows from the classification of the irreducible case, in view of Proposition~\ref{prop:complex_structure}(iii) and Corollary~\ref{cor:congruence}.

We will denote by $\Out(\Delta_\g{g},\Delta_\g{k})$ the subgroup of $\Aut(\Delta_\g{g},\Delta_\g{k})$ of  automorphisms leaving $\bar{C}$ invariant, or equivalently, leaving invariant the simple compact coroots. Now the groups $\Out(\Delta_\g{k},\Delta_V)$ and $\Out^\pm_\cal{F}(\Delta_\g{k},\Delta_V)$ introduced in \S\ref{subsec:congruence} are exactly $\Out(\Delta_\g{g},\Delta_\g{k})$. We have:

\begin{proposition}\label{prop:criterion_Out}
Let $T_1,T_2\in\mathcal{J}\cap\bar{C}$. Then $T_1\sim T_2$ if and only if there exists $\varphi\in\Out(\Delta_\g{g},\Delta_\g{k})$ such that $\varphi (T_1)=T_2$. If moreover $G/K$ is Hermitian, we have:
\begin{itemize}
\item[(i)]If $T_1\notin (\g{t}')^\perp$ and $T_2\in(\g{t}')^\perp$, then $T_1\nsim T_2$. 
\item[(ii)]If $T_1,T_2\in(\g{t}')^\perp$, then $T_1\sim T_2$.
\end{itemize}
\end{proposition}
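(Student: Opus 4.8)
The first assertion is nothing but Proposition~\ref{prop:criterion_Out_F} applied to $\cal{F}=\cal{F}_{G/K}$, once one uses the identification $\Out_\cal{F}^\pm(\Delta_\g{k},\Delta_V)=\Out(\Delta_\g{g},\Delta_\g{k})$ noted just above; alternatively, it can be re-derived from Corollary~\ref{cor:congruence} together with the fact that $\bar{C}$ is a fundamental domain for $W(\Delta_\g{k})$, exactly as in the proof of Proposition~\ref{prop:criterion_Out_F}. So the plan is to concentrate on statements (i) and (ii), that is, on how $\Out(\Delta_\g{g},\Delta_\g{k})$ and $\sim$ interact with the orthogonal splitting $\g{t}=\g{t}'\oplus(\g{t}')^\perp$, recalling that in the Hermitian case $(\g{t}')^\perp=\R\h_\nu$ (which coincides with the center $Z(\g{k})$ of $\g{k}$).

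The structural fact I would isolate first is that every element of $\Aut(\Delta_\g{g},\Delta_\g{k})$ --- in particular every element of $\Out(\Delta_\g{g},\Delta_\g{k})$ --- maps the set of compact coroots onto itself, hence preserves its span $\g{t}'$, and therefore, being orthogonal with respect to $-B_\g{g}$, also preserves $(\g{t}')^\perp$; moreover its restriction to $\g{t}'$ is a linear isomorphism. Granting this, statement (i) is immediate: if $T_1\sim T_2$, use the first part to pick $\varphi\in\Out(\Delta_\g{g},\Delta_\g{k})$ with $\varphi T_1=T_2$ and write $T_1=a+b$ with $a\in\g{t}'$, $b\in(\g{t}')^\perp$ and $a\neq 0$ (because $T_1\notin(\g{t}')^\perp$); then $\varphi T_1=\varphi a+\varphi b$ with $\varphi a\in\g{t}'\setminus\{0\}$ and $\varphi b\in(\g{t}')^\perp$, so $\varphi T_1\notin(\g{t}')^\perp$, contradicting $T_2\in(\g{t}')^\perp$.

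For (ii) I would first pin down $(\g{t}')^\perp\cap(\cal{J}\cap\bar{C})$. By Lemma~\ref{lemma:integers}(ii), $\cal{J}\cap\bar{C}=\{\pm\h_\nu\}\cup\{-\h_\nu+2\h_i:i\in\cal{I}\}$; the vectors $-\h_\nu+2\h_i$ are not proportional to $\h_\nu$ (the $\h_j$ form a basis of $\g{t}$), hence do not lie in $(\g{t}')^\perp=\R\h_\nu$, whereas $\pm\h_\nu$ do and both lie in $\bar{C}$ (every compact simple root vanishes on $\h_\nu$). Thus $T_1,T_2\in(\g{t}')^\perp$ forces $\{T_1,T_2\}\subseteq\{\h_\nu,-\h_\nu\}$, so by reflexivity and symmetry of $\sim$ it only remains to check $\h_\nu\sim-\h_\nu$. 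This is clear, e.g.\ by Proposition~\ref{prop:general_foliation} with $A=\Id$ (a complex structure and its negative give identical, in particular congruent, foliations on $\C P^n$), or because $-\Id_\g{t}\in\Aut(\Delta_\g{g},\Delta_\g{k})$ sends $\h_\nu$ to $-\h_\nu$ and Corollary~\ref{cor:congruence} applies.

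I do not foresee a genuine obstacle: the argument is bookkeeping on top of the machinery of Section~\ref{sec:foliations} and Lemma~\ref{lemma:integers}. The only point that deserves care is the first structural fact --- that automorphisms of $\Delta_\g{g}$ respecting $\Delta_\g{k}$ automatically respect the decomposition $\g{t}=\g{t}'\oplus(\g{t}')^\perp$ --- since this is exactly what makes the ``$(\g{t}')^\perp$-component'' a $\sim$-invariant and is the content distilled in (i).
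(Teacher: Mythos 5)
Your proposal is correct and follows essentially the same route as the paper: the first claim is read off from Proposition~\ref{prop:criterion_Out_F}, part (i) follows because every element of $\Aut(\Delta_\g{g},\Delta_\g{k})$ is an orthogonal map preserving $\g{t}'$ (hence $(\g{t}')^\perp$), and part (ii) reduces to $\h_\nu\sim-\h_\nu$ via $-\Id_\g{t}$. The only cosmetic difference is that for (ii) the paper argues abstractly that $\cal{J}$ meets each line of $\g{t}$ in at most a pair of opposite vectors, whereas you identify $(\g{t}')^\perp\cap\cal{J}\cap\bar{C}=\{\pm\h_\nu\}$ explicitly from Lemma~\ref{lemma:integers}(ii); both are fine.
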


\begin{proof}
The first claim is a rewriting of Proposition~\ref{prop:criterion_Out_F}.
Let $G/K$ be Hermitian. The fact that every element of $\Aut(\Delta_\g{g},\Delta_\g{k})$ is an orthogonal transformation of $\g{t}$ which leaves $\g{t}'$ invariant implies (i). Since $-\Id_\g{t}\in\Aut(\Delta_\g{g},\Delta_\g{k})$, and since the intersection of $\mathcal{J}$ with each $1$-dimensional subspace of $\g{t}$ is either empty or a pair of opposite vectors, we obtain (ii).
\end{proof}

We need now to introduce an important notion for our work. First recall that the extended Dynkin diagram of $\g{g}$ is the Dynkin diagram of $\g{g}$ together with the extra node $\alpha_0$, which is joined to the other nodes according to the usual rules. Thus, we define the \emph{extended Vogan diagram} of $(\g{g},\g{k})$ as the extended Dynkin diagram of $\g{g}$, where the nodes corresponding to noncompact roots are painted while the other nodes remain unpainted. This definition depends in principle on the maximally compact abelian subalgebra $\g{t}$ and on the chosen set of simple roots $\Pi_\g{g}$. However, by the Borel-de Siebenthal theorem and the choices made at the beginning of this section, we can and will assume that every extended Vogan diagram has either exactly one or exactly two painted nodes ($\alpha_\nu$ and, maybe, $\alpha_0$). The first case happens when $G/K$ is not Hermitian and hence the adjoint $\g{k}^\C$-representation on $\g{p}^\C$ is irreducible. The second case occurs when $G/K$ is Hermitian, so the adjoint $\g{k}^\C$-representation on $\g{p}^\C$ decomposes into the sum of two irreducible representations. In both cases, the roots corresponding to the painted nodes in the extended Vogan diagram (that is, the roots in $\Pi_\g{g}\setminus\Pi_\g{k}$) are exactly the lowest weights of the adjoint $\g{k}^\C$-representation on $\g{p}^\C$. Therefore, extended Vogan diagrams represent a very particular case of lowest weight diagrams. The extended Vogan diagrams of irreducible inner symmetric spaces can be obtained from Table~\ref{table:diagrams}. Analogously as in \S\ref{subsec:congruence}, we define an automorphism of an extended Vogan diagram as a permutation of its nodes preserving the graph and the painted nodes. For details, references, and recent applications of extended Vogan diagrams, see \cite{CH10}.

We can now improve Proposition~\ref{prop:auto_diagram} for $s$-representations of inner symmetric spaces.

\begin{proposition}\label{prop:autoEVD}
Every automorphism in $\Out(\Delta_\g{g},\Delta_\g{k})$ determines an automorphism of the extended Vogan diagram of $(\g{g},\g{k})$ in a unique natural way, and conversely.
\end{proposition}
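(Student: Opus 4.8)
The plan is to establish a bijection between $\Out(\Delta_\g{g},\Delta_\g{k})$ and the automorphism group of the extended Vogan diagram of $(\g{g},\g{k})$, by observing that both groups act on the same combinatorial object — the extended Dynkin diagram of $\g{g}$ together with its painted nodes — and that the relevant constraints match up on the two sides.

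First I would recall that $\Out(\Delta_\g{g},\Delta_\g{k})$ consists of the automorphisms of the root system $\Delta_\g{g}$ that leave $\bar{C}$ invariant, equivalently that permute the simple \emph{compact} coroots, and that also preserve the subsystem $\Delta_\g{k}$. Since $\Pi_\g{k}$ is either $\Pi_\g{g}\setminus\{\alpha_\nu\}$ (Hermitian case) or $(\Pi_\g{g}\cup\{\alpha_0\})\setminus\{\alpha_\nu\}$ (non-Hermitian case), such an automorphism $\varphi$ permutes the nodes of the extended Dynkin diagram of $\g{g}$ in a way that preserves the diagram structure (it is an automorphism of $\Delta_\g{g}$, hence of the extended Dynkin diagram, because the affine node $\alpha_0=-\mu$ is canonically determined by the highest root) and fixes the set of unpainted simple compact coroots; since it also preserves $\Delta_\g{k}$ and hence its set of roots, it must fix the set of painted nodes $\Pi_\g{g}\setminus\Pi_\g{k}$ (the lowest weights of the $\g{k}^\C$-representation on $\g{p}^\C$) as a whole. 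Conversely, any permutation of the nodes of the extended Dynkin diagram of $\g{g}$ that preserves the graph and the painted nodes is, in particular, an automorphism of the (ordinary) Dynkin diagram of $\g{g}$ — because removing the affine node is a diagram-theoretic operation once we know which node it is, and a painting-preserving diagram automorphism that fixes $\alpha_0$ restricts correctly, while one that moves $\alpha_0$ still corresponds to a well-defined Weyl-group-twisted automorphism of $\Delta_\g{g}$ via the standard identification of $\Aut(\text{extended Dynkin diagram})$ with a subgroup of $\Aut(\Delta_\g{g})$ modulo the affine Weyl group. In either subcase such an automorphism preserves both $\Delta_\g{k}$ and $\bar{C}$, so it lies in $\Out(\Delta_\g{g},\Delta_\g{k})$.

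The key point making this a \emph{bijection} rather than merely a surjection in one direction is that an automorphism in $\Out(\Delta_\g{g},\Delta_\g{k})$ is determined by its effect on the simple coroots of $\g{k}$ together with its effect on $\h_\nu$ (or equivalently on the painted node(s)): the simple compact coroots together with the lowest coweights span $\g{t}^*$, so Proposition~\ref{prop:auto_diagram} already gives injectivity of the induced map to the automorphism group of the lowest weight diagram, and the extended Vogan diagram is precisely this lowest weight diagram in the inner symmetric case, with all edge labels equal to $-1$ (so that the label-preservation condition in the definition of a diagram automorphism is automatic and the diagram automorphisms are just the painting-preserving graph automorphisms). Injectivity in the other direction — that distinct diagram automorphisms give distinct elements of $\Out(\Delta_\g{g},\Delta_\g{k})$ — follows because a permutation of the nodes of the extended Dynkin diagram determines its action on all of $\g{t}$ by linearity once we know how it permutes a spanning set of (co)roots, and conversely an element of $\Out$ acts on each simple coroot by sending it to another simple coroot, which is exactly the data of the induced node permutation.

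The main obstacle I anticipate is the careful bookkeeping of the affine node $\alpha_0$ in the non-Hermitian case: there one has to check that a diagram automorphism which moves $\alpha_0$ to some $\alpha_j$ (and $\alpha_j$ necessarily back somewhere) genuinely extends to an automorphism of $\g{g}$ preserving $\g{t}$ and $\g{k}$, not just to an affine transformation. This is handled by the standard fact (used already in the proof right before this statement, via \cite[Ch.~IX, Th.~5.1]{He78}) that every automorphism of $\Delta_\g{g}$ lifts to an automorphism of $\g{g}$; one then verifies as in the preceding proposition that preservation of the painted-node set forces preservation of $\g{k}^\C$, hence of $\g{k}$. Once this lifting and the spanning condition on simple coroots plus lowest coweights are in place, the correspondence and its inverse are both manifestly well-defined, and uniqueness (the word ``unique natural way'' in the statement) is immediate from injectivity.
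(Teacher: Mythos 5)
Your proposal is correct and follows essentially the same route as the paper: the forward direction is the specialization of Proposition~\ref{prop:auto_diagram} to extended Vogan diagrams (with injectivity coming from the fact that the simple compact roots together with the painted roots span $\g{t}^*$), and the converse rests on the standard fact that every automorphism of the extended Dynkin diagram induces an automorphism of $\Delta_\g{g}$, which then lies in $\Out(\Delta_\g{g},\Delta_\g{k})$ because it preserves the unpainted nodes. The extra discussion of lifting to automorphisms of $\g{g}$ preserving $\g{k}$ is not needed for this statement (it is the content of the preceding proposition), but it does no harm.
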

\begin{proof}
The first claim follows directly from Proposition~\ref{prop:auto_diagram}.
Let us show the converse. Every automorphism of the extended Dynkin diagram of $\g{g}$ defines an automorphism $\varphi\in\Aut(\Delta_\g{g})$ (see \cite[Ch.~VII, Prop.~1.4(a)]{Lo69}). Since every automorphism of the extended Vogan diagram of $(\g{g},\g{k})$ preserves the unpainted nodes, the induced automorphism $\varphi\in\Aut(\Delta_\g{g})$ leaves invariant the simple compact coroots, and hence $\varphi\in\Out(\Delta_\g{g},\Delta_\g{k})$.
\end{proof}

We finish this subsection by proving a quite useful technical lemma.

\begin{lemma}\label{lemma:sigma}
Let $\mu=\sum_{i=1}^p y_i \alpha_i$ be the highest root. Given $\varphi\in\Out(\Delta_\g{g},\Delta_\g{k})$, let $\sigma$ be the permutation of the set of indices $\{0,1,\ldots,p\}$ that defines the automorphism of the extended Vogan diagram associated to $\varphi$. We have:
\begin{itemize}
\item[(i)] If $\sigma(0)=0$, then $\varphi(\h_i)=\h_{\sigma(i)}$ for all $i\in\{1,\ldots,p\}$.
\item[(ii)] If $\sigma(0)=\nu$, then $\varphi(\h_i)=\h_{\sigma(i)}-y_i\h_\nu$ for $i\in\{1,\ldots,p\}\setminus\{\nu\}$, and $\varphi(\h_\nu)=-\h_\nu$.
\item[(iii)] If $\sigma$ interchanges $0$ and $k$, with $k\in\{1,\ldots,p\}\setminus\{\nu\}$, then $\varphi(\h_i)=\h_{\sigma(i)}-y_i\h_k$ for $i\in\{1,\ldots,n\}\setminus\{k\}$, and $\varphi(\h_k)=-y_k\h_k$.
\end{itemize}
\end{lemma}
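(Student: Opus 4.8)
The plan is to reduce the statement to a direct computation in the basis $\{\h_1,\ldots,\h_p\}$ dual to $\Pi_\g{g}=\{\alpha_1,\ldots,\alpha_p\}$, using only two inputs: that $\varphi$ permutes the extended simple roots according to $\sigma$, and that $\alpha_0=-\mu=-\sum_{i=1}^p y_i\alpha_i$, so that $\alpha_0(\h_j)=-y_j$. First I would record that, by Proposition~\ref{prop:autoEVD}, $\varphi$ is induced by the automorphism of the extended Dynkin diagram of $\g{g}$ encoded by $\sigma$, so $\varphi(\alpha_i)=\alpha_{\sigma(i)}$ for every $i\in\{0,1,\ldots,p\}$ (the case $i=0$ is automatic because a diagram automorphism preserves the marks $y_i$). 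Since $\Aut(\Delta_\g{g})$ acts on $\g{t}^*$ by $\alpha\mapsto\alpha\circ\varphi^{-1}$, this translates into the pointwise identity $\alpha_j(\varphi(H))=\alpha_{\sigma^{-1}(j)}(H)$ for all $H\in\g{t}$ and all $j\in\{0,\ldots,p\}$, reading $\alpha_0=-\mu$.

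Next I would expand $\varphi(\h_j)$ in the dual basis, for $j\in\{1,\ldots,p\}$: one has $\varphi(\h_j)=\sum_{i=1}^p\alpha_i(\varphi(\h_j))\,\h_i=\sum_{i=1}^p\alpha_{\sigma^{-1}(i)}(\h_j)\,\h_i$. The coefficient of $\h_i$ is nonzero only in two ways: either $\sigma^{-1}(i)=j$, contributing the term $\h_{\sigma(j)}$ when $\sigma(j)\le p$ (and contributing nothing when $\sigma(j)=0$, i.e.\ when $j=\sigma^{-1}(0)$); or $\sigma^{-1}(i)=0$, i.e.\ $i=\sigma(0)$, contributing $\alpha_0(\h_j)\,\h_{\sigma(0)}=-y_j\,\h_{\sigma(0)}$ when $\sigma(0)\le p$. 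From here the three cases follow by plugging in the combinatorics of $\sigma$ forced by the Borel--de Siebenthal normalization: in case (i) $\sigma(0)=0$, so there is no $\alpha_0$-contribution and $\sigma$ maps $\{1,\dots,p\}$ to itself, giving $\varphi(\h_j)=\h_{\sigma(j)}$; in case (ii), since the painted nodes of the Hermitian extended Vogan diagram are exactly $\alpha_0$ and $\alpha_\nu$ and $\sigma$ preserves them, $\sigma$ swaps $0$ and $\nu$ and fixes $\{1,\dots,p\}\setminus\{\nu\}$ setwise, so $\varphi(\h_j)=\h_{\sigma(j)}-y_j\h_\nu$ for $j\neq\nu$ while for $j=\nu$ the first term is absent and $\varphi(\h_\nu)=-y_\nu\h_\nu=-\h_\nu$ because $y_\nu=1$ in the Hermitian case; in case (iii), the hypothesis that $\sigma$ interchanges $0$ and $k$ (which forces $k\neq\nu$, hence the non-Hermitian case with $y_\nu=2$) gives, by the same reasoning with $\nu$ replaced by $k$, that $\varphi(\h_j)=\h_{\sigma(j)}-y_j\h_k$ for $j\neq k$ and $\varphi(\h_k)=-y_k\h_k$.

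I do not expect a real obstacle: the content is essentially index bookkeeping. The only points needing care are (a) that $\varphi(\alpha_i)=\alpha_{\sigma(i)}$ stays valid for $i=0$, which is the mark-preservation property of affine diagram automorphisms; (b) checking that $\sigma(j)\in\{1,\ldots,p\}$ for every $j\in\{1,\ldots,p\}$ except the single index $j=\sigma^{-1}(0)$, so that each symbol $\h_{\sigma(j)}$ is meaningful; and (c) isolating that exceptional index ($j=\nu$ in case (ii), $j=k$ in case (iii)), where the would-be term $\h_0$ does not occur and the coefficient of $\h_\nu$ (resp.\ $\h_k$) comes purely from the $\alpha_0$-contribution $-y_j$. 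All three points are consequences of the normalization of $\Pi_\g{g}$ fixed at the beginning of the section and of the dichotomy $y_\nu=1\Leftrightarrow G/K$ Hermitian recalled there.
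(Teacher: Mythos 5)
Your proposal is correct and follows essentially the same route as the paper: both arguments reduce to evaluating $\alpha_j(\varphi(\h_i))=\alpha_{\sigma^{-1}(j)}(\h_i)$ (using that $\varphi$ permutes the extended simple coroots according to $\sigma$ and that $\alpha_0=-\mu$ gives $\alpha_0(\h_i)=-y_i$), and then reading off the coefficients of $\varphi(\h_i)$ in the dual basis case by case. The only cosmetic difference is that the paper phrases the key identity via the $B_\g{g}$-orthogonality of $\varphi$, $\alpha_j(\varphi(\h_i))=B_\g{g}(\varphi^{-1}(H_{\alpha_j}),\h_i)$, whereas you invoke the dual action $\varphi(\alpha)=\alpha\circ\varphi^{-1}$ directly; these are the same computation.
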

\begin{proof}
Fix $i\in\{1,\ldots,p\}$. For every $j\in\{0,1,\ldots,p\}$ we have:
\begin{align*}
\alpha_j(\varphi(\h_i))=B_\g{g}(H_{\alpha_j},\varphi(\h_i))=B_\g{g}(\varphi^{-1}(H_{\alpha_j}),\h_i)=B_\g{g}(H_{\alpha_{\sigma^{-1}(j)}}, \h_i)= 
\alpha_{\sigma^{-1}(j)}(\h_i),
\end{align*}
which is equal to the Kronecker delta $\delta_{\sigma(i),j}$ if $\sigma^{-1}(j)\neq 0$, and is equal to $-y_i$ if $\sigma^{-1}(j)=0$.

Assume that $\sigma(0)=0$. Then $\sigma$ leaves $\{1,\ldots,p\}$ invariant, and hence $\alpha_j(\varphi(\h_i))=\delta_{\sigma(i),j}$ for all $j\in\{1,\ldots, p\}$. Then (i) follows.

Assume now that $\sigma(0)=\nu$. Then $\alpha_0$ is noncompact, $\sigma$ interchanges $0$ and $\nu$ and preserves $\{1,\ldots,p\}\setminus\{\nu\}$. Consider first that $i\neq \nu$. Then $\alpha_\nu(\varphi(\h_i))=\alpha_0(\h_i)=-y_i$, $\alpha_{\sigma(i)}(\varphi(\h_i))=1$, and $\alpha_j(\varphi(\h_i))=0$ if $j\in\{1,\ldots,p\}\setminus\{\nu,\sigma(i)\}$. Therefore, if $i\neq \nu$, then $\varphi(\h_i)=\h_{\sigma(i)}-y_i\h_\nu$. Since $\alpha_j(\varphi(\h_\nu))=0$ for all $j\in\{1,\ldots,p\}\setminus\{\nu\}$, and $\alpha_\nu(\varphi(\h_\nu))=-y_\nu=-1$, we get $\varphi(\h_\nu)=-\h_\nu$.

Finally let $\sigma$ be as in (iii). Then $\alpha_0$ is compact and hence $\sigma(\nu)=\nu$. If $i\neq k$, we have that $\alpha_{\sigma(i)}(\varphi(\h_i))=1$, $\alpha_k(\varphi(\h_i))=-y_i$, and $\alpha_j(\varphi(\h_i))=0$ for all $j\in\{1,\ldots,p\}\setminus\{k,\sigma(i)\}$. It follows $\varphi(\h_i)=\h_{\sigma(i)}-y_i\h_k$ if $i\neq k$. Since $\alpha_k(\varphi(\h_k))=-y_k$, and $\alpha_j(\varphi(\h_k))=0$ for all $j\in\{1,\ldots,p\}\setminus\{k\}$, we obtain $\varphi(\h_k)=-y_k\h_k$.
\end{proof}

\subsection{Classification of the complex structures}\label{subsec:classification_homogeneous}

We are now in position to get the case-by-case classification of the complex structures that preserve homogeneous polar foliations, up to congruence of the projected foliations on the complex projective space. As in the previous subsection, we will consider compact irreducible inner symmetric pairs $(G,K)$ satisfying the maximality property. 
According to the results above in this section, the set $\mathcal{J}\cap\bar{C}$ and the cardinality $N=N(\cal{F}_{G/K})$ of the quotient $(\mathcal{J}\cap\bar{C})/\!\sim$ can be calculated from the following data: $\mu$, $\lambda$, and the symmetries of the extended Vogan diagram. All this information can be extracted from Table~\ref{table:diagrams}, where the resulting value of $N$ is also shown. 

In each case, we begin by indicating the corresponding orthogonal symmetric pair $(\g{g},\g{k})$ and the possible values of $p$ and $\nu$. Then we specify the Hermitian or non-Hermitian character and the set $\mathcal{J}\cap\bar{C}$. If needed, we give a set of generators of $\Out(\Delta_\g{g},\Delta_\g{k})$ (defined by means of symmetries of the extended Vogan diagram), and their action on (maybe only some elements of) $\mathcal{J}\cap\bar{C}$. Finally, we specify the value of $N$.

\subsection*{Type \textbf{A III}: $(\g{su}(p+1),\g{s}(\g{u}(\nu)\oplus\g{u}(p-\nu+1)))$, \quad $p\geq 3$, \quad $2\leq\nu\leq p-1$.} 
\begin{itemize}
\item Hermitian.

\item $\mathcal{J}\cap\bar{C}=\{\pm\h_\nu\}\cup\{-\h_\nu+2\h_i: i=1,\ldots,p; i\neq \nu\}$.

\item Generators of $\Out(\Delta_\g{g},\Delta_\g{k})$:
	\begin{itemize}
	\item[$\circ$] $\varphi_1$: $\alpha_i\leftrightarrow\alpha_{\nu-i}$ for all $i=0,\ldots, \nu$, and $\alpha_i\leftrightarrow 
	       \alpha_{p+\nu-i+1}$ for all $i=\nu+1,\ldots, p$.
	\item[$\circ$] $\varphi_2$ (only if $2\nu=p+1$): $\alpha_i\leftrightarrow \alpha_{p-i+1}$ for $i=1,\ldots,p$, and fixes $\alpha_0$ and $\alpha_\nu$.
	\end{itemize}
	
\item Action on $\mathcal{J}\cap\bar{C}$:
	\begin{itemize}
	\item[$\circ$] $\varphi_1$: $\h_\nu\leftrightarrow-\h_\nu$,     
	               $-\h_\nu+2\h_i\leftrightarrow-\h_\nu+2\h_{\nu-i}$ for $i=1,\ldots, \nu-1$, and 
	               $-\h_\nu+2\h_i\leftrightarrow-\h_\nu+2\h_{p+\nu-i+1}$ for $i=\nu+1,\ldots, p$.
	\item[$\circ$] $\varphi_2$ (only if $2\nu=p+1$): $-\h_\nu+2\h_i\leftrightarrow -\h_\nu+2\h_{p-i+1}$ for $i\in\{1,\ldots,p\}\setminus\{\nu\}$, and fixes $\pm\h_\nu$.
	\end{itemize}
	
\item $N=1+\left[\frac{\nu}{2}\right]+\left[\frac{p-\nu+1}{2}\right]$ if $2\nu\neq p+1$, and $N=1+\left[\frac{\nu}{2}\right]$ if $2\nu=p+1$ (where $[\cdot]$ denotes the integer part of a real number).
\end{itemize}

\subsection*{Type \textbf{B I}: $(\g{so}(2p+1),\g{so}(2\nu)\oplus\g{so}(2p-2\nu+1))$, \quad $p\geq 3$.}

\begin{itemize}
\item Hermitian if and only if $\nu=1$.

\item $\mathcal{J}\cap\bar{C}=\{\pm\h_1\}$ if $\nu=1$, and $\mathcal{J}\cap\bar{C} = \{-\h_\nu, -\h_\nu +        2\h_1\}$ otherwise.

\item Generator of $\Out(\Delta_\g{g},\Delta_\g{k})$: $\varphi$ interchanges $\alpha_0\leftrightarrow\alpha_1$, and fixes $\alpha_i$ for all $i\geq 2$.
	
\item Action on $\mathcal{J}\cap\bar{C}$: $\varphi$ interchanges $-\h_\nu\leftrightarrow-\h_\nu+2\h_1$ if $\nu\neq 1$.

\item $N=1$.
\end{itemize}
By the maximality property, the rank-one case is of type \textbf{B I}, with $\nu=p\geq 1$. Then $N=1$ holds trivially.

\subsection*{Type \textbf{C I}: $(\g{sp}(p),\g{u}(p))$, \quad $\nu=p\geq 2$.}
\begin{itemize}
\item 
\begin{tabular}{@{}b{\bulletspacest}@{}b{\bulletspacend}@{}b{\bulletspacerd}@{}}
Hermitian. & $\bullet\;\,\mathcal{J}\cap\bar{C}=\{\pm\h_\nu\}$. & $\bullet\;\,N=1$.
\end{tabular}
\end{itemize}

\subsection*{Type \textbf{C II}: $(\g{sp}(p),\g{sp}(\nu)\oplus\g{sp}(p-\nu))$, \quad $p\geq 4$, \quad $2\leq \nu \leq p-2$.}
\begin{itemize}
\item 
\begin{tabular}{@{}b{0.5\linewidth}@{}b{0.5\linewidth}@{}}
Non-Hermitian. & $\bullet\;\,\mathcal{J}\cap\bar{C}= \{-\h_\nu,-\h_\nu+2\h_p\}$. 
\end{tabular}

\item Generator of $\Out(\Delta_\g{g},\Delta_\g{k})$: $\varphi$ (only if $2\nu=p$) interchanges $\alpha_i\leftrightarrow \alpha_{p-i}$ for all $i\in\{0,\ldots,p\}$.
	
\item Action on $\mathcal{J}\cap\bar{C}$: $\varphi$ (only if $2\nu=p$) interchanges $-\h_\nu\leftrightarrow -\h_\nu+2\h_p$.

\item $N=1$ if $2\nu=p$, and $N=2$ otherwise.
\end{itemize}

\subsection*{Type \textbf{D I}: $(\g{so}(2p),\g{so}(2\nu)\oplus\g{so}(2p-2\nu))$, \quad $p\geq 4$, \quad $\nu\leq p-2$.}
\begin{itemize}
\item Hermitian if and only if $\nu=1$.

\item $\mathcal{J}\cap\bar{C}=\{\pm\h_1,-\h_1+2\h_{p-1},-\h_1+2\h_{p}\}$ if $\nu=1$, and   
      $\mathcal{J}\cap\bar{C}=\{-\h_\nu,-\h_\nu+2\h_1,-\h_\nu+2\h_{p-1},-\h_\nu+2\h_{p}\}$       otherwise.

\item Generators of $\Out(\Delta_\g{g},\Delta_\g{k})$:
	\begin{itemize}
	\item[$\circ$] $\varphi_1$: $\alpha_{p-1}\leftrightarrow\alpha_p$ and fixes $\alpha_i$ for all 
	               $i\in\{0,\ldots,  p-1\}$.
	\item[$\circ$] $\varphi_2$: $\alpha_0\leftrightarrow\alpha_1$ and fixes $\alpha_i$ for all 
	               $i\in\{2,\ldots, p\}$.
	\item[$\circ$] $\varphi_3$ (only if $2\nu=p$): $\alpha_i\leftrightarrow \alpha_{p-i}$ for all $i\in\{0,\ldots, p\}$.
	\end{itemize}
	
\item Action on $\mathcal{J}\cap\bar{C}$:
	\begin{itemize}
	\item[$\circ$] $\varphi_1$: $-\h_\nu+2\h_{p-1}\leftrightarrow-\h_\nu+2\h_{p}$ and fixes the other                     elements.
	\item[$\circ$] $\varphi_2$: $-\h_\nu\leftrightarrow-\h_\nu+2\h_{1}$ and fixes the other elements, if $\nu\neq 1$.
	\item[$\circ$] $\varphi_3$ (only if $2\nu=p$): $-\h_\nu\leftrightarrow -\h_\nu+2\h_p$.
	\end{itemize}

\item $N=1$ if $2\nu=p$, and $N=2$ otherwise.
\end{itemize}

\subsection*{Type \textbf{D III}: $(\g{so}(2p),\g{u}(p))$, \quad $\nu=p\geq 4$.}
\begin{itemize}
\item 
\begin{tabular}{@{}b{0.5\linewidth}@{}b{0.5\linewidth}@{}}
Hermitian. & $\bullet\;\,\mathcal{J}\cap\bar{C}= \{\pm\h_p,-\h_p+2\h_1,-\h_p+2\h_{p-1}\}$. 
\end{tabular}

\item Generator of $\Out(\Delta_\g{g},\Delta_\g{k})$: $\varphi$ interchanges $\alpha_i\leftrightarrow \alpha_{p-i}$ for all $i\in\{0,\ldots,  p\}$.
	
\item Action on $\mathcal{J}\cap\bar{C}$:  $\varphi$ interchanges $-\h_p+2\h_1\leftrightarrow-\h_p+2\h_{p-1}$.

\item $N=2$.
\end{itemize}

\subsection*{Type \textbf{E II}: $(\g{e}_6,\g{su}(6)\oplus\g{su}(2))$, \quad $\nu=2$.}
\begin{itemize}
\item 
\begin{tabular}{@{}b{0.5\linewidth}@{}b{0.5\linewidth}@{}}
Non-Hermitian. & $\bullet\;\,\mathcal{J}\cap\bar{C}= \{-\h_2, -\h_2+2\h_1,-\h_2+2\h_6\}$. 
\end{tabular}

\item Generator of $\Out(\Delta_\g{g},\Delta_\g{k})$: $\varphi$ switches $\alpha_1\leftrightarrow \alpha_6$, $\alpha_3\leftrightarrow \alpha_5$, and $\alpha_0$, $\alpha_2$, $\alpha_4$ stay fixed.
	
\item Action on $\mathcal{J}\cap\bar{C}$: $\varphi$ interchanges $-\h_2+2\h_1\leftrightarrow -\h_2+2\h_6$, and fixes $-\h_2$.

\item $N=2$.
\end{itemize}

\subsection*{Type \textbf{E III}: $(\g{e}_6,\g{so}(10)\oplus\g{so}(2))$, \quad $\nu=6$.}
\begin{itemize}
\item 
\begin{tabular}{@{}b{\bulletspacest}@{}b{0.45\linewidth}@{}b{0.2\linewidth}@{}}
Hermitian. & $\bullet\;\,\mathcal{J}\cap\bar{C}=\{\pm\h_6,-\h_6+2\h_1\}$. & $\bullet\;\,N=2$.
\end{tabular}
\end{itemize}

\subsection*{Type \textbf{E V}: $(\g{e}_7,\g{su}(8))$, \quad $\nu=2$.}
\begin{itemize}
\item 
\begin{tabular}{@{}b{0.5\linewidth}@{}b{0.5\linewidth}@{}}
Non-Hermitian. & $\bullet\;\,\mathcal{J}\cap\bar{C}= \{ -\h_2,-\h_2+2\h_7\}$. 
\end{tabular}

\item Generator of $\Out(\Delta_\g{g},\Delta_\g{k})$: $\varphi$ switches $\alpha_1\!\leftrightarrow \alpha_6$, $\alpha_3\!\leftrightarrow \alpha_5$, $\alpha_0\!\leftrightarrow \alpha_7$, and fixes $\alpha_2$,~$\alpha_4$.
	
\item Action on $\mathcal{J}\cap\bar{C}$: $\varphi$ interchanges $-\h_2\leftrightarrow -\h_2+2\h_7$.

\item $N=1$.
\end{itemize}
 
\subsection*{Type \textbf{E VI}: $(\g{e}_7,\g{so}(12)\oplus \g{su}(2))$, \quad $\nu=1$.}
\begin{itemize}
\item 
\begin{tabular}{@{}b{0.5\linewidth}@{}b{0.5\linewidth}@{}}
Non-Hermitian. & $\bullet\;\,\mathcal{J}\cap\bar{C}= \{-\h_1,-\h_1+2\h_7\}$. 
\end{tabular}
\item
\begin{tabular}{@{}b{0.5\linewidth}@{}b{0.5\linewidth}@{}}
$\Out(\Delta_\g{g},\Delta_\g{k})$ is trivial. & $\bullet\;\,N=2$.
\end{tabular}
\end{itemize}

\subsection*{Type \textbf{E VII}: $(\g{e}_7,\g{e}_6\oplus\g{so}(2))$, \quad $\nu=7$.}
\begin{itemize}
\item 
\begin{tabular}{@{}b{\bulletspacest}@{}b{\bulletspacend}@{}b{\bulletspacerd}@{}}
Hermitian. & $\bullet\;\,\mathcal{J}\cap \bar{C}=\{\pm\h_7\}$. & $\bullet\;\,N=1$.
\end{tabular}
\end{itemize}

\subsection*{Type \textbf{E VIII}: $(\g{e}_8,\g{so}(16))$, \quad $\nu=1$.}
\begin{itemize}
\item 
\begin{tabular}{@{}b{\bulletspacest}@{}b{\bulletspacend}@{}b{\bulletspacerd}@{}}
Non-Hermitian. & $\bullet\;\,\mathcal{J}\cap\bar{C}=\{-\h_1\}$. & $\bullet\;\,N=1$.
\end{tabular}
\end{itemize}

\subsection*{Type \textbf{E IX}: $(\g{e}_8,\g{e}_7\oplus\g{su}(2))$, \quad $\nu=8$.}
\begin{itemize}
\item 
\begin{tabular}{@{}b{\bulletspacest}@{}b{\bulletspacend}@{}b{\bulletspacerd}@{}}
Non-Hermitian. & $\bullet\;\,\mathcal{J}\cap\bar{C}=\{-\h_8\}$. & $\bullet\;\,N=1$.
\end{tabular}
\end{itemize}

\subsection*{Type \textbf{F I}: $(\g{f}_4,\g{sp}(3)\oplus\g{su}(2))$, \quad $\nu=4$.}
\begin{itemize}
\item 
\begin{tabular}{@{}b{\bulletspacest}@{}b{\bulletspacend}@{}b{\bulletspacerd}@{}}
Non-Hermitian. & $\bullet\;\,\mathcal{J}\cap\bar{C}=\{-\h_4\}$. & $\bullet\;\,N=1$.
\end{tabular}
\end{itemize}

\subsection*{Type \textbf{G}: $(\g{g}_2,\g{su}(2)\oplus\g{su}(2))$, \quad $\nu=2$.}
\begin{itemize}
\item 
\begin{tabular}{@{}b{\bulletspacest}@{}b{\bulletspacend}@{}b{\bulletspacerd}@{}}
Non-Hermitian. & $\bullet\;\,\mathcal{J}\cap\bar{C}=\{-\h_2\}$. & $\bullet\;\,N=1$.
\end{tabular}
\end{itemize}

\section{Projecting FKM-foliations}\label{sec:FKM}
The purpose of this section is analogous to that of Section~\ref{sec:homogeneous}, but here we will deal with FKM-foliations instead of homogeneous polar foliations. Our goal will be to classify the complex structures preserving FKM-foliations under the hypothesis $m_1\leq m_2$. For the notation concerning FKM-foliations, we refer the reader to \S\ref{subsec:auto_FKM}.

In view of Theorem~\ref{th:reduction}, among all homogeneous polar foliations, only those arisen from inner symmetric spaces descend to the corresponding complex projective spaces. However, the behaviour of FKM-foliations is different: all of them can be projected. The idea behind the proof of this fact is very simple and already appeared in the original paper \cite[\S6.2]{FKM81}.

\begin{theorem}
Each FKM-foliation $\cal{F}_\cal{P}$ admits a complex structure preserving $\cal{F}_\cal{P}$. 
\end{theorem}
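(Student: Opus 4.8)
The plan is to exhibit an explicit complex structure built directly from the Clifford system. Since every FKM-foliation has $m\geq 1$, the Clifford system $(P_0,\dots,P_m)$ contains the two symmetric anticommuting involutions $P_0$ and $P_1$; set $J:=P_0P_1$. Then $J$ is skew-symmetric, since $(P_0P_1)^\top=P_1P_0=-P_0P_1$; it is orthogonal, being a product of the orthogonal matrices $P_0$ and $P_1$ (each $P_i$ is a symmetric involution); and $J^2=P_0P_1P_0P_1=-P_0^2P_1^2=-\Id$. Hence $J$ is a complex structure on $V$, and the content of the theorem is that $J$ preserves $\cal{F}_\cal{P}$, i.e.\ that every leaf of $\cal{F}_\cal{P}$ is a union of Hopf circles of $J$. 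Note that the Hopf circle of $J$ through a point $v\in S(V)$ is exactly the orbit $\{g_t(v):t\in\R\}$ of $v$ under the one-parameter group $g_t:=(\cos t)\,\Id+(\sin t)\,J$.

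The key step is therefore to show that each $g_t$ lies in $\Aut(\cal{F}_\cal{P})$. A one-line Clifford computation shows that conjugation by $g_t$ fixes $P_i$ for $i\geq 2$ and rotates $(P_0,P_1)$ through the angle $2t$ in the plane they span; hence $g_t^{-1}\cal{P}g_t=\cal{P}$, so $\cal{F}_\cal{P}=\cal{F}_{g_t^{-1}\cal{P}g_t}$ and therefore $g_t\in\Aut(\cal{F}_\cal{P})$. Equivalently, $g_t=P_0\bigl((\cos t)P_0+(\sin t)P_1\bigr)$ is a product of two unit vectors of the Clifford sphere $S(\cal{P})$, so $g_t\in\Spin(\cal{P})\subseteq\Pin(\cal{P})\subseteq\Aut(\cal{F}_\cal{P})$ by \S\ref{subsec:auto_FKM}.

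To finish I would invoke the fact, recalled in \S\ref{subsec:auto_FKM}, that the two focal submanifolds of $\cal{F}_\cal{P}$ are never congruent, so that every automorphism of $\cal{F}_\cal{P}$ --- in particular every $g_t$ --- maps each leaf of $\cal{F}_\cal{P}$ onto itself. Hence, for each $v\in S(V)$, the Hopf circle $\{g_t(v):t\in\R\}$ of $J$ through $v$ lies in the leaf of $\cal{F}_\cal{P}$ through $v$; that is, the leaves of $\cal{F}_\cal{P}$ are foliated by the Hopf circles of $J$, which is exactly what it means for $J$ to preserve $\cal{F}_\cal{P}$. Alternatively, $\{g_t\}_t$ is a connected subgroup of $\Aut(\cal{F}_\cal{P})$ fixing every leaf, hence lies in the group $\rho(K)$ of \S\ref{subsec:preserving}, and Proposition~\ref{prop:complex_structure}(i) applies to $J$ directly. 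There is essentially no obstacle once the element $P_0P_1$ is singled out; the single point worth stating carefully is that $P_0P_1\in\Aut(\cal{F}_\cal{P})$ does not by itself imply that $J$ preserves $\cal{F}_\cal{P}$ --- one also needs that $\Aut(\cal{F}_\cal{P})$ does not permute the leaves, and that is precisely where the non-congruence of the focal submanifolds is used.
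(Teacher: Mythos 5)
Your proof is correct and follows essentially the same route as the paper: both take $J=P_0P_1$ and exploit the fact that $(\cos t)\Id+(\sin t)J=\bigl((\cos t)P_1+(\sin t)P_0\bigr)P_1$ acts through elements of the Clifford sphere. The paper closes the argument in one line by computing $F(\cos(t)x+\sin(t)Jx)=F(x)$ for the Cartan--M\"unzner polynomial, which directly shows each level set (leaf) is preserved, whereas you reach the same conclusion by passing through $\Spin(\cal{P})\subset\Aut(\cal{F}_\cal{P})$ and the non-congruence of the focal submanifolds --- a valid but slightly longer packaging of the same idea.
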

\begin{proof}
Let $(P_0,\dots,P_m)$ be a Clifford system defining $\cal{F}_\cal{P}\subset S^{2n+1}$. Let $F$ be its Cartan-M\"unzner polynomial. Define $J=P_0P_1$, which is a complex structure on $\R^{2n+2}$. Then
\[
F(\cos (t)x +\sin (t)Jx)=F\left((\cos(t)P_1+\sin(t)P_0)P_1x\right)=F(P_1x)=F(x)
\]
for all $x\in\R^{2n+2}$, since $F$ is invariant under $S(\cal{P})$. Hence, $J$ preserves $\cal{F}_\cal{P}$.
\end{proof}

Let $(P_1,\dots,P_m)$ be a Clifford system on $V=\R^{2n+2}$ defining $\cal{F}_\cal{P}$ and satisfying $m_1\leq m_2$. In view of Theorems~\ref{th:auto_FKM_1} and~\ref{th:auto_FKM_2}, there is an effective representation $\rho\colon K\to\O(V)$ such that $\rho(K)$ is the maximal connected group of automorphisms of $\cal{F}_\cal{P}$ preserving the leaves (in fact, $\rho(K)$ is the identity connected component of $\Aut(\cal{F}_\cal{P})$), $K=\Spin(m+1)\cdot H$ is a direct product modulo a finite subgroup, and $H$ is the identity connected component of the corresponding group in Theorem~\ref{th:U+}. The explicit description of $\rho$ as a tensor product of the spin representation of $\Spin(m+1)$ and the standard representation of the classical group $H$ is given in Theorem~\ref{th:auto_FKM_2}.  

In this situation, the results of Section~\ref{sec:foliations} are applicable. Thus, fix a maximal abelian subalgebra $\g{t}$ of the compact Lie algebra $\g{k}$. Let $\Delta_\g{k}$ be the set of roots of $\g{k}$ with respect to $\g{t}$, and $\Delta_V$ the set of weights of $\rho_*^\C$.
We have that $\g{k}=\g{so}(m+1)\oplus\g{h}$, where $\g{h}$ is equal to
\[
\begin{array}{rl@{\qquad\qquad}rl}
\g{so}(k), \qquad & \text{if } m\equiv 1,7\,(\mod 8), 
\\
\g{u}(k), \qquad & \text{if } m\equiv 2,6\,(\mod 8), & \g{so}(k_+)\oplus \g{so}(k_-), \qquad & \text{if } m\equiv 0\,(\mod 8),
\\
\g{sp}(k), \qquad & \text{if } m\equiv 3,5\,(\mod 8), & \g{sp}(k_+)\oplus \g{sp}(k_-), \qquad & \text{if } m\equiv 4\,(\mod 8).
\end{array}
\]
We present now some well-known information about the roots of $\g{k}$ and the weights of $\rho_*^\C$. It can be obtained for example from \cite[Ch. IX]{Si96} (cf. \cite[p.~683--685]{Kn02}). 

Let $p$, $q$ be the ranks of the Lie algebras $\g{so}(m+1)$ and $\g{h}\in\{\g{so}(k),\g{u}(k),\g{sp}(k)\}$, respectively, and $q_\pm$ the rank of $\g{so}(k_\pm)$ or $\g{sp}(k_\pm)$ as appropriate. Set $\g{t}=\g{t}_s\oplus\g{t}_\g{h}$, where $\g{t}_s$ and $\g{t}_\g{h}$ are maximal abelian subalgebras of $\g{so}(m+1)$ and $\g{h}$, respectively. Some of the following assertions may need a rescaling of the inner product on the irreducible factors of $\g{k}$.

Let $\{\alpha^s_1,\dots,\alpha^s_p\}$ and $\{\alpha_1,\dots,\alpha_q\}$ be systems of simple roots for $\g{so}(m+1)$ and $\g{h}\in\{\g{so}(k),\g{u}(k),\g{sp}(k)\}$, respectively. There is an orthonormal basis $\{\w^s_1,\dots,\w^s_p\}$ of $\g{t}_s^*$ so that $\alpha^s_i=\w^s_i-\w^s_{i+1}$ for $i=1,\dots, p-1$, and $\alpha^s_p=\w^s_p$ if $m$ is even, or $\alpha^s_p=\w^s_{p-1}+\w^s_p$ if $m$ is odd. The weights of the spin representation $\rho_s$ of $\g{so}(m+1)$ are $\frac{1}{2}(\pm \w^s_1\pm\dots\pm \w^s_p)$.

If $\g{h}=\g{so}(k)$, there is an orthonormal basis $\{\w_1,\dots,\w_q\}$ of $\g{t}_\g{h}^*$ such that $\alpha_i=\w_i-\w_{i+1}$ for $i=1,\dots, q-1$, and $\alpha_q=\w_q$ if $k$ is odd, or $\alpha_q=\w_{q-1}+\w_q$ if $k$ is even. The weights of the standard representation $\rho_{\g{so}(k)}$ of $\g{so}(k)$ are $\pm \w_1,\dots,\pm\w_q$ if $k$ is even, or $\pm \w_1,\dots,\pm\w_q,0$ if $k$ is odd.
Then $\rho_*^\C\cong\rho_s\otimes_\C\rho_{\g{so}(k)}$, so its weights are all possible sums of weights of $\rho_s$ with weights of $\rho_{\g{so}(k)}$.

If $\g{h}=\g{u}(k)$, then $k=q$. There exists an orthonormal basis $\{\w_1,\dots,\w_k\}$ of $\g{t}_\g{h}^*$ such that $\alpha_i=\w_i-\w_{i+1}$ for $i=1,\dots, k-1$, and such that the weights of the standard representation $\rho_{\g{u}(k)}$ of $\g{u}(k)$ are $\w_1,\dots,\w_k$. Let $(\cdot)^\R$ denote realification. Since $\rho_*\cong\eta^\R$ with $\eta=\rho_s\otimes_\C\rho_{\g{u}(k)}$, then $\rho_*^\C\cong(\eta^\R)^\C\cong\eta\oplus\bar{\eta}$, where $\bar{\eta}$ stands for the complex conjugate or contragredient representation of $\eta$. The weights of $\rho_*^\C$ are then all possible sums of weights of $\rho_s$ with weights of $\rho_{\g{u}(k)}$, and the negatives of these sums. 

If $\g{h}=\g{sp}(k)$, then $k=q$ and there is an orthonormal basis $\{\w_1,\dots,\w_k\}$ of $\g{t}_\g{h}^*$ such that $\alpha_i=\w_i-\w_{i+1}$ for $i=1,\dots, k-1$, and $\alpha_k=2\w_k$. The weights of the standard representation $\rho_{\g{sp}(k)}$ are $\pm \w_1,\dots,\pm\w_k$.
In this case, $\rho_*^\C$ is equivalent to $\rho_s\otimes_\C\rho_{\g{sp}(k)}$, so its weights are the sums of weights of $\rho_s$ with weights of $\rho_{\g{sp}(k)}$.

If $\g{h}\in\{\g{so}(k_+)\oplus \g{so}(k_-),\g{sp}(k_+)\oplus \g{sp}(k_-)\}$, one can consider a basis $\{\w_1^+,\dots,\w_{q_+}^+\}\cup\{\w_1^-,\dots,\w_{q_-}^-\}$ of $\g{t}_\g{h}^*$ so that one can express the roots of $\g{h}$ and the weights of $\rho_{\g{so}(k_\pm)}$ or $\rho_{\g{sp}(k_\pm)}$ in terms of the $\w_i^\pm$ in a completely analogous way as above. Moreover, $\rho_*^\C$ is isomorphic to $(\rho_s\otimes_\C \rho_{\g{so}(k_+)}) \oplus (\rho_s\otimes_\C \rho_{\g{so}(k_-)})$ or to $(\rho_s\otimes_\C \rho_{\g{sp}(k_+)}) \oplus (\rho_s\otimes_\C \rho_{\g{sp}(k_-)})$ accordingly.

Let us consider the bases of $\g{t}$ dual to the bases of $\g{t}^*$ that we have defined. Let them be $\{\e^s_1,\dots,\e^s_p\}\cup\{\e_1,\dots,\e_q\}$ or $\{\e^s_1,\dots,\e^s_p\}\cup\{\e^+_1,\dots,\e^+_{q_+}\}\cup\{\e^-_1,\dots,\e^-_{q_-}\}$ depending on $\g{h}$. By definition, $\w^s_i(\e^s_j)=\delta_{ij}$, $\w_i(\e_j)=\delta_{ij}$, and $\w^\pm_i(\e^\pm_j)=\delta_{ij}$.

It follows from the above discussion that the lowest weights of $\rho_*^\C$ are:
\begin{itemize}
\item If $m\equiv 0\,(\mod 8)$: $\lambda^\pm=-\frac{1}{2}(\w^s_1+\dots +\w^s_p)-\w^\pm_1$ if $k_\pm\geq 2$, and also $\mu^\pm=-\frac{1}{2}(\w^s_1+\dots +\w^s_p)+\w^\pm_1$ if $k_\pm= 2$, or $\lambda^0=-\frac{1}{2}(\w^s_1+\dots +\w^s_p)$ if $k_\pm=1$. 
\item If $m\equiv 1,7\, (\mod 8)$: $\lambda^\pm=-\frac{1}{2}(\w^s_1+\dots +\w^s_{p-1}\pm\w^s_p)-\w_1$ if $k\geq 3$, while $\lambda^\pm=-\frac{1}{2}(\w^s_1+\dots +\w^s_{p-1}\pm\w^s_p)-\w_1$ and $\mu^\pm=-\frac{1}{2}(\w^s_1+\dots +\w^s_{p-1}\pm\w^s_p)+\w_1$ if $k=2$, or $\lambda^\pm=-\frac{1}{2}(\w^s_1+\dots +\w^s_{p-1}\pm\w^s_p)$ if $k=1$.
\item If $m\equiv 2,6\, (\mod 8)$: $\lambda^+=-\frac{1}{2}(\w^s_1+\dots +\w^s_p)-\w_1$ and $\lambda^-=-\frac{1}{2}(\w^s_1+\dots +\w^s_p)+\w_k$.
\item If $m\equiv 3,5\, (\mod 8)$: $\lambda^\pm=-\frac{1}{2}(\w^s_1+\dots +\w^s_{p-1}\pm\w^s_p)-\w_1$.
\item If $m\equiv 4\, (\mod 8)$: $\lambda^\pm=-\frac{1}{2}(\w^s_1+\dots +\w^s_p)-\w^\pm_1$ if $k_\pm\neq 0$.
\end{itemize}

At this point, it is probably convenient to have in mind how the lowest weight diagrams associated to FKM-foliations look like. 
These can be obtained easily from the information above. We have included a generic picture of them in Table~\ref{table:FKM_diagrams}. 

\begin{remark}\label{rem:exceptional_diagrams}
As follows from the above description, the number of lowest weights associated to FKM-foliations satisfying $m_1\leq m_2$ may vary from one to four depending on $m$, $k$, and $k_\pm$. Although generically the number is one or two and the lowest weight diagram takes the corresponding form shown in Table~\ref{table:FKM_diagrams}, for small values (i.e.\ up to $2$) of $k$ or $k_\pm$ there can be three or four lowest weights, and the diagram may adopt a somewhat different form (this happens also if $m=1$). Just to show a pair of examples: the diagrams of the cases $m\equiv 0\,(\mod 8)$, $k_+=2$, $k_-=1$, and $m=1$, $k=4$ are, respectively
\begin{center}
\begin{tabular}{c@{\qquad \qquad}c@{.}}
\begin{tikzpicture}[node distance=\nodedistance+0.1,g/.style={circle,inner sep=\circleinnersep,draw},lw/.style={circle,inner sep=\circleinnersep,fill=black, draw}]
\node[g] (a1s)[label=below:$\alpha^s_1$] {};
\node[g] (a2s) [right=of a1s, label=below:$\alpha^s_2$] {}
edge [] (a1s);
\node[] (a3s) [right=of a2s] {};
\node[g] (ap2) [right=of a3s, label=below:$\alpha^s_{p-2}$] {};
\node[g] (ap1) [right=of ap2, label=below:$\alpha^s_{p-1}$] {}
edge [] (ap2);
\node[g] (ap) [right=of ap1, label=below:$\alpha^s_p$] {};
\draw ($(ap1)!.65!(ap)$) -- ($(ap1)!.65!(ap)+(-0.3,0.2)$);
\draw ($(ap1)!.65!(ap)$) -- ($(ap1)!.65!(ap)+(-0.3,-0.2)$);
\draw [] (ap1.north east) to (ap.north west);
\draw [] (ap1.south east) to (ap.south west);
\draw ($(a2s)!.5!(a3s)$) -- (a2s);
\draw ($(ap2)!.5!(a3s)$) -- (ap2);
\draw [dashed] ($(a2s)!.5!(a3s)$) -- ($(ap2)!.5!(a3s)$);
%right hand side branch:
\node[lw] (lw+) at ($(ap)+(45:0.9)$) [label=right:$\lambda^+$] {}
edge [] (ap);
\node[lw] (lw-) at ($(ap)+(0:0.9)$) [label=right:$\lambda^-$] {}
edge [] (ap);
\node[lw] (lw-) at ($(ap)+(-45:0.9)$) [label=right:$\lambda^0$] {}
edge [] (ap);
\end{tikzpicture}
&
\begin{tikzpicture}[node distance=\nodedistance+0.1,g/.style={circle,inner sep=\circleinnersep,draw},lw/.style={circle,inner sep=\circleinnersep,fill=black, draw}]
\node[g] (a1)[label=left:$\alpha_1$] {};
\node[lw] (lw+) [right=of a1, label=right:$\lambda^+$] {}
edge [] (a1);
\node[g] (a2) [below=of lw+, label=right:$\alpha_2$] {}
edge [] (lw+);
\node[lw] (lw-) [left=of a2, label=left:$\lambda^-$] {}
edge [] (a2)
edge [] (a1);
\end{tikzpicture}
\end{tabular}
\end{center}
These pecularities should be taken into account in what follows.
\end{remark}

The last ingredient to address the classification is the following result.

\begin{theorem}\label{th:Out_FKM}
Let $\cal{F}$ be an FKM-foliation satisfying $m_1\leq m_2$. Then $\Out_{\cal{F}}^\pm(\Delta_\g{k},\Delta_V)$ is isomorphic to the group of automorphisms of the lowest weight diagram of $\cal{F}$. The correspondence is the natural one given in Proposition~\ref{prop:auto_diagram}.
\end{theorem}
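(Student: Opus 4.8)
The plan is to establish the claimed isomorphism by leveraging Proposition~\ref{prop:auto_diagram} together with a careful reading of the structure of $\Aut(\cal{F}_\cal{P})$ obtained in \S\ref{subsec:auto_FKM}. Recall that Proposition~\ref{prop:auto_diagram} already gives an injective homomorphism from $\Out(\Delta_\g{k},\Delta_V)$ to the group of automorphisms of the lowest weight diagram, provided the simple roots and lowest weights span $\g{t}^*$; under the hypothesis $m_1\leq m_2$ this spanning condition holds because the lowest weights of $\rho_s$ (namely $-\frac12(\w^s_1+\dots\pm\w^s_p)$) together with the simple roots $\alpha^s_i=\w^s_i-\w^s_{i+1}$ recover all of $\g{t}_s^*$, and on the $\g{h}$-side the lowest weights $-\w_1$ (or $\pm\w_1$, $\w_k$, etc.) together with the simple roots $\alpha_i=\w_i-\w_{i+1}$ recover $\g{t}_\g{h}^*$ as well; the small-$k$ exceptions of Remark~\ref{rem:exceptional_diagrams} are checked by hand. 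So the first step is to record that injectivity, and the content of the theorem reduces to (a) showing that $\Out_{\cal{F}}^\pm(\Delta_\g{k},\Delta_V)$ rather than the a priori larger $\Out(\Delta_\g{k},\Delta_V)$ surjects onto the diagram automorphisms, and (b) showing that the map actually lands in $\Out_{\cal{F}}^\pm$, i.e.\ every diagram symmetry is realized by an element of $\Aut(\g{k},\cal{F})$ preserving $\g{t}$, together with the $-\Id_\g{t}$ factor.

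Next I would analyze the group of automorphisms of the lowest weight diagram directly from its shape (Table~\ref{table:FKM_diagrams}). In the generic situation the diagram is a $B_p$- or $D_p$-type string for $\g{so}(m+1)$ attached at its end to a string for $\g{h}$, with one or two black nodes glued at the junction. Any graph automorphism must preserve each of the two Dynkin pieces (they have different local structure) and the black nodes; hence it decomposes as a pair of automorphisms, one of the $\g{so}(m+1)$-diagram and one of the $\g{h}$-diagram, compatible on the black nodes. For $\g{so}(m+1)$ with $m$ odd the $D_p$-diagram has its order-two fork symmetry, which on weights is precisely the sign flip $\w^s_p\mapsto -\w^s_p$ interchanging the two half-spin lowest weights; this is realized inside $\Pin(m+1)\setminus\Spin(m+1)$, hence by an element of $\Aut(\cal{F}_\cal{P})$ (Theorem~\ref{th:auto_FKM_1}(i)), which normalizes $\g{t}$. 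For $m$ even the $B_p$-diagram has no nontrivial symmetry. On the $\g{h}$-side: $\g{so}(k)$ of type $D_q$ contributes a fork flip realized by $\O^-(k)\subset\U^+(\cal{P})$ (cf.\ $\O(k)\subset\Aut(\cal{F}_\cal{P})$ via Theorem~\ref{th:auto_FKM_2}); $\g{u}(k)$ of type $A_{k-1}$ contributes the order-two diagram flip realized by complex conjugation on $\C^k$, which sits in $\U(k)\rtimes\langle\text{conj}\rangle$—and here the $-\Id_\g{t}$ of $\Out_{\cal{F}}^\pm$ enters, since conjugation corresponds to combining an inner Weyl element with $-\Id$; $\g{sp}(k)$ of type $C_q$ has no diagram symmetry. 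In the two-component cases $m\equiv 0,4\,(\mod 8)$ there is in addition, when $k_+=k_-$, the swap of the two black nodes $\lambda^+\leftrightarrow\lambda^-$ and the two $\g{h}$-factors; this is realized by the transformation $\tau$ of Proposition~\ref{prop:U-}, which lies in $\U^-(\cal{P})$ and hence (since $m$ is even) in $\Aut(\cal{F}_\cal{P})$. I would then observe that every generator of the diagram automorphism group arises this way, so the correspondence is onto.

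The main obstacle, and where the argument needs the most care, is matching these ``external'' symmetries with elements of $\Aut(\g{k},\cal{F})$ that genuinely \emph{preserve} $\g{t}$ and descend to $\Out_{\cal{F}}^\pm(\Delta_\g{k},\Delta_V)$—as opposed to merely normalizing $\g{t}$ up to an inner Weyl-group element. For each candidate orthogonal transformation $A\in\Aut(\cal{F}_\cal{P})$ found above (an element of $\Pin^-$, of $\O^-(k)$, of $\tau\,\U^+(\cal{P})$, or a conjugation) one must either choose $A$ to lie in $N_K(T)$ where $T=\rho(\exp\g{t})$, or precompose with a suitable inner automorphism and, if necessary, with $-\Id_\g{t}$, exactly as in the proof of Proposition~\ref{prop:invariant_torus}. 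Concretely: $\phi_A\in\Aut(\g{k})$ normalizes $\g{t}$ after an inner adjustment; then $\phi_A\rvert_\g{t}$ gives an automorphism of $\Delta_\g{k}$ permuting coweights, and by construction it fixes a chamen one can arrange it to lie in $\Out$; finally one checks its action on the set of black nodes and edge labels coincides with the chosen diagram symmetry. The bookkeeping of edge labels is routine since all edges between a black node and the junction white node carry the default label $-1$ (the lowest weights pair with the end simple roots in the standard way), and the only nonstandard labels are the internal Dynkin-diagram labels, which are automatically preserved by any graph automorphism. Assembling the cases—odd $m$ versus even $m$, and the five congruence classes of $m$ modulo $8$, with the small-$k$/$k_\pm$ exceptions of Remark~\ref{rem:exceptional_diagrams} handled separately—then yields that $\Out_{\cal{F}}^\pm(\Delta_\g{k},\Delta_V)$ maps isomorphically onto the diagram automorphism group via the natural correspondence of Proposition~\ref{prop:auto_diagram}.
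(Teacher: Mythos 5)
Your proposal is correct and follows essentially the same route as the paper: reduce via Proposition~\ref{prop:auto_diagram} to showing every diagram symmetry is induced by an element of $\Out_{\cal{F}}^\pm(\Delta_\g{k},\Delta_V)$, and then realize each generator case by case — the spin-side fork flip by an element of $\Pin^-(\cal{P})$, the $\g{so}(k)$ fork flip by $\O^-(k)\subset\U^+(\cal{P})$, the $A_{k-1}$ flip by $-\Id_\g{t}$ composed with a Weyl-group element (the paper uses the opposition element directly rather than detouring through complex conjugation, which is the cleaner way to phrase your $\g{u}(k)$ step), and the $k_+=k_-$ swap by the map $\tau$ of Proposition~\ref{prop:U-}. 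The paper's proof is exactly this, with the explicit matrix realizations making the ``preserves $\g{t}$'' issue you flag a non-issue.
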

\begin{proof}
In view of Proposition~\ref{prop:auto_diagram}, it suffices to prove that every symmetry of the lowest weight diagram induces an element in  $\Out_{\cal{F}}^\pm(\Delta_\g{k},\Delta_V)$. We show this by cases, depending on the shape of the diagram. For the sake of clarity, we do the proof for diagrams with generic shape. Only minor changes are needed to deal with low values of $k$, $k_\pm$.

If $m=2p-1$ is odd, then there is an automorphism $\sigma$ of the diagram that switches both lowest weights, also the roots $\alpha^s_{p-1}$ and $\alpha^s_p$, and fixes the other roots. We can assume that $\g{t}_s=\mathrm{span}\{P_0P_1,P_2P_3,\dots,P_{2p-2}P_{2p-1}\}$ and $\e^s_i=P_{2i-2}P_{2i-1}$, $1\leq i\leq p$, where $(P_0,\dots,P_{2p-1})$ is a Clifford system defining $\cal{F}$. Then $\Ad(P_{2p-1})\in\Aut(\g{so}(m+1))$ acts as a reflection on $\g{t}_s$: it fixes $e^s_i$ for $1\leq i\leq p-1$ and changes the sign of $\e^s_p$. Then the trivial extension of $\Ad(P_{2p-1})\rvert_{\g{t}_s}$ to $\g{t}$ belongs to $\Aut_\cal{F}(\Delta_\g{k},\Delta_V)$ by Theorem~\ref{th:auto_FKM_1}, and since it is precisely induced by $\sigma$, it also belongs to $\Out_\cal{F}(\Delta_\g{k},\Delta_V)$.

If $m\equiv 1,7\,(\mod 8)$ and $k=2q$ is even, there is a symmetry $\varphi$ of the diagram that fixes all roots and lowest weights, except $\alpha_{q-1}$ and $\alpha_q$, which are interchanged. It is not restrictive to assume that each generator $\e_i$ of $\g{t}_\g{h}$ is the matrix of $\g{so}(k)$ with $1$ in the position $(2i,2i-1)$, $-1$ in the position $(2i-1,2i)$, and all other entries vanish. Then the diagonal matrix $A\in\O(k)$ with entries $(1,\dots,1,-1)$ satisfies that $\Ad(A)\e_i=\e_i$, $1\leq i\leq q-1$, and $\Ad(A)\e_q=-\e_q$. By Theorem~\ref{th:auto_FKM_1}, the trivial extension of $\Ad(A)\rvert_{\g{t}_\g{h}}$ to $\g{t}$, which is precisely induced by $\varphi$, belongs to $\Out_\cal{F}(\Delta_\g{k},\Delta_V)$.
The cases $m\equiv 0\,(\mod 8)$ with $k_\pm$ even can be tackled with a similar argument.

If $m\equiv 2,6 \,(\mod 8)$, the diagram has only one symmetry $\sigma$, which fixes all roots $\alpha_j^s$, interchanges both lowest weights, and switches the roots $\alpha_i$ and $\alpha_{q-i}$, for $1\leq i\leq q-1$ (equivalently $\sigma(\e_i)=-\e_{q+1-i}$, $1\leq i\leq q$). The opposition element $op$ of the Weyl group of $\g{u}(k)$ sends each $\e_i$ to $\e_{q+1-i}$ (see \cite[p.~88]{Sa90}). Denoting also by $op$ its trivial extension from $\g{t}_\g{h}$ to $\g{t}$, we have that $op\in W(\Delta_\g{k})\subset \Aut_\cal{F}(\Delta_\g{k},\Delta_V)$, so $\sigma=- op\in\Out_\cal{F}^\pm(\Delta_\g{k},\Delta_V)$.

Let now $m\equiv 0\, (\mod 4)$ with $k_+=k_-$. After a suitable choice of $\g{t}$, the automorphism $\tau\in\Aut(\cal{F})$ defined in Proposition~\ref{prop:U-} determines an element $\varphi_\tau\in\Out_\cal{F}(\Delta_\g{k},\Delta_V)$. This $\varphi_\tau$ comes induced by the symmetry of the diagram that switches both lowest weights, interchanges $\alpha^+_i$ with $\alpha^-_i$ for $1\leq i\leq q_+=q_-$, and fixes the other roots $\alpha^s_j$.

Finally, note that the automorphisms of the lowest weight diagrams that we have considered generate the symmetry group of such diagrams, from where the result follows.
\end{proof}

\subsection{Classification of the complex structures} \label{subsec:classification_FKM}
We can now address the case-by-case classification of the complex structures preserving FKM-foliations $\cal{F}=\cal{F}_\cal{P}$ satisfying $m_1\leq m_2$. Propositions~\ref{prop:plus_minus_1}, \ref{prop:criterion_Out_F}, and Theorem~\ref{th:Out_FKM} make this work straightforward. For each case, we provide the following information: a set of simple roots for $\g{k}$ and the set of weights $\Delta_V$ (in accordance with the notation introduced above in this section); the algebraic conditions for a generic element $T=\sum_{i=1}^p x^s_i\e^s_i+\sum_{j=1}^q x_j\e_j\in\g{t}$ (or $T=\sum_{i=1}^p x^s_i\e^s_i+\sum_{j=1}^{q_+} x^+_j\e^+_j +\sum_{j=1}^{q_-} x^-_j\e^-_j$ if $m\equiv 0\, (\mod 4)$) to belong to $\cal{J}\cap\bar{C}$; the set $\cal{J}\cap\bar{C}$; a set of generators of $\Out_\cal{F}^\pm(\Delta_\g{k},\Delta_V)$ (if needed); the action of these generators on $\cal{J}\cap\bar{C}$ (if needed); and, finally, the value $N=N(\cal{F}_\cal{P})$ of different complex structures up to congruence of the corresponding projected foliations on the complex projective space.

\subsection*{Type $m\equiv 0\, (\mod 8)$ with $k_+=2q_+$, $k_-=2q_-$ even}
\begin{itemize}
\item Simple roots: $\w^s_1-\w^s_2, \dots, \w^s_{p-1}-\w^s_p,\w^s_p$; $\w^+_1-\w^+_2,\dots,\w^+_{q_+-1}-\w_{q_+},\w^+_{q_+-1}+\w_{q_+}$; $\w^-_1-\w^-_2,\dots,\w^-_{q_--1}-\w_{q_-},\w^-_{q_--1}+\w_{q_-}$.

\item Weights: $\frac{1}{2}(\pm \w^s_1\pm\dots\pm\w^s_p)\pm \w^+_j$ for  $j=1,\dots, q_+$, and $\frac{1}{2}(\pm \w^s_1\pm\dots\pm\w^s_p)\pm \w^-_j$ for  $j=1,\dots, q_-$.

\item Conditions for $\cal{J}\cap\bar{C}$: $x^s_1\geq \dots\geq x^s_p\geq 0$, $x^+_1\geq \dots\geq x^+_{q_+-1}\geq|x^+_{q_+}|$, $x^-_1\geq \dots\geq x^-_{q_--1}\geq |x^-_{q_-}|$, and $\frac{1}{2}(\pm x^s_1\pm\dots\pm x^s_p)\pm x^\pm_j\in\{\pm 1\}$ for all combination of signs and all possible $j$.

\item $\mathcal{J}\cap\bar{C}= \{ 2\e^s_1, 
\sum_{j=1}^{q_+}\e^+_j +\sum_{j=1}^{q_-}\e^-_j, 
-\e^-_{q_-}+\sum_{j=1}^{q_+}\e^+_j +\sum_{j=1}^{q_--1}\e^-_j ,
  -\e^+_{q_+}+ \sum_{j=1}^{q_+-1}\e^+_j+\sum_{j=1}^{q_-}\e^-_j,
-\e^+_{q_+}-\e^-_{q_-}+ \sum_{j=1}^{q_+-1}\e^+_j +\sum_{j=1}^{q_--1}\e^-_j \}$.

\item Generators of $\Out_\cal{F}^\pm(\Delta_\g{k},\Delta_V)$:  
\begin{itemize}
\item[$\circ$] $\varphi^\pm$ (only if $k_\pm\geq 2$): $\e^\pm_{q_\pm}\leftrightarrow-\e^\pm_{q_\pm}$, and fixes the other $\e^+_j$, $\e^-_j$, and $\e^s_i$.
\item[$\circ$] $\tau$ (only if $k_+=k_-$): $\e^+_{j}\leftrightarrow \e^-_{j}$, and fixes the $\e^s_i$.
\end{itemize}
	
\item Action on $\mathcal{J}\cap\bar{C}$: the group generated by $\varphi^+$, $\varphi^-$, and $\tau$ (some of these may not exist) fixes $2\e^s_1$ and acts transitively on the other elements.

\item $N=2$.
\end{itemize}

\subsection*{Type $m\equiv 0\, (\mod 8)$ with $k_+=2q_+$ even, $k_-=2q_-+1$ odd}
\begin{itemize}
\item Simple roots: $\w^s_1-\w^s_2, \dots, \w^s_{p-1}-\w^s_p,\w^s_p$; $\w^+_1-\w^+_2,\dots,\w^+_{q_+-1}-\w_{q_+},\w^+_{q_+-1}+\w_{q_+}$; $\w^-_1-\w^-_2,\dots,\w^-_{q_--1}-\w_{q_-},\w_{q_-}$.

\item Weights: $\frac{1}{2}(\pm \w^s_1\pm\dots\pm\w^s_p)\pm \w^+_j$ for  $j=1,\dots, q_+$, $\frac{1}{2}(\pm \w^s_1\pm\dots\pm\w^s_p)\pm \w^-_j$ for  $j=1,\dots, q_-$, and $\frac{1}{2}(\pm \w^s_1\pm\dots\pm\w^s_p)$. 

\item Conditions for $\cal{J}\cap\bar{C}$: $x^s_1\geq \dots\geq x^s_p\geq 0$, $x^+_1\geq \dots\geq x^+_{q_+-1}\geq|x^+_{q_+}|$, $x^-_1\geq \dots\geq x^-_{q_-}\geq 0$, and $\frac{1}{2}(\pm x^s_1\pm\dots\pm x^s_p)\in\{\pm 1\}$, $\frac{1}{2}(\pm x^s_1\pm\dots\pm x^s_p)\pm x^\pm_j\in\{\pm 1\}$ for all combination of signs and all possible $j$.

\item $\mathcal{J}\cap\bar{C}= \{ 2\e^s_1 \}$, and hence $N=1$.
\end{itemize}

\subsection*{Type $m\equiv 0\, (\mod 8)$ with $k_+=2q_++1$, $k_-=2q_-+1$ odd}
\begin{itemize}
\item Simple roots: $\w^s_1-\w^s_2, \dots, \w^s_{p-1}-\w^s_p,\w^s_p$; $\w^+_1-\w^+_2,\dots,\w^+_{q_+-1}-\w_{q_+},\w_{q_+}$; 
$\w^-_1-\w^-_2,\dots,\w^-_{q_--1}-\w_{q_-},\w_{q_-}$.

\item Weights: $\frac{1}{2}(\pm \w^s_1\pm\dots\pm\w^s_p)\pm \w^+_j$ for  $j=1,\dots, q_+$, $\frac{1}{2}(\pm \w^s_1\pm\dots\pm\w^s_p)\pm \w^-_j$ for  $j=1,\dots, q_-$, and $\frac{1}{2}(\pm \w^s_1\pm\dots\pm\w^s_p)$. 

\item Conditions for $\cal{J}\cap\bar{C}$: $x^s_1\geq \dots\geq x^s_p\geq 0$, $x^+_1\geq \dots\geq x^+_{q_+}\geq 0$, $x^-_1\geq \dots\geq x^-_{q_-}\geq 0$, and $\frac{1}{2}(\pm x^s_1\pm\dots\pm x^s_p)\in\{\pm 1\}$, $\frac{1}{2}(\pm x^s_1\pm\dots\pm x^s_p)\pm x^\pm_j\in\{\pm 1\}$ for all combination of signs and all possible $j$.

\item $\mathcal{J}\cap\bar{C}= \{ 2\e^s_1 \}$, and hence $N=1$.
\end{itemize}

\subsection*{Type $m\equiv 1,7\, (\mod 8)$ with $k=2q$ even}
\begin{itemize}
\item Simple roots: $\w^s_1-\w^s_2, \dots, \w^s_{p-1}-\w^s_p,\w^s_{p-1}+\w^s_p$; $\w_1-\w_2,\dots,\w_{q-1}-\w_q,\w_{q-1}+\w_q$.

\item Weights: $\frac{1}{2}(\pm \w^s_1\pm\dots\pm\w^s_p)\pm \w_j$, for all $j=1,\dots, q$.

\item Conditions for $\cal{J}\cap\bar{C}$: $ x^s_1\geq \dots\geq x^s_{p-1}\geq \left| x^s_p \right|$, $x_1\geq \dots\geq x_{q-1}\geq |x_q|$, and $\frac{1}{2}(\pm x^s_1\pm\dots\pm x^s_p)\pm x_j\in\{\pm 1\}$ for all combination of signs and all $j=1,\dots, q$.

\item $\mathcal{J}\cap\bar{C}= \{ 2\e^s_1 , \e_1+\dots+\e_q, \e_1+\dots+\e_{q-1}-\e_q\}$, and also $-2\e^s_1$ if $m=1$.

\item Generators of $\Out_\cal{F}^\pm(\Delta_\g{k},\Delta_V)$: 
	\begin{itemize}
	\item[$\circ$] $\sigma$: $\e^s_{p}\leftrightarrow -\e^s_p$, and the other $\e^s_i$ and $\e_j$ stay fixed.
	\item[$\circ$] $\varphi$: $\e_q\leftrightarrow -\e_q$, and the other $\e^s_i$ and $\e_j$ stay fixed.	
	\end{itemize}
	
\item Action on $\mathcal{J}\cap\bar{C}$: $\sigma$ fixes all elements if $m\neq 1$ (if $m=1$, then $2\e^s_1\leftrightarrow -2\e^s_1$), while $\varphi$ interchanges $\e_1+\dots+\e_q\leftrightarrow \e_1+\dots+\e_{q-1}-\e_q$ and fixes the other elements.

\item $N=2$.
\end{itemize}

\subsection*{Type $m\equiv 1,7\, (\mod 8)$ with $k=2q+1$ odd}
\begin{itemize}
\item Simple roots: $\w^s_1-\w^s_2, \dots, \w^s_{p-1}-\w^s_p,\w^s_{p-1}+\w^s_p$; $\w_1-\w_2,\dots,\w_{q-1}-\w_q,\w_q$.

\item Weights: $\frac{1}{2}(\pm \w^s_1\pm\dots\pm\w^s_p)\pm \w_j$, for all $j=1,\dots, q$, and $\frac{1}{2}(\pm \w^s_1\pm\dots\pm\w^s_p)$.

\item Conditions for $\cal{J}\cap\bar{C}$: $ x^s_1\geq \dots\geq x^s_{p-1}\geq \left| x^s_p \right|$, $x_1\geq \dots\geq x_q\geq 0$, and $\frac{1}{2}(\pm x^s_1\pm\dots\pm x^s_p)\in\{\pm 1\}$, $\frac{1}{2}(\pm x^s_1\pm\dots\pm x^s_p)\pm x_j\in\{\pm 1\}$ for all combination of signs and all $j=1,\dots, q$.

\item $\mathcal{J}\cap\bar{C}= \{ 2\e^s_1\}$ if $m\neq 1$, or $\mathcal{J}\cap\bar{C}= \{ \pm 2\e^s_1\}$ if $m=1$.

\item $N=1$.
\end{itemize}

\subsection*{Type $m\equiv 2,6\, (\mod 8)$}
\begin{itemize}
\item Simple roots: $\w^s_1-\w^s_2, \dots, \w^s_{p-1}-\w^s_p,\w^s_p$\,;\, $\w_1-\w_2,\dots,\w_{k-1}-\w_k$.

\item Weights: $\frac{1}{2}(\pm \w^s_1\pm\dots\pm\w^s_p)\pm \w_j$, for all $j=1,\dots, k$.

\item Conditions for $\cal{J}\cap\bar{C}$: $x^s_1\geq \dots\geq x^s_p\geq 0$, $x_1\geq \dots\geq x_k$, and $\frac{1}{2}(\pm x^s_1\pm\dots\pm x^s_p)\pm x_j\in\{\pm 1\}$ for all combination of signs and all $j=1,\dots, k$.

\item $\mathcal{J}\cap\bar{C}= \{ 2\e^s_1 \}\cup\{\sum_{j=1}^k \epsilon_j \e_j:\epsilon_j=\pm 1 \text{ for all }j,\, \epsilon_1\geq\dots\geq \epsilon_k\}$.

\item Generator of $\Out_\cal{F}^\pm(\Delta_\g{k},\Delta_V)$:  $\sigma$ switches $\e_j\leftrightarrow -\e_{k+1-j}$ for all $j$, and fixes the $\e^s_i$.
	
\item Action on $\mathcal{J}\cap\bar{C}$: $\sigma$ switches $\sum_{j=1}^k \epsilon_j \e_j\leftrightarrow -\sum_{j=1}^k \epsilon_{k-j+1} \e_j$, and fixes $2\e^s_1$.

\item $N=2+\left[\frac{k}{2}\right]$.
\end{itemize}

\subsection*{Type $m\equiv 3,5\, (\mod 8)$}
\begin{itemize}
\item Simple roots: $\w^s_1-\w^s_2, \dots, \w^s_{p-1}-\w^s_p,\w^s_{p-1}+\w^s_p$; $\w_1-\w_2,\dots,\w_{k-1}-\w_k,2\w_k$.

\item Weights: $\frac{1}{2}(\pm \w^s_1\pm\dots\pm\w^s_p)\pm \w_j$, for all $j=1,\dots, k$.

\item Conditions for $\cal{J}\cap\bar{C}$: $ x^s_1\geq \dots\geq x^s_{p-1}\geq \left| x^s_p \right|$, $x_1\geq \dots\geq x_k\geq 0$, and $\frac{1}{2}(\pm x^s_1\pm\dots\pm x^s_p)\pm x_j\in\{\pm 1\}$ for all combination of signs and all $j=1,\dots, k$.

\item $\mathcal{J}\cap\bar{C}= \{ 2\e^s_1 , \e_1+\dots+\e_k\}$.

\item Generator of $\Out_\cal{F}^\pm(\Delta_\g{k},\Delta_V)$: $\sigma$ switches $\e^s_{p}\leftrightarrow -\e^s_p$, and fixes the other $\e^s_i$ and $\e_j$.
	
\item Action on $\mathcal{J}\cap\bar{C}$: both elements are fixed by $\sigma$.

\item $N=2$.
\end{itemize}

\subsection*{Type $m\equiv 4\, (\mod 8)$}
\begin{itemize}
\item Simple roots: $\w^s_1-\w^s_2, \dots, \w^s_{p-1}-\w^s_p,\w^s_p$; $\w^+_1-\w^+_2,\dots,\w^+_{k_+-1}-\w^+_{k_+},2\w^+_{k_+}$; $\w^-_1-\w^-_2,\dots,\w^-_{k_--1}-\w^-_{k_-},2\w^-_{k_-}$.

\item Weights: $\frac{1}{2}(\pm \w^s_1\pm\dots\pm\w^s_p)\pm \w^+_j$ for $j=1,\dots, k_+$, and $\frac{1}{2}(\pm \w^s_1\pm\dots\pm\w^s_p)\pm \w^-_j$ for $j=1,\dots, k_-$.

\item Conditions for $\cal{J}\cap\bar{C}$: $ x^s_1\geq \dots\geq x^s_{p}\geq 0$, $x^+_1\geq \dots\geq x^+_{k_+}\geq 0$, $x^-_1\geq \dots\geq x^-_{k_-}\geq 0$, and $\frac{1}{2}(\pm x^s_1\pm\dots\pm x^s_p)\pm x^\pm_j\in\{\pm 1\}$ for all combination of signs and all possible $j$.

\item $\mathcal{J}\cap\bar{C}= \{ 2\e^s_1 , \e^+_1+\dots+\e^+_{k_+}+\e^-_1+\dots+\e^-_{k_-}\}$.

\item Generator of $\Out_\cal{F}^\pm(\Delta_\g{k},\Delta_V)$, only if $k_+=k_-$: $\tau$ switches $\e^+_{j}\leftrightarrow \e^-_j$ for all $j$, and fixes the $\e^s_i$.
	
\item Action on $\mathcal{J}\cap\bar{C}$: both elements are fixed by $\tau$.

\item $N=2$.
\end{itemize}

\section{Inhomogeneous isoparametric foliations} \label{sec:homogeneity}
Here we analyze when the projection of an isoparametric foliation to the complex projective space gives rise to a homogeneous foliation. Some curious consequences are also derived.

Let us start with an elementary consideration.

\begin{remark}\label{rem:homogeneity}
The pullback $\cal{F}=\pi^{-1}\cal{G}$ of any homogeneous foliation $\cal{G}$ on $\C P^n$ under the Hopf map $\pi$ is homogeneous. Indeed, consider the maximal connected subgroup $\wt{K}$ of $\U(n+1)$ preserving the leaves of $\cal{G}$; by homogeneity, the orbit foliation of $\wt{K}$ on $\C P^n$ is $\cal{G}$. It follows that the orbit foliation of the action of $\wt{K}$ on $S^{2n+1}\subset\C^{n+1}$ is $\cal{F}$.
\end{remark}

Therefore, every homogeneous isoparametric foliation on $\C P^n$ must be the projection of a homogeneous polar foliation. Then our aim reduces to deciding when the projection to $\C P^n$ of a homogeneous polar foliation $\mathcal{F}_{G/K}$ on $S^{2n+1}$ is homogeneous. The following subtle improvement of \cite[Th.~3.1]{PT99} gives us the solution.

\begin{theorem}\label{th:homogeneity}
Let $(G,K)$ be a compact inner symmetric pair that satisfies the maximality property, 
 $(G,K)=\Pi_{i=1}^r(G_i,K_i)$ its decomposition in irreducible factors, and $\g{g}_i=\g{k}_i\oplus\g{p}_i$ the Cartan decomposition of $G_i/K_i$. Let $J=\ad(X)\rvert_\g{p}$ be a complex structure on $\g{p}=\bigoplus_{i=1}^r\g{p}_i$ that preserves the foliation $\mathcal{F}_{G/K}$, and put $X=X_1+\ldots+X_r$, with $X_i\in \g{k}_i$. Then the following conditions are equivalent:
\begin{itemize}
\item[(i)] The projection of $\mathcal{F}_{G/K}$ to the complex projective space $\C P^n$ determined by $J$ is a homogeneous foliation.
\item[(ii)] The irreducible factors $G_i/K_i$ of $G/K$ are Hermitian or of rank one, and for each Hermitian irreducible factor $G_i/K_i$, $X_i$ belongs to the center of $\g{k}_i$.
\end{itemize}
\end{theorem}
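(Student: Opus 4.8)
The plan is to convert the homogeneity question into a problem on orbit equivalence of linear group actions, decouple it over the irreducible factors of $(G,K)$, and settle each factor using the root‑theoretic description of \S\ref{subsec:classification_homogeneous}.

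\emph{A homogeneity criterion.} First I would show that $(i)$ is equivalent to the statement that $\Ad(Z_K(X)^0)\rvert_\g{p}$ and $\Ad(K)\rvert_\g{p}$ have the same orbits on $\g{p}$, where $Z_K(X)^0$ denotes the identity component of the centralizer of $X$ in $K$. For the forward implication, a homogeneous $\cal{F}:=\pi(\cal{F}_{G/K})$ is, by Remark~\ref{rem:homogeneity}, the orbit foliation of some connected subgroup $\hat{L}$ of $\U(n+1)=Z_{\O(\g{p})}(J)$; since $\hat{L}$ preserves $\cal{F}_{G/K}$ it lies in the identity component of $\Aut(\cal{F}_{G/K})$, which equals $\Ad(K)\rvert_\g{p}$ (Theorem~\ref{th:automPolar} and the discussion preceding it); as $\hat{L}$ centralizes $J=\ad(X)\rvert_\g{p}$ and $(G,K)$ is effective, $\hat{L}\subseteq Z_{\Ad(K)\rvert_\g{p}}(J)=\Ad(Z_K(X))\rvert_\g{p}$, hence, being connected, $\hat{L}\subseteq\Ad(Z_K(X)^0)\rvert_\g{p}$; and the orbits of $\hat{L}$ are the leaves of $\cal{F}_{G/K}$, i.e. the $\Ad(K)\rvert_\g{p}$‑orbits, so together with $Z_K(X)^0\subseteq K$ the two groups have the same orbits. (If one only knows that $\cal{F}$ is the orbit foliation of a possibly disconnected group, the same follows, because its leaves, being $\Ad(K)\rvert_\g{p}$‑orbits, are connected and therefore already orbits of the identity component.) Conversely, if the orbit condition holds, the group generated by $\Ad(Z_K(X)^0)\rvert_\g{p}$ and the Hopf circle $\{\cos t\,\Id+\sin t\,J\}$ is connected, contained in $\U(n+1)$, contains the Hopf circle, and has orbit foliation $\cal{F}_{G/K}$ --- adjoining the circle does not enlarge orbits, since $JP=[X,P]\in[Z_{\g{k}}(X),P]$ is tangent to the $\Ad(Z_K(X)^0)\rvert_\g{p}$‑orbit through $P$ --- so it descends to a group acting on $\C P^n$ with orbit foliation $\cal{F}$.

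\emph{Decoupling and reduction to the irreducible case.} Writing $X=\sum_iX_i$, one has $Z_K(X)^0=\prod_iZ_{K_i}(X_i)^0$ up to a finite subgroup, acting factorwise on $\g{p}=\bigoplus_i\g{p}_i$, so orbits are products; hence the criterion above holds if and only if, for each $i$, $\Ad(Z_{K_i}(X_i)^0)\rvert_{\g{p}_i}$ and $\Ad(K_i)\rvert_{\g{p}_i}$ have the same orbits on $\g{p}_i$. By Proposition~\ref{prop:complex_structure}(iii) every $\ad(X_i)\rvert_{\g{p}_i}$ is a complex structure, so each $(G_i,K_i)$ is inner (Theorem~\ref{th:reduction}). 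It thus remains to prove, for irreducible inner $(G_i,K_i)$ with the maximality property, that this orbit condition holds if and only if $G_i/K_i$ has rank one, or $G_i/K_i$ is Hermitian and $X_i\in Z(\g{k}_i)$. The two ``if'' directions are quick: if $X_i\in Z(\g{k}_i)$ then $Z_{K_i}(X_i)=K_i$; if $G_i/K_i$ has rank one then, under the maximality property, $(G_i,K_i)=(\SO(2m+1),\SO(2m))$ with $\g{p}_i=\R^{2m}$ and standard action, $\ad(X_i)\rvert_{\g{p}_i}$ is a complex structure $J_i\in\g{so}(2m)$, and $Z_{K_i}(X_i)\rvert_{\g{p}_i}=\U(m)$ (the unitary group of $J_i$) acts transitively on $S^{2m-1}$, just like $\SO(2m)$.

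\emph{The ``only if'' direction.} I would argue the contrapositive: suppose $G_i/K_i$ has rank $\ell_i\ge2$ and is either non‑Hermitian or has $X_i\notin Z(\g{k}_i)$. After conjugation by $\Ad(K_i)$ and the Weyl‑group reduction of \S\ref{subsec:congruence}, $X_i$ may be assumed to be one of the explicit elements of $\cal{J}\cap\bar{C}$ determined in Lemma~\ref{lemma:integers} and \S\ref{subsec:classification_homogeneous}; in each of the cases at hand $X_i$ is noncentral, so (compact roots vanish exactly on $Z(\g{k}_i)$) there is $\beta\in\Delta_{\g{k}_i}$ with $\beta(X_i)\ne0$ and $Z_{\g{k}_i}(X_i)=\g{t}_i\oplus\bigoplus_{\alpha\in\Delta_{\g{k}_i},\,\alpha(X_i)=0}\g{k}_{i,\alpha}$ is a proper reductive subalgebra of $\g{k}_i$ whose center contains $\R X_i$. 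One then compares a principal $\Ad(Z_{K_i}(X_i)^0)\rvert_{\g{p}_i}$‑orbit with a principal $\Ad(K_i)\rvert_{\g{p}_i}$‑orbit (of dimension $\dim\g{p}_i-\ell_i$); using $\dim\g{p}_i=\#(\Delta_{\g{g}_i}\setminus\Delta_{\g{k}_i})$, valid because $G_i/K_i$ is inner, one reads off from Table~\ref{table:diagrams} that in most cases $\dim Z_{\g{k}_i}(X_i)<\dim\g{p}_i-\ell_i$, which already rules out orbit equivalence. In the finitely many borderline cases, where the two dimensions coincide so that $Z_{K_i}(X_i)^0$ would have to act almost freely on the principal stratum, one discards orbit equivalence by a case‑specific argument comparing topological invariants or isotropy representations of the principal orbits --- for instance, a principal $\Ad(Z_{K_i}(X_i)^0)$‑orbit would then be a finite quotient of $Z_{K_i}(X_i)^0$ and hence, because of the central circle through $X_i$, would have infinite fundamental group, whereas the corresponding principal $\Ad(K_i)$‑orbit does not. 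This borderline analysis, the only place where genuine case‑by‑case inspection is unavoidable, is the main technical obstacle; combined with the previous steps it yields the equivalence of $(i)$ and $(ii)$, refining \cite[Th.~3.1]{PT99} through the sharp role of the condition $X_i\in Z(\g{k}_i)$ on the Hermitian factors.
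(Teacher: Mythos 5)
Your reduction of (i) to the orbit equivalence of $\Ad(Z_K(X)^0)\rvert_\g{p}$ and $\Ad(K)\rvert_\g{p}$ is correct and is a legitimate variant of the paper's first step (the paper instead quotes \cite[Th.~3.1]{PT99} to produce a connected group acting unitarily and polarly with the right orbits, and then identifies it with a subgroup of $\Ad(K)\rvert_\g{p}$ by maximality); the factorwise decoupling and the implication (ii)$\Rightarrow$(i) also match the paper. The problem is the converse for an irreducible factor of rank at least two, which is the substance of the theorem and which you leave as an acknowledged ``main technical obstacle.'' What has to be shown is that a proper subgroup of the form $Z_{K_i}(X_i)^0$ (with $X_i$ noncentral) can never be orbit equivalent to $K_i$ on $\g{p}_i$; this is exactly the kind of statement that the paper does \emph{not} reprove but imports from \cite[Lemma~3.2]{PT99} together with the Eschenburg--Heintze classification of orbit-equivalent reductions of $s$-representations \cite{EH99mz}, which pins down the only proper subgroups acting with the same orbits and shows they all share the center of $\g{k}_i$.

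Your substitute for those citations does not hold up as stated. The dimension filter genuinely fails in cases you would need: for \textbf{D III} with $p=4$ and $X=-\h_p+2\h_1$ one gets $Z_{\g{k}}(X)\cong\g{u}(1)\oplus\g{u}(3)$ of dimension $10$, equal to $\dim\g{p}-\rank G/K=12-2=10$, so this Hermitian noncentral case is ``borderline.'' And your sample borderline argument --- that a principal $\Ad(Z_{K_i}(X_i)^0)$-orbit has infinite fundamental group while the principal $\Ad(K_i)$-orbit does not --- is false precisely in the Hermitian situation: there $K_i$ itself has a central circle acting freely on the unit sphere of $\g{p}_i$, so $\pi_1(K_i)$ is infinite and the principal $K_i$-orbit can (and in the example above does) have infinite fundamental group as well; even Münzner's restriction $H^1(M,\mathbb{Z}_2)=0$ is not available outside the even-multiplicity cases. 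So the only place where the theorem could actually fail is not covered by any argument you give. To complete the proof along your lines you would either have to carry out the full case-by-case analysis (enumerating all noncentral $X_i\in\cal{J}\cap\bar C$ from Lemma~\ref{lemma:integers} and disproving orbit equivalence for each, with genuinely finer invariants than $\pi_1$ being finite or not), or do what the paper does and invoke \cite[Lemma~3.2]{PT99} and \cite{EH99mz}.
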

\begin{proof}
First assume (ii). If an irreducible factor $(G_i,K_i)$ is Hermitian and $X_i$ belongs to the center $Z(\g{k}_i)$ of $\g{k}_i$, then the adjoint $K_i$-action on $\g{p}_i$ commutes with $J$, that is, $\Ad(K_i)\rvert_\g{p_i}$ consists of unitary transformations with respect to $J$.  The irreducible factors of rank one are of the type $(\SO(2p_i+1),\SO(2p_i))$, with $2p_i=\dim \g{p}_i$. For these factors, the group $\U(p_i)$ of unitary transformations with respect to $\ad(X_i)\rvert_{\g{p}_i}$ acts on $\g{p}_i$ with the same orbits as $\SO(2p_i)$. It follows that there exists a group $\widetilde{K}$ of unitary transformations with respect to $J$ that acts on $\g{p}$ with the same orbits as $\Ad(K)\rvert_\g{p}$. Therefore $\widetilde{K}$ induces an action on $\C P^n$ whose orbits coincide with those of the projection of $\mathcal{F}_{G/K}$.

Assume now (i). Then, there exists a group $K'$ acting polarly on $\C P^n$ and with the leaves of the projection of $\mathcal{F}_{G/K}$ as orbits; the existence of sections intersecting all orbits is a consequence of the polarity of the $s$-representation of $G/K$ (see Proposition~\ref{prop:HopfMap}). It was shown in \cite[Th.~3.1]{PT99} that there exists a connected group $\widetilde{K}$ acting on $\g{p}$ effectively, unitarily with respect to $J$, and polarly, and such that the projection of its orbits yields the orbits of $K'$. Hence, $\widetilde{K}$ and $\Ad(K)\rvert_\g{p}$ act on $\g{p}$ with the same orbits. By maximality, we can identify $\widetilde{K}$ with a subgroup of $\Ad(K)\rvert_\g{p}$.

Let us consider the case when $G/K$ is irreducible. Take a subgroup $H$ of $K$ such that $\Ad(H)\rvert_\g{p}=\widetilde{K}$. Then \cite[Lemma~3.2]{PT99} implies that $G/K$ is Hermitian or $(G,K)=(\SO(2p+1),\SO(2p))$. If $G/K$ is Hermitian, then either $G/K$ is of rank greater than one or $G=\SO(3)$, $K=\SO(2)$; in this last case, $X\in\g{k}=Z(\g{k})$. Assume that $G/K$ is Hermitian of rank greater than one. Since $\widetilde{K}=\Ad(H)\rvert_\g{p}$ acts on $\g{p}$ by unitary transformations with respect to $J=\ad(X)\rvert_\g{p}$, and since the $s$-representation of $(G,K)$ is effective, we have that $X\in Z(\g{h})$. Then, the center of $\g{h}$ cannot be trivial. The main theorem of \cite{EH99mz} implies that $H=K$ unless:
\begin{itemize}
\item $G=\SO(9)$, $K=\SO(2)\times\SO(7)$, $H=\SO(2)\times\mathrm{G}_2$, or
\item $G=\SO(10)$, $K=\SO(2)\times\SO(8)$, $H=\SO(2)\times\mathrm{Spin}(7)$.
\end{itemize}
In both cases, $\dim Z(\g{h})=1$ and $Z(\g{h})=Z(\g{k})$. In any case, $X\in Z(\g{h})=Z(\g{k})$.

Assume now that $G/K$ is reducible. Then $\Ad(K)\rvert_\g{p}$ (and hence $\widetilde{K}$) acts irreducibly on each $\g{p}_i$, $i=1,\ldots,r$; by \cite[Th.~4]{Da85} each one of these actions is polar. For each $i$, let $\widetilde{K}_i$ be a quotient of $\widetilde{K}$ that acts irreducibly, polarly, and effectively on $\g{p}_i$, and with the same orbits as the action of $\widetilde{K}$ on $\g{p}_i$. The actions of $\widetilde{K}_i$ and of $\Ad(K_i)\rvert_{\g{p}_i}$ on $\g{p}_i$ have the same orbits; by maximality we can find a subgroup $H_i$ of $K_i$ such that $\Ad(H_i)\rvert_{\g{p}_i}=\widetilde{K}_i$.
 
Every $\Ad(H_i)\rvert_{\g{p}_i}$ acts unitarily on $\g{p}_i$ with respect to the complex structure $\ad(X_i)\rvert_{\g{p}_i}$, since $\tilde{K}$ acts unitarily on $\g{p}$ with respect to $J$. We can hence apply the argument above for the irreducible case. We obtain that for each $i\in\{1,\ldots,r\}$, $G_i/K_i$ is Hermitian with $X_i\in Z(\g{k}_i)$, or $(G_i,K_i)=(\SO(2p_i+1),\SO(2p_i))$.
\end{proof}

We obtain now some consequences of this result. The first one is straightforward. 

\begin{corollary}\label{cor:inhomogeneous}
Let $(G,K)$ be an irreducible compact inner symmetric pair of rank greater than one satisfying the maximality property, and let $N(\cal{F}_{G/K})$ be as in Subsection~\ref{subsec:classification_homogeneous} (see Table~\ref{table:diagrams} for its concrete value depending on $(G,K)$).

Then, among the $N(\cal{F}_{G/K})$ noncongruent irreducible isoparametric foliations of the complex projective space obtained by projecting $\mathcal{F}_{G/K}$, exactly $N(\cal{F}_{G/K})-1$ of them are inhomogeneous if $G/K$ is Hermitian, whereas all of them are inhomogeneous if $G/K$ is non-Hermitian.
\end{corollary}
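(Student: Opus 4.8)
The plan is to combine the classification of the complex structures preserving $\cal{F}_{G/K}$ carried out in \S\ref{subsec:classification_homogeneous} with the homogeneity criterion of Theorem~\ref{th:homogeneity}. By that classification, the $N(\cal{F}_{G/K})$ noncongruent projected foliations on $\C P^n$ correspond bijectively to the $\sim$-equivalence classes of $\cal{J}\cap\bar{C}$. Since two congruent foliations on $\C P^n$ are simultaneously homogeneous or inhomogeneous --- an isometry of $\C P^n$ conjugates a group acting with the leaves as orbits into another such group --- the property ``the projected foliation is homogeneous'' is constant on each $\sim$-class. So the first step is to reduce the statement to counting those classes $[T]$, $T\in\cal{J}\cap\bar{C}$, for which $\ad(T)\rvert_\g{p}$ yields a homogeneous foliation on $\C P^n$.

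Next I would dispose of the non-Hermitian case. Since $(G,K)$ is irreducible of rank greater than one, it has no rank-one factor and it is not Hermitian; hence condition (ii) of Theorem~\ref{th:homogeneity} fails for every $X\in\cal{J}$, and therefore none of the $N(\cal{F}_{G/K})$ projected foliations is homogeneous. In the Hermitian case, Theorem~\ref{th:homogeneity}(ii) specializes to: $\ad(X)\rvert_\g{p}$ produces a homogeneous foliation if and only if $X\in Z(\g{k})$. Here $Z(\g{k})=(\g{t}')^\perp=\R\h_\nu$ is one-dimensional, and $c\h_\nu\in\cal{J}$ forces $c=\pm 1$ (because the noncompact root $\alpha_\nu$ satisfies $\alpha_\nu(c\h_\nu)=c$), so $\cal{J}\cap Z(\g{k})=\{\pm\h_\nu\}$; recall also that $\pm\h_\nu\in\cal{J}$ was checked in the proof of Theorem~\ref{th:reduction}. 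By Lemma~\ref{lemma:integers}(ii) both $\h_\nu$ and $-\h_\nu$ lie in $\bar{C}$; by Proposition~\ref{prop:criterion_Out}(ii) they are $\sim$-equivalent, and by Proposition~\ref{prop:criterion_Out}(i) no other element of $\cal{J}\cap\bar{C}$ --- which by Lemma~\ref{lemma:integers}(ii) is of the form $-\h_\nu+2\h_i$ with $i\neq\nu$, hence lies outside $\R\h_\nu=(\g{t}')^\perp$ --- is $\sim$-related to $\h_\nu$. Thus $\{\pm\h_\nu\}$ is exactly one of the $N(\cal{F}_{G/K})$ classes, it is the unique homogeneous one, and the remaining $N(\cal{F}_{G/K})-1$ classes give inhomogeneous foliations, as claimed.

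The whole argument is essentially bookkeeping once Theorem~\ref{th:homogeneity} and the classification of \S\ref{subsec:classification_homogeneous} are in place; the only point that needs genuine care is the double assertion that $\cal{J}\cap Z(\g{k})=\{\pm\h_\nu\}$ and that, via the dichotomy in Proposition~\ref{prop:criterion_Out}(i)--(ii), this single pair of vectors accounts for exactly one $\sim$-equivalence class among the $N(\cal{F}_{G/K})$ of them --- no more, by part~(i), and no less, by part~(ii) together with $\h_\nu\in\cal{J}\cap\bar{C}$.
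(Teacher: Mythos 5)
Your argument is correct and follows exactly the route the paper intends: the paper dismisses this corollary as a ``straightforward'' consequence of Theorem~\ref{th:homogeneity}, and your write-up supplies precisely the bookkeeping left implicit there (the reduction to $\sim$-classes in $\cal{J}\cap\bar{C}$, the identification $\cal{J}\cap Z(\g{k})=\{\pm\h_\nu\}$ in the Hermitian case, and the use of Proposition~\ref{prop:criterion_Out}(i)--(ii) to see that this pair forms exactly one class). Nothing is missing.
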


The following result focuses on the existence of inhomogeneous isoparametric foliations depending on the dimension on the ambient complex projective space. Although Theorem~\ref{th:codimension}(i) has been recently proved in \cite[Th.~1.1]{GTY11} by Ge, Tang, and Yan, for the sake of completeness we include here a slightly different proof. The other claims in Theorem~\ref{th:codimension} are new. 

\begin{theorem}\label{th:codimension}
We have:
\begin{itemize}
\item[(i)] $\C P^n$ admits an inhomogeneous isoparametric foliation of codimension one if and only if $n$ is an odd number greater or equal than $3$. 
\item[(ii)] Let $q\in\mathbb{N}$, $q\geq 2$. Then $\C P^n$ admits an irreducible inhomogeneous isoparametric foliation of codimension $q$ if and only if $(q+1)^2\leq 2(n+1)$ and $q+1$ divides $2(n+1)$.
\end{itemize}
In particular, every irreducible isoparametric foliation on $\C P^n$ is homogeneous if and only if $n+1$ is a prime number.
\end{theorem}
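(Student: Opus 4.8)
The strategy is to prove each of (i) and (ii) and then read off the final "prime" statement as an immediate corollary. Both (i) and (ii) are existence/nonexistence statements about inhomogeneous isoparametric foliations on $\C P^n$ of a given codimension, so the proofs have the same skeleton: for nonexistence, use the converse parts of the Main Theorems (every irreducible isoparametric foliation of codimension $>1$ on $\C P^n$ comes from $\cal{F}_{G/K}$ for an inner symmetric space $G/K$ of dimension $2n+2$, and every codimension-one foliation from an FKM-foliation, a rank-two inner symmetric space, or the exceptional $S^{31}$ case), combined with the homogeneity criterion in Corollary~\ref{cor:inhomogeneous} and Theorem~\ref{th:homogeneity}; for existence, exhibit an explicit $G/K$ (or FKM-example) realizing the required codimension with $N(\cal{F}_{G/K})\geq 2$ (or inhomogeneous).

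\textbf{Part (i).} The codimension-one isoparametric foliations on $\C P^n$ lift to isoparametric hypersurfaces on $S^{2n+1}$. By Main Theorem~2 (and Remark~\ref{rem:homogeneity}), an inhomogeneous one exists on $\C P^n$ iff there is an inhomogeneous codimension-one isoparametric foliation on $S^{2n+1}$ preserved by some complex structure, or (the genuinely new phenomenon) a homogeneous $\cal{F}$ on $S^{2n+1}$ two of whose projections are noncongruent with one inhomogeneous — but the latter forces $\cal{F}=\cal{F}_{G/K}$ for a rank-two $G/K$, handled by Corollary~\ref{cor:inhomogeneous}. So I would run through: the rank-two inner symmetric spaces with $N(\cal{F}_{G/K})\geq 2$ (types \textbf{C II}, \textbf{D I}, \textbf{E II} with the appropriate parameter constraints from \S\ref{subsec:classification_homogeneous}) give inhomogeneous codimension-one examples, and the FKM-foliations with the relevant multiplicity pattern give others; in each case one computes $n$ from $2n+2=\dim\g{p}$ (or from $m+1+\dim\g{d}\cdot k$ in the FKM case) and checks that $n$ ranges exactly over odd integers $\geq 3$. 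For the "only if" direction: $n$ even forces $\dim S^{2n+1}\equiv 1\pmod 4$; one checks that for every candidate source foliation either $N=1$ (so the unique projection is the homogeneous one, by Corollary~\ref{cor:inhomogeneous}) or the parity/divisibility obstruction rules it out, and $n=1$ is trivial since $\C P^1$ has only the trivial foliations.

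\textbf{Part (ii).} Here codimension $q\geq 2$, so by the converse of Main Theorem~1 the foliation is the projection of $\cal{F}_{G/K}$ for an irreducible inner $G/K$ of dimension $2n+2$ and rank $q+1$ (the rank equals the codimension of the sphere foliation, which equals $q$ plus one — I should double-check the off-by-one against the normalization in \S\ref{sec:Hopf}, but the Coxeter-rank/codimension relation pins it down). By Corollary~\ref{cor:inhomogeneous}, an inhomogeneous projection exists iff $G/K$ is non-Hermitian, or Hermitian with $N(\cal{F}_{G/K})\geq 2$. Scanning Table~\ref{table:diagrams} / \S\ref{subsec:classification_homogeneous}: the non-Hermitian irreducible inner spaces of a given rank $q+1$ together with the relevant Hermitian ones are, up to the low-rank coincidences, essentially the complex Grassmannians $\Gr_\nu(\C^{p+1})$ and their real/quaternionic analogues, and the numerology $\dim\g{p}=2n+2$ translates precisely into "$q+1$ divides $2(n+1)$" while irreducibility (rank $\leq$ half the relevant dimension, i.e. the Grassmannian being non-degenerate) translates into "$(q+1)^2\leq 2(n+1)$". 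So the bulk of the work is the bookkeeping: list, for each $q+1$, which inner symmetric spaces of that rank admit an inhomogeneous projection, compute $2(n+1)$ for each, and verify the stated arithmetic characterization is exactly the union of the values obtained — and conversely that every $n$ satisfying the two conditions is realized (typically by a suitable $\Gr_2(\C^{p+1})$-type example when $q+1$ is small, or a quaternionic/real Grassmannian otherwise).

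\textbf{The prime corollary and the main obstacle.} Finally, "every irreducible isoparametric foliation on $\C P^n$ is homogeneous iff $n+1$ is prime": if $n+1$ is prime, then no $q\geq 2$ can satisfy "$q+1$ divides $2(n+1)$ and $(q+1)^2\leq 2(n+1)$" — the divisors of $2(n+1)$ are $1,2,n+1,2(n+1)$, and $q+1\in\{2\}$ fails $(q+1)^2\leq 2(n+1)$ once $n\geq 1$, hence (ii) gives no inhomogeneous higher-codimension example; and for codimension one, $n+1$ prime and $>2$ forces $n$ even (so (i) gives nothing) while $n+1=2$ is $\C P^1$. Conversely, if $n+1=ab$ with $2\leq a\leq b$, take $q+1=a$: then $q+1\mid 2(n+1)$ and $(q+1)^2=a^2\leq ab\leq 2ab=2(n+1)$, and if $q\geq 2$ this produces an inhomogeneous foliation by (ii); the remaining case $a=2$, i.e. $q=1$, gives $n+1=2b$ with $b\geq 2$, so $n=2b-1\geq 3$ is odd and (i) applies. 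I expect the \emph{main obstacle} to be Part (ii)'s case analysis: matching the list of inner symmetric spaces of each rank — with all the sporadic low-rank isomorphisms among $\mathbf{A\,III}$, $\mathbf{B\,I}$, $\mathbf{C\,II}$, $\mathbf{D\,I}$, $\mathbf{D\,III}$ and the exceptional types — against the clean Diophantine condition $(q+1)^2\le 2(n+1)$, $\,(q+1)\mid 2(n+1)$, and making sure no value of $n$ is spuriously included or omitted (in particular handling the Hermitian Grassmannians, where $N$ can exceed $1$ so they too contribute inhomogeneous examples, versus the ones with $N=1$ which do not).
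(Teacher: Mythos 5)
Your plan is essentially correct, and for part (ii) and the final ``prime'' statement it coincides with the paper's own argument: reduce to irreducible inner $G/K$ via the converse of Main Theorem~1, invoke Corollary~\ref{cor:inhomogeneous} to decide which $G/K$ contribute inhomogeneous projections, verify on that list that $\rank G/K$ divides $\dim G/K$ and $(\rank G/K)^2\leq\dim G/K$ (with $\rank=q+1$, $\dim=2(n+1)$), and realize the converse by a real Grassmannian $\SO(q+r+1)/\SO(q+1)\times\SO(r)$ with $r=2(n+1)/(q+1)$ -- note that the paper uses this single family uniformly, whereas your suggestion of ``$\Gr_2(\C^{p+1})$-type examples for small $q+1$'' only realizes the cases with $q+1\mid n+1$, so the real Grassmannians are genuinely needed; one must also check that $q+1$ or $r$ is even (so the space is inner) and that $q+1>2$ forces it to be non-Hermitian. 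Your arithmetic derivation of the prime corollary from (i) and (ii) is exactly right. Where you diverge from the paper is the necessity of (i): you propose to enumerate the possible sources listed in the converse of Main Theorem~2, whereas the paper argues directly on $S^{2n+1}$ via the number $g$ of principal curvatures, Abresch's multiplicity restrictions (for $n$ even and $g=4$, $m_1+m_2=n$ forces $\min\{m_1,m_2\}=1$ or $(2,2)$; similarly for $g=6$), Takagi's theorem for $\min=1$, a Coxeter-group argument for $g\in\{1,3\}$, and the Euler-characteristic obstruction of Proposition~\ref{prop:622} for $(4,2,2)$ and $(6,2,2)$. Both routes work (the open $(4,7,8)$ case lives on $S^{31}$ with $n=15$ odd, so neither is affected), but the paper's is more self-contained and avoids re-examining which FKM-foliations are homogeneous.

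One point in your plan must be corrected before the case analysis is carried out: you write that $N=1$ implies ``the unique projection is the homogeneous one, by Corollary~\ref{cor:inhomogeneous}.'' That corollary says the opposite for non-Hermitian spaces: if $G/K$ is non-Hermitian, \emph{all} $N$ projections are inhomogeneous, even when $N=1$ (e.g.\ $\G_2/\SO(4)$ has $N=1$ and its unique projection to $\C P^3$ is inhomogeneous). The implication ``$N=1\Rightarrow$ homogeneous'' is valid only for Hermitian $G/K$. In the place you use it ($n$ even, rank two, $\dim\g{p}\equiv 2\pmod 4$) the only spaces that occur are $\Gr_2(\R^{2p+1})$ and $\Sp(2)/\U(2)$, which happen to be Hermitian, so your conclusion survives -- but the criterion as stated would give wrong answers elsewhere and the Hermitian hypothesis must be checked explicitly.
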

\begin{proof}
We start with the necessity of (i). Clearly $\C P^1\cong S^2$ only admits homogeneous isoparametric foliations. Let $n$ be even,  $\mathcal{F}\subset S^{2n+1}$ an isoparametric foliation of codimension one, and $M$ an arbitrary hypersurface of $\mathcal{F}$ with $g\in\{1,2,3,4,6\}$ principal curvatures with multiplicities $m_1,\dots,m_g$. Recall that $m_i= m_{i+2}$ (indices modulo $g$). 

If  $g\in\{1,3\}$, a standard argument involving the Coxeter group of $\mathcal{F}$ (see for example \cite[p.~359]{Th10}) implies that a generic $M$ is not invariant under the antipodal map, so $M$ is not foliated by Hopf circles. If $g=2$, $\mathcal{F}$ is the orbit foliation of the $s$-representation of a product of two spheres, so its projection to $\C P^n$ is homogeneous, according to Theorem~\ref{th:homogeneity}.

If $g=4$ then $m_1+m_2=n$ is even. Hence a result of Abresch \cite{Ab83} implies that either $\min\{m_1,m_2\}=1$ or $m_1=m_2=2$. If $g=6$, then again by \cite{Ab83} we have $m_1=m_2\in\{1,2\}$; since $3(m_1+m_2)=2n$ in this case, $m_1=m_2=1$ is impossible.

If $g=4$ and $\min\{m_1,m_2\}=1$, according to Takagi's result \cite{Ta76}, $\mathcal{F}$ is the orbit foliation of the $s$-representation of the symmetric pair \textbf{B I}, with $\nu=1$. By virtue of Corollary~\ref{cor:inhomogeneous}, the projection of $\mathcal{F}$ to $\C P^n$ is homogeneous. We are left with the cases $(g,m_1,m_2)\in\{(4,2,2),(6,2,2)\}$, for which Proposition~\ref{prop:622} shows that $M$ is not foliated by Hopf circles. 

For the proof of (ii) and the sufficiency of (i), we will need the concrete values of the rank and dimension of the different symmetric spaces \cite[p.~518]{He78}. 

Assume that $\C P^n$ admits an irreducible inhomogeneous isoparametric foliation $\mathcal{G}$ of codimension $q\geq 2$. Then $\mathcal{G}$ is the projection of the foliation $\mathcal{F}_{G/K}$ of $S^{2n+1}$ defined by certain irreducible symmetric space $G/K$. According to Table~\ref{table:diagrams} and Corollary~\ref{cor:inhomogeneous}, the only possible cases for $G/K$ are: \textbf{A III}, \textbf{BD I}, \textbf{C II}, \textbf{D III}, \textbf{E II}, \textbf{E III}, \textbf{E V}, \textbf{E VI}, \textbf{E VIII}, \textbf{E IX}, \textbf{F I}, and \textbf{G}. One can easily check that, for all these cases, we have that $(\rank G/K)^2\leq \dim G/K$ and that $\rank G/K$ divides $\dim G/K$. Since $\rank G/K=\codim \mathcal{F}_{G/K}+1=q+1$ and $\dim G/K=2(n+1)$, we get that $(q+1)^2\leq 2(n+1)$ and $q+1$ divides $2(n+1)$.

Note that, conversely, if these two conditions hold, then $G/K=\SO(q+r+1)/\SO(q~+~1)\times\SO(r)$, with $r=2(n+1)/(q+1)$, defines a foliation $\cal{F}_{G/K}$ of codimension $q$ on $S^{2n+1}$. Since moreover $q+1$ or $r$ is even and $q+1>2$, then $N(\cal{F}_{G/K})\geq 1$ and $G/K$ is non-Hermitian. Hence, one can project $\cal{F}_{G/K}$ to an irreducible inhomogeneous isoparametric foliation with codimension $q$ on $\C P^n$. An analogous argument proves the sufficiency of (i), if one considers $G/K=\SU(2+r)/\mathrm{S}(\U(2)\times \U(r))$ with $r=(n+1)/2$.

The last claim of the theorem follows easily from (i) and (ii).
\end{proof}

\begin{remark}
The assumption of the irreducibility in Theorem~\ref{th:codimension} (except in part (i)) is essential. 
For example, \textbf{D III} with $p=6$ and \textbf{G} define a reducible inhomogeneous isoparametric foliation of codimension $4$ on $\C P^{18}$.
\end{remark}

%%%%%%%%%%%%%%%%%%%%%%%%%% Bibliography %%%%%%%%%%%%%%%%%%%%%%%%%%%
\bibliographystyle{amsplain}

\begin{table}[ht]
\renewcommand{\arraystretch}{1.2}
\begin{threeparttable}
\caption{Extended Vogan diagrams of irreducible compact inner symmetric spaces}
\label{table:diagrams}
\begin{tabular}{@{} l @{\;\;} >{\Small} V @{\;} >{\Small} l @{\;\;}  >{\Small} l @{\;\;} >{\Small} l @{\;\;} >{\Small} l @{}}
 & \normalsize Extended Dynkin diagram & {\normalsize $\mu$}  &  {\normalsize $G/K$} & {\normalsize $\lambda$} & {\normalsize $N(\cal{F}_{G/K})$} \\ \hline
$A_p$ & \vspace{\upperlinespace}
\begin{tikzpicture}[node distance=\nodedistance,g/.style={circle,inner sep=\circleinnersep,draw},a0/.style={rectangle,inner sep=\rectangleinnersep,draw}]
\node[g] (a1) [label=below:$\alpha_1$] {};
\node[g] (a2) [right=of a1,label=below:$\alpha_2$] {}
edge [] (a1);
\node[] (a3) [right=of a2] {};
\node[g] (ap) [right=of a3, label=below:$\alpha_\nu$] {};
\node[] (an2) [right=of ap] {};
\node[g] (an1) [right=of an2, label=below:$\alpha_{p-1}$] {};
\node[g] (an) [right=of an1, label=below:$\alpha_p$] {}
edge [] (an1);
\node[g] (a0) [above=of ap, label=below:$\alpha_0$] {}
edge [] (a1)
edge [] (an);
\draw ($(a3)!.5!(ap)$) -- (ap);
\draw ($(an2)!.5!(ap)$) -- (ap);
\draw ($(a3)!.5!(a2)$) -- (a2);
\draw ($(an2)!.5!(an1)$) -- (an1);
\draw [dashed] ($(a3)!.5!(a2)$) -- ($(a3)!.5!(ap)$);
\draw [dashed] ($(an2)!.5!(ap)$) -- ($(an1)!.5!(an2)$);
\end{tikzpicture}
 & $(1\ldots1)$ & \textbf{A III} & $(1\ldots1)$ & \begin{tabular}{@{}l@{}}$1\!+\!\left[\frac{\nu}{2}\right]\!+\!\left[\frac{p-\nu+1}{2}\right]$ \\ (if $2\nu\neq p+1$)\\ $1+\left[\frac{\nu}{2}\right]$ \\ (if $2\nu=p+1$)\end{tabular}
\\ \hline
$B_p$ &  \vspace{\upperlinespace}
\begin{tikzpicture}[node distance=\nodedistance,g/.style={circle,inner sep=\circleinnersep,draw},a0/.style={rectangle,inner sep=\rectangleinnersep,draw}]
\node[g] (a2) [label=below:$\alpha_2$] {};
\node[] (a3) [right=of a2] {};
\node[g] (ap) [right=of a3, label=below:$\alpha_\nu$] {};
\node[] (an3) [right=of ap] {};
\node[g] (an2) [right=of an3, label=below:$\alpha_{p-2}$] {};
\node[g] (an1) [right=of an2, label=below:$\alpha_{p-1}$] {}
edge [] (an2);
\node[g] (an) [right=of an1, label=below:$\alpha_p$] {};
\draw ($(an1)!.65!(an)$) -- ($(an1)!.65!(an)+(-0.3,0.2)$);
\draw ($(an1)!.65!(an)$) -- ($(an1)!.65!(an)+(-0.3,-0.2)$);
\draw [] (an1.north east) to (an.north west);
\draw [] (an1.south east) to (an.south west);
\node[g] (a0) at (135:1) [label=below:$\alpha_0$] {}
edge [] (a2);
\node[g] (a1) at (-135:1) [label=below:$\alpha_1$] {}
edge [] (a2);
\draw ($(a3)!.5!(ap)$) -- (ap);
\draw ($(an3)!.5!(ap)$) -- (ap);
\draw ($(a3)!.5!(a2)$) -- (a2);
\draw ($(an3)!.5!(an2)$) -- (an2);
\draw [dashed] ($(a3)!.5!(a2)$) -- ($(a3)!.5!(ap)$);
\draw [dashed] ($(an3)!.5!(ap)$) -- ($(an2)!.5!(an3)$);
\end{tikzpicture}
& $(122\ldots 2)$ & \textbf{B I} & $(11\ldots \overset{\nu}{1}2\ldots 2)$  & $1$
\\ \hline
\multirow{2}*[-1.5ex]{$C_p$} & \multirow{2}*[-2.5ex]{
\begin{tikzpicture}[node distance=\nodedistance,g/.style={circle,inner sep=\circleinnersep,draw},a0/.style={rectangle,inner sep=\rectangleinnersep,draw}]
\node[g] (a1) [label=below:$\alpha_1$] {};
\node[g] (a2) [right=of a1,label=below:$\alpha_2$] {}
edge [] (a1);
\node[] (a3) [right=of a2] {};
\node[g] (ap) [right=of a3, label=below:$\alpha_\nu$] {};
\node[] (an2) [right=of ap] {};
\node[g] (an1) [right=of an2, label=below:$\alpha_{p-1}$] {};
\node[g] (an) [right=of an1, label=below:$\alpha_p$] {};
\draw ($(an1)!.35!(an)$) -- ($(an1)!.35!(an)+(0.3,0.2)$);
\draw ($(an1)!.35!(an)$) -- ($(an1)!.35!(an)+(0.3,-0.2)$);
\draw [] (an1.north east) to (an.north west);
\draw [] (an1.south east) to (an.south west);
\node[g] (a0) [left=of a1, label=below:$\alpha_0$] {};
\draw ($(a0)!.65!(a1)$) -- ($(a0)!.65!(a1)+(-0.3,0.2)$);
\draw ($(a0)!.65!(a1)$) -- ($(a0)!.65!(a1)+(-0.3,-0.2)$);
\draw [] (a0.north east) to (a1.north west);
\draw [] (a0.south east) to (a1.south west);
\draw ($(a3)!.5!(ap)$) -- (ap);
\draw ($(an2)!.5!(ap)$) -- (ap);
\draw ($(a3)!.5!(a2)$) -- (a2);
\draw ($(an2)!.5!(an1)$) -- (an1);
\draw [dashed] ($(a3)!.5!(a2)$) -- ($(a3)!.5!(ap)$);
\draw [dashed] ($(an2)!.5!(ap)$) -- ($(an1)!.5!(an2)$);
\end{tikzpicture}
}
& 
\multirow{2}*[-1.5ex]{$(2\ldots 221)$} & \begin{tabular}{@{}l@{}}\textbf{C I}  \\[-0.6ex] \tiny $(\nu=p)$  \end{tabular}  & $(2\ldots 221)$ & 1
\\ \cline{4-6} & & & \begin{tabular}{@{}l@{}} \textbf{C II} \\[-0.6ex] \tiny$(\nu<p)$ \end{tabular} & $(1\ldots 1\overset{\nu}{2}\ldots 21)$ & \begin{tabular}{@{}l@{}}$2$ (if $2\nu\neq p$)\\ $1$ (if $2\nu= p$)\end{tabular}
\\ \hline
\multirow{2}*[-1.5ex]{$D_p$} & \multirow{2}*[0.6ex]{
\begin{tikzpicture}[node distance=\nodedistance, g/.style={circle,inner sep=\circleinnersep,draw},a0/.style={rectangle,inner sep=\rectangleinnersep,draw}]
\node[g] (a2) [label=below:$\alpha_2$] {}
edge [] (a1);
\node[] (a3) [right=of a2] {};
\node[g] (ap) [right=of a3, label=below:$\alpha_\nu$] {};
\node[] (an3) [right=of ap] {};
\node[g] (an2) [right=of an3, label=below left:$\alpha_{p-2}$] {};
\node[g] (an1) at ($(an2)+(-45:1)$) [label=below:$\alpha_{p-1}$] {}
edge [] (an2);
\node[g] (an) at ($(an2)+(45:1)$) [label=below:$\alpha_p$] {}
edge [] (an2);
\node[g] (a1) at (135:1) [label=below:$\alpha_0$] {}
edge [] (a2);
\node[g] (a1) at (-135:1) [label=below:$\alpha_1$] {}
edge [] (a2);
\draw ($(a3)!.5!(ap)$) -- (ap);
\draw ($(an3)!.5!(ap)$) -- (ap);
\draw ($(a3)!.5!(a2)$) -- (a2);
\draw ($(an3)!.5!(an2)$) -- (an2);
\draw [dashed] ($(a3)!.5!(a2)$) -- ($(a3)!.5!(ap)$);
\draw [dashed] ($(an3)!.5!(ap)$) -- ($(an3)!.5!(an2)$);
\end{tikzpicture}
}
& \multirow{2}*[-2ex]{$(12\ldots 211)$} & \begin{tabular}{@{}l@{}}\, \\[-2.6ex] \textbf{D I} \\[-0.6ex] \tiny $(\nu\leq p-2)$\\[-2.6ex] \,\end{tabular} & $(1\ldots \overset{\nu}{1}2\dots 211)$ & \begin{tabular}{@{}l@{}} $2$ (if $2\nu\neq p$) \\ $1$ (if $2\nu=p$)  \end{tabular}
\\ \cline{4-6} & & &
\begin{tabular}{@{}l@{}}\, \\[-2.6ex] \textbf{D III} \\[-0.6ex] \tiny $(\nu\geq p-1)$ \\[-2.6ex]\,\end{tabular} & $(12\ldots 211)$ & $2$
\\ \hline
\multirow{2}*[-2ex]{$E_6$} & \multirow{2}*[1.5ex]{
\begin{tikzpicture}[node distance=\nodedistance, g/.style={circle,inner sep=\circleinnersep,draw},a0/.style={rectangle,inner sep=\rectangleinnersep,draw}]
\node[g] (a1) [label=below:$\alpha_1$] {};
\node[g] (a3) [left=of a1, label=below:$\alpha_3$] {}
edge [] (a1);
\node[g] (a4) [left=of a3, label=below:$\alpha_4$] {}
edge [] (a3);
\node[g] (a2) [above=of a4, label=left:$\alpha_2$, label=right:\tiny\textbf{E II}] {}
edge [] (a4);
\node[g] (a0) [above=of a2, label=left:$\alpha_0$] {}
edge [] (a2);
\node[g] (a5) [left=of a4, label=below:$\alpha_5$] {}
edge [] (a4);
\node[g] (a6) [left=of a5, label=below:$\alpha_6$, label=above:\tiny\textbf{E III}] {}
edge [] (a5);
\end{tikzpicture}
}
& \multirow{2}*[-2ex]{$(122321)$} & \begin{tabular}{@{}l@{}}\, \\[-1ex] \textbf{E II}\\[-1ex] \,\end{tabular}  & $(112321)$ & $2$
\\  \cline{4-6} & & &
  \begin{tabular}{@{}l@{}}\, \\[-1ex]\textbf{E III} \\[-1ex] \,\end{tabular} & $(122321)$  & $2$
\\ \hline
\multirow{3}*{$E_7$} & \multirow{3}*[-0.8ex]{
\begin{tikzpicture}[node distance=\nodedistance, g/.style={circle,inner sep=\circleinnersep,draw},a0/.style={rectangle,inner sep=\rectangleinnersep,draw}]
\node[g] (a1) [label=below:$\alpha_1$, label=above:\tiny\textbf{E VI}] {};
\node[g] (a3) [left=of a1, label=below:$\alpha_3$] {}
edge [] (a1);
\node[g] (a4) [left=of a3, label=below:$\alpha_4$] {}
edge [] (a3);
\node[g] (a2) [above=of a4, label=left:$\alpha_2$, label=right:\tiny\textbf{E V}] {}
edge [] (a4);
\node[g] (a5) [left=of a4, label=below:$\alpha_5$] {}
edge [] (a4);
\node[g] (a6) [left=of a5, label=below:$\alpha_6$] {}
edge [] (a5);
\node[g] (a7) [left=of a6, label=below:$\alpha_7$, label=above:\tiny\textbf{E VII}] {}
edge [] (a6);
\node[g] (a0) [right=of a1, label=below:$\alpha_0$] {}
edge [] (a1);
\end{tikzpicture}
}
& \multirow{3}*[-0.5ex]{$(2234321)$} & \textbf{E V} & $(1123321)$  & $1$
\\ \cline{4-6}& & &
\textbf{E VI}  & $(1234321)$  & $2$
\\ \cline{4-6}& & &
\textbf{E VII} & $(2234321)$ & $1$
\\ \hline
\multirow{2}*[-1ex]{$E_8$} & \multirow{2}*[0ex]{
\begin{tikzpicture}[node distance=\nodedistance, g/.style={circle,inner sep=\circleinnersep,draw},a0/.style={rectangle,inner sep=\rectangleinnersep,draw}]
\node[g] (a1) [label=below:$\alpha_1$, label=above:\negthickspace\negthickspace\negthickspace\tiny\textbf{E \!VIII}] {};
\node[g] (a3) [left=of a1, label=below:$\alpha_3$] {}
edge [] (a1);
\node[g] (a4) [left=of a3, label=below:$\alpha_4$] {}
edge [] (a3);
\node[g] (a2) [above=of a4, label=left:$\alpha_2$] {}
edge [] (a4);
\node[g] (a5) [left=of a4, label=below:$\alpha_5$] {}
edge [] (a4);
\node[g] (a6) [left=of a5, label=below:$\alpha_6$] {}
edge [] (a5);
\node[g] (a7) [left=of a6, label=below:$\alpha_7$] {}
edge [] (a6);
\node[g] (a8) [left=of a7, label=below:$\alpha_8$, label=above:\tiny\textbf{E IX}] {}
edge [] (a7);
\node[g] (a0) [left=of a8, label=below:$\alpha_0$] {}
edge [] (a8);
\end{tikzpicture}
}
& \multirow{2}*[-1.2ex]{$(23465432)$} & \begin{tabular}{@{}l@{}}\,\\[-2.1ex]\textbf{E VIII}\\[-2.1ex]\, \end{tabular} & $(13354321)$ & $1$
\\ \cline{4-6} & & &
 \begin{tabular}{@{}l@{}}\,\\[-2.1ex]\textbf{E IX}\\[-2.1ex]\, \end{tabular} & $(23465431)$ & $1$
\\ \hline
\multirow{2}*[-0.1ex]{$F_4$} & \multirow{2}*[-0.5ex]{
\begin{tikzpicture}[node distance=\nodedistance, g/.style={circle,inner sep=\circleinnersep,draw},a0/.style={rectangle,inner sep=\rectangleinnersep,draw}]
\node[g] (a1) [label=below:$\alpha_1$, label=above:\tiny\textbf{F II}] {};
\node[g] (a2) [right=of a1, label=below:$\alpha_2$] {}
edge [] (a1);
\node[g] (a3) [right=of a2, label=below:$\alpha_3$] {};
\node[g] (a4) [right=of a3, label=below:$\alpha_4$, label=above:\tiny\textbf{F I}] {}
edge [] (a3);
\node[g] (a0) [right=of a4, label=below:$\alpha_0$] {}
edge [] (a4);
\draw ($(a2)!.35!(a3)$) -- ($(a2)!.35!(a3)+(0.3,0.2)$);
\draw ($(a2)!.35!(a3)$) -- ($(a2)!.35!(a3)+(0.3,-0.2)$);
\draw [] (a2.north east) to (a3.north west);
\draw [] (a2.south east) to (a3.south west);
\end{tikzpicture}
}
& \multirow{2}*{$(2432)$} & \Small\textbf{F I}  & $(2431)$ & $1$
\\ \cline{4-6} & & &
\Small\textbf{F II}  & $(1321)$ & $-$ (rank one)
\\ \hline 
$G_2$ & 
\begin{tikzpicture}[node distance=\nodedistance, g/.style={circle,inner sep=\circleinnersep,draw},a0/.style={rectangle,inner sep=\rectangleinnersep,draw}]
\node[g] (a1) [label=below:$\alpha_1$] {};
\node[g] (a2) [right=of a1, label=below:$\alpha_2$, label=above:\tiny\textbf{G}] {}
edge []  (a1);
\node[g] (a0) [right=of a2, label=below:$\alpha_0$] {}
edge [] (a2);
\draw ($(a1)!.35!(a2)$) -- ($(a1)!.35!(a2)+(0.3,0.2)$);
\draw ($(a1)!.35!(a2)$) -- ($(a1)!.35!(a2)+(0.3,-0.2)$);
\draw [] (a1.north east) to (a2.north west);
\draw [] (a1.south east) to (a2.south west);
\end{tikzpicture}
& $(32)$ & \textbf{G} & $(31)$ & $1$
\\
\hline
\end{tabular}
\begin{tablenotes}
\vspace{0.8ex}
\item[*]\Small For each extended Dynkin diagram, we provide the maximal root $\mu$ and the associated symmetric spaces $G/K$ using Cartan's notation. For every such $G/K$, we show the corresponding maximal noncompact root $\lambda$ and the number $N(\cal{F}_{G/K})$ of noncongruent isoparametric foliations on the complex projective space induced by $\cal{F}_{G/K}$. Roots are specified in coordinates with respect to $\Pi_\g{g}$.
\end{tablenotes}
\end{threeparttable}
\end{table}

\begin{table}[ht]
\begin{threeparttable}
\caption{Lowest weight diagrams of FKM-foliations with $m_1\leq m_2$}
\label{table:FKM_diagrams}
\renewcommand{\arraystretch}{1.2}
\begin{tabular}{@{} l @{\quad} >{\Small} W @{\;\;}   >{\Small} l @{\qquad}  l @{}}
 $m$ & \normalsize Lowest weight diagram   &   {\normalsize $\g{h}$} & $N(\cal{F}_\cal{P})$ \\ \hline
 & 
\begin{tikzpicture}[node distance=\nodedistance,g/.style={circle,inner sep=\circleinnersep,draw},lw/.style={circle,inner sep=\circleinnersep,fill=black, draw}]
\node[g] (a1s)[label=below:$\alpha^s_1$] {};
\node[g] (a2s) [right=of a1s, label=below:$\alpha^s_2$] {}
edge [] (a1s);
\node[] (a3s) [right=of a2s] {};
\node[g] (ap2) [right=of a3s, label=below:$\alpha^s_{p-2}$] {};
\node[g] (ap1) [right=of ap2, label=below:$\alpha^s_{p-1}$] {}
edge [] (ap2);
\node[g] (ap) [right=of ap1, label=below:$\alpha^s_p$] {};
\draw ($(ap1)!.65!(ap)$) -- ($(ap1)!.65!(ap)+(-0.3,0.2)$);
\draw ($(ap1)!.65!(ap)$) -- ($(ap1)!.65!(ap)+(-0.3,-0.2)$);
\draw [] (ap1.north east) to (ap.north west);
\draw [] (ap1.south east) to (ap.south west);
\draw ($(a2s)!.5!(a3s)$) -- (a2s);
\draw ($(ap2)!.5!(a3s)$) -- (ap2);
\draw [dashed] ($(a2s)!.5!(a3s)$) -- ($(ap2)!.5!(a3s)$);
%upper right hand side branch:
\node[lw] (lw+) at ($(ap)+(40:0.9)$) [label=below:$\lambda^+$] {}
edge [] (ap);
\node[g] (a1+) [right=of lw+, label=below:$\alpha^+_1$] {}
edge [] (lw+);
\node[] (a2+) [right=of a1+] {};
\node[g] (aq2+) [right=of a2+, label=below:$\alpha^+_{q_+-2}$] {};
\node[g] (aq1+) at ($(aq2+)+(-25:0.8)$) [label=right:$\alpha^+_{q_+-1}$] {}
edge [] (aq2+);
\node[g] (aq+) at ($(aq2+)+(25:0.8)$) [label=right:$\alpha^+_{q_+}$] {}
edge [] (aq2+);
\draw ($(a1+)!.5!(a2+)$) -- (a1+);
\draw ($(aq2+)!.5!(a2+)$) -- (aq2+);
\draw [dashed] ($(a1+)!.5!(a2+)$) -- ($(aq2+)!.5!(a2+)$);
%lower right hand side branch:
\node[lw] (lw-) at ($(ap)+(-40:0.9)$) [label=below:$\lambda^-$] {}
edge [] (ap);
\node[g] (a1-) [right=of lw-, label=below:$\alpha^-_1$] {}
edge [] (lw-);
\node[] (a2-) [right=of a1-] {};
\node[g] (aq2-) [right=of a2-, label=below:$\alpha^-_{q_--2}$] {};
\node[g] (aq1-) at ($(aq2-)+(-25:0.8)$) [label=right:$\alpha^-_{q_- -1}$] {}
edge [] (aq2-);
\node[g] (aq-) at ($(aq2-)+(25:0.8)$) [label=right:$\alpha^-_{q_-}$] {}
edge [] (aq2-);
\draw ($(a1-)!.5!(a2-)$) -- (a1-);
\draw ($(aq2-)!.5!(a2-)$) -- (aq2-);
\draw [dashed] ($(a1-)!.5!(a2-)$) -- ($(aq2-)!.5!(a2-)$);
\end{tikzpicture}
 & $\begin{array}{@{}l@{}}\g{so}(k_+)\oplus\g{so}(k_-)\\ k_\pm=2q_\pm\end{array}$ & $2$
\\  \cline{2-4} $0$ & %\vspace{\upperlinespace}
\begin{tikzpicture}[node distance=\nodedistance,g/.style={circle,inner sep=\circleinnersep,draw},lw/.style={circle,inner sep=\circleinnersep,fill=black, draw}]
\node[g] (a1s)[label=below:$\alpha^s_1$] {};
\node[g] (a2s) [right=of a1s, label=below:$\alpha^s_2$] {}
edge [] (a1s);
\node[] (a3s) [right=of a2s] {};
\node[g] (ap2) [right=of a3s, label=below:$\alpha^s_{p-2}$] {};
\node[g] (ap1) [right=of ap2, label=below:$\alpha^s_{p-1}$] {}
edge [] (ap2);
\node[g] (ap) [right=of ap1, label=below:$\alpha^s_p$] {};
\draw ($(ap1)!.65!(ap)$) -- ($(ap1)!.65!(ap)+(-0.3,0.2)$);
\draw ($(ap1)!.65!(ap)$) -- ($(ap1)!.65!(ap)+(-0.3,-0.2)$);
\draw [] (ap1.north east) to (ap.north west);
\draw [] (ap1.south east) to (ap.south west);
\draw ($(a2s)!.5!(a3s)$) -- (a2s);
\draw ($(ap2)!.5!(a3s)$) -- (ap2);
\draw [dashed] ($(a2s)!.5!(a3s)$) -- ($(ap2)!.5!(a3s)$);
%upper right hand side branch:
\node[lw] (lw+) at ($(ap)+(40:0.8)$) [label=above:$\lambda^+$] {}
edge [] (ap);
\node[g] (a1+) [right=of lw+, label=above:$\alpha^+_1$] {}
edge [] (lw+);
\node[] (a2+) [right=of a1+] {};
\node[g] (aq2+) [right=of a2+, label=above:$\alpha^+_{q_+-2}$] {};
\node[g] (aq1+) at ($(aq2+)+(-25:0.8)$) [label=right:$\alpha^+_{q_+-1}$] {}
edge [] (aq2+);
\node[g] (aq+) at ($(aq2+)+(25:0.8)$) [label=right:$\alpha^+_{q_+}$] {}
edge [] (aq2+);
\draw ($(a1+)!.5!(a2+)$) -- (a1+);
\draw ($(aq2+)!.5!(a2+)$) -- (aq2+);
\draw [dashed] ($(a1+)!.5!(a2+)$) -- ($(aq2+)!.5!(a2+)$);
%lower right hand side branch:
\node[lw] (lw-) at ($(ap)+(-40:0.8)$) [label=above:$\lambda^-$] {}
edge [] (ap);
\node[g] (a1-) [right=of lw-, label=above:$\alpha^-_1$] {}
edge [] (lw-);
\node[] (a2-) [right=of a1-] {};
\node[g] (aq1-) [right=of a2-, label=above:$\alpha^-_{q_--1}$] {};
\node[g] (aq-) [right=of aq1-, label=right:$\alpha^-_{q_-}$] {};
\draw ($(aq1-)!.65!(aq-)$) -- ($(aq1-)!.65!(aq-)+(-0.3,0.2)$);
\draw ($(aq1-)!.65!(aq-)$) -- ($(aq1-)!.65!(aq-)+(-0.3,-0.2)$);
\draw [] (aq1-.north east) to (aq-.north west);
\draw [] (aq1-.south east) to (aq-.south west);
\draw ($(a1-)!.5!(a2-)$) -- (a1-);
\draw ($(aq1-)!.5!(a2-)$) -- (aq1-);
\draw [dashed] ($(a1-)!.5!(a2-)$) -- ($(aq1-)!.5!(a2-)$);
\end{tikzpicture}
& $\begin{array}{@{}l@{}}\g{so}(k_+)\oplus\g{so}(k_-)\\ k_+=2q_+,\\ k_-=2q_-+1\end{array}$ & $1$
\\ \cline{2-4} &  %\vspace{\upperlinespace}
\begin{tikzpicture}[node distance=\nodedistance,g/.style={circle,inner sep=\circleinnersep,draw},lw/.style={circle,inner sep=\circleinnersep,fill=black, draw}]
\node[g] (a1s)[label=below:$\alpha^s_1$] {};
\node[g] (a2s) [right=of a1s, label=below:$\alpha^s_2$] {}
edge [] (a1s);
\node[] (a3s) [right=of a2s] {};
\node[g] (ap2) [right=of a3s, label=below:$\alpha^s_{p-2}$] {};
\node[g] (ap1) [right=of ap2, label=below:$\alpha^s_{p-1}$] {}
edge [] (ap2);
\node[g] (ap) [right=of ap1, label=below:$\alpha^s_p$] {};
\draw ($(ap1)!.65!(ap)$) -- ($(ap1)!.65!(ap)+(-0.3,0.2)$);
\draw ($(ap1)!.65!(ap)$) -- ($(ap1)!.65!(ap)+(-0.3,-0.2)$);
\draw [] (ap1.north east) to (ap.north west);
\draw [] (ap1.south east) to (ap.south west);
\draw ($(a2s)!.5!(a3s)$) -- (a2s);
\draw ($(ap2)!.5!(a3s)$) -- (ap2);
\draw [dashed] ($(a2s)!.5!(a3s)$) -- ($(ap2)!.5!(a3s)$);
%upper right hand side branch:
\node[lw] (lw+) at ($(ap)+(35:0.8)$) [label=above:$\lambda^+$] {}
edge [] (ap);
\node[g] (a1+) [right=of lw+, label=above:$\alpha^+_1$] {}
edge [] (lw+);
\node[] (a2+) [right=of a1+] {};
\node[g] (aq1+) [right=of a2+, label=above:$\alpha^+_{q_+-1}$] {};
\node[g] (aq+) [right=of aq1+, label=right:$\alpha^+_{q_+}$] {};
\draw ($(aq1+)!.65!(aq+)$) -- ($(aq1+)!.65!(aq+)+(-0.3,0.2)$);
\draw ($(aq1+)!.65!(aq+)$) -- ($(aq1+)!.65!(aq+)+(-0.3,-0.2)$);
\draw [] (aq1+.north east) to (aq+.north west);
\draw [] (aq1+.south east) to (aq+.south west);
\draw ($(a1+)!.5!(a2+)$) -- (a1+);
\draw ($(aq1+)!.5!(a2+)$) -- (aq1+);
\draw [dashed] ($(a1+)!.5!(a2+)$) -- ($(aq1+)!.5!(a2+)$);
%lower right hand side branch:
\node[lw] (lw-) at ($(ap)+(-35:0.8)$) [label=above:$\lambda^-$] {}
edge [] (ap);
\node[g] (a1-) [right=of lw-, label=above:$\alpha^-_1$] {}
edge [] (lw-);
\node[] (a2-) [right=of a1-] {};
\node[g] (aq1-) [right=of a2-, label=above:$\alpha^-_{q_--1}$] {};
\node[g] (aq-) [right=of aq1-, label=right:$\alpha^-_{q_-}$] {};
\draw ($(aq1-)!.65!(aq-)$) -- ($(aq1-)!.65!(aq-)+(-0.3,0.2)$);
\draw ($(aq1-)!.65!(aq-)$) -- ($(aq1-)!.65!(aq-)+(-0.3,-0.2)$);
\draw [] (aq1-.north east) to (aq-.north west);
\draw [] (aq1-.south east) to (aq-.south west);
\draw ($(a1-)!.5!(a2-)$) -- (a1-);
\draw ($(aq1-)!.5!(a2-)$) -- (aq1-);
\draw [dashed] ($(a1-)!.5!(a2-)$) -- ($(aq1-)!.5!(a2-)$);
\end{tikzpicture}
 & $\begin{array}{@{}l@{}}\g{so}(k_+)\oplus\g{so}(k_-)\\k_\pm=2q_\pm+1\end{array}$  & $1$
\\ \hline
\multirow{2}*[-4ex]{$1,7$} &  \vspace{\upperlinespace}
\begin{tikzpicture}[node distance=\nodedistance,g/.style={circle,inner sep=\circleinnersep,draw},lw/.style={circle,inner sep=\circleinnersep,fill=black, draw}]
\node[g] (a1s)[label=below:$\alpha^s_1$] {};
\node[g] (a2s) [right=of a1s, label=below:$\alpha^s_2$] {}
edge [] (a1s);
\node[] (a3s) [right=of a2s] {};
\node[g] (ap3) [right=of a3s, label=below:$\alpha^s_{p-3}$] {};
\node[g] (ap2) [right=of ap3, label=below:$\alpha^s_{p-2}$] {}
edge [] (ap3);
\node[g] (ap1) at ($(ap2)+(-35:0.8)$) [label=below:$\alpha^s_{p-1}$] {}
edge [] (ap2);
\node[g] (ap) at ($(ap2)+(35:0.8)$) [label=below:$\alpha^s_{p}$] {}
edge [] (ap2);
\draw ($(a2s)!.5!(a3s)$) -- (a2s);
\draw ($(ap3)!.5!(a3s)$) -- (ap3);
\draw [dashed] ($(a2s)!.5!(a3s)$) -- ($(ap2)!.5!(a3s)$);
%middle part:
\node[lw] (lw+) [right=of ap, label=below:$\lambda^+$] {}
edge [] (ap);
\node[lw] (lw-) [right=of ap1, label=below:$\lambda^-$] {}
edge [] (ap1);
\node[g] (a1) at ($(lw+)+(-35:0.8)$) [label=below:$\alpha_1$] {}
edge [] (lw+)
edge [] (lw-);
%right hand side branch:
\node[g] (a2) [right=of a1, label=below:$\alpha_2$] {}
edge [] (a1);
\node[] (a3) [right=of a2] {};
\node[g] (aq2) [right=of a3, label=below:$\alpha_{q-2}$] {};
\node[g] (aq1) at ($(aq2)+(-25:0.8)$) [label=below:$\alpha_{q-1}$] {}
edge [] (aq2);
\node[g] (aq) at ($(aq2)+(25:0.8)$) [label=below:$\alpha_{q}$] {}
edge [] (aq2);
\draw ($(a2)!.5!(a3)$) -- (a2);
\draw ($(aq2)!.5!(a3)$) -- (aq2);
\draw [dashed] ($(a2)!.5!(a3)$) -- ($(aq2)!.5!(a3)$);
\end{tikzpicture}
 & $\begin{array}{@{}l@{}}\g{so}(k)\\ k=2q\end{array}$ & $2$
\\ \cline{2-4} &  \vspace{\upperlinespace}
\begin{tikzpicture}[node distance=\nodedistance,g/.style={circle,inner sep=\circleinnersep,draw},lw/.style={circle,inner sep=\circleinnersep,fill=black, draw}]
\node[g] (a1s)[label=below:$\alpha^s_1$] {};
\node[g] (a2s) [right=of a1s, label=below:$\alpha^s_2$] {}
edge [] (a1s);
\node[] (a3s) [right=of a2s] {};
\node[g] (ap3) [right=of a3s, label=below:$\alpha^s_{p-3}$] {};
\node[g] (ap2) [right=of ap3, label=below:$\alpha^s_{p-2}$] {}
edge [] (ap3);
\node[g] (ap1) at ($(ap2)+(-35:0.8)$) [label=below:$\alpha^s_{p-1}$] {}
edge [] (ap2);
\node[g] (ap) at ($(ap2)+(35:0.8)$) [label=below:$\alpha^s_{p}$] {}
edge [] (ap2);
\draw ($(a2s)!.5!(a3s)$) -- (a2s);
\draw ($(ap3)!.5!(a3s)$) -- (ap3);
\draw [dashed] ($(a2s)!.5!(a3s)$) -- ($(ap2)!.5!(a3s)$);
%middle part:
\node[lw] (lw+) [right=of ap, label=below:$\lambda^+$] {}
edge [] (ap);
\node[lw] (lw-) [right=of ap1, label=below:$\lambda^-$] {}
edge [] (ap1);
\node[g] (a1) at ($(lw+)+(-35:0.8)$) [label=below:$\alpha_1$] {}
edge [] (lw+)
edge [] (lw-);
%right hand side branch:
\node[g] (a2) [right=of a1, label=below:$\alpha_2$] {}
edge [] (a1);
\node[] (a3) [right=of a2] {};
\node[g] (aq1) [right=of a3, label=below:$\alpha_{q-1}$] {};
\node[g] (aq) [right=of aq1, label=below:$\alpha_{q}$] {};
\draw ($(aq1)!.65!(aq)$) -- ($(aq1)!.65!(aq)+(-0.3,0.2)$);
\draw ($(aq1)!.65!(aq)$) -- ($(aq1)!.65!(aq)+(-0.3,-0.2)$);
\draw [] (aq1.north east) to (aq.north west);
\draw [] (aq1.south east) to (aq.south west);
\draw ($(a2)!.5!(a3)$) -- (a2);
\draw ($(aq1)!.5!(a3)$) -- (aq2);
\draw [dashed] ($(a2)!.5!(a3)$) -- ($(aq1)!.5!(a3)$);
\end{tikzpicture}
 &  $\begin{array}{@{}l@{}}\g{so}(k)\\ k=2q+1\end{array}$ & $1$
\\ \hline
$2,6$ &  \vspace{\upperlinespace}
\begin{tikzpicture}[node distance=\nodedistance,g/.style={circle,inner sep=\circleinnersep,draw},lw/.style={circle,inner sep=\circleinnersep,fill=black, draw}]
\node[g] (a1s)[label=below:$\alpha^s_1$] {};
\node[g] (a2s) [right=of a1s, label=below:$\alpha^s_2$] {}
edge [] (a1s);
\node[] (a3s) [right=of a2s] {};
\node[g] (ap2) [right=of a3s, label=below:$\alpha^s_{p-2}$] {};
\node[g] (ap1) [right=of ap2, label=below:$\alpha^s_{p-1}$] {}
edge [] (ap2);
\node[g] (ap) [right=of ap1, label=below:$\alpha^s_p$] {};
\draw ($(ap1)!.65!(ap)$) -- ($(ap1)!.65!(ap)+(-0.3,0.2)$);
\draw ($(ap1)!.65!(ap)$) -- ($(ap1)!.65!(ap)+(-0.3,-0.2)$);
\draw [] (ap1.north east) to (ap.north west);
\draw [] (ap1.south east) to (ap.south west);
\draw ($(a2s)!.5!(a3s)$) -- (a2s);
\draw ($(ap2)!.5!(a3s)$) -- (ap2);
\draw [dashed] ($(a2s)!.5!(a3s)$) -- ($(ap2)!.5!(a3s)$);
%right hand side branch:
\node[lw] (lw+) at ($(ap)+(40:0.9)$) [label=below:$\lambda^+$] {}
edge [] (ap);
\node[g] (a1) [right=of lw+, label=below:$\alpha_1$] {}
edge [] (lw+);
\node[g] (a2) [right=of a1, label=below:$\alpha_2$] {}
edge [] (a1);
\node[] (a3) [right=of a2] {}
edge [] (a2);
\node[lw] (lw-) at ($(ap)+(-40:0.9)$) [label=above:$\lambda^-$] {}
edge [] (ap);
\node[g] (aq1) [right=of lw-, label=above:$\alpha_{q-1}$] {}
edge [] (lw-);
\node[g] (aq2) [right=of aq1, label=above:$\alpha_{q-2}$] {}
edge [] (aq1);
\node[] (aq3) [right=of aq2] {}
edge [] (aq2);
\draw [dashed] ($(aq3)+(0:-0.1)$) -- ($(a3)+(0:-0.1)$);
\end{tikzpicture}
 &  $\begin{array}{@{}l@{}}\g{u}(k)\\ k=q\end{array}$ & $2+\left[\frac{k}{2}\right]$
\\ \hline
$3,5$ &  \vspace{\upperlinespace}
\begin{tikzpicture}[node distance=\nodedistance,g/.style={circle,inner sep=\circleinnersep,draw},lw/.style={circle,inner sep=\circleinnersep,fill=black, draw}]
\node[g] (a1s)[label=below:$\alpha^s_1$] {};
\node[g] (a2s) [right=of a1s, label=below:$\alpha^s_2$] {}
edge [] (a1s);
\node[] (a3s) [right=of a2s] {};
\node[g] (ap3) [right=of a3s, label=below:$\alpha^s_{p-3}$] {};
\node[g] (ap2) [right=of ap3, label=below:$\alpha^s_{p-2}$] {}
edge [] (ap3);
\node[g] (ap1) at ($(ap2)+(-35:0.8)$) [label=below:$\alpha^s_{p-1}$] {}
edge [] (ap2);
\node[g] (ap) at ($(ap2)+(35:0.8)$) [label=below:$\alpha^s_{p}$] {}
edge [] (ap2);
\draw ($(a2s)!.5!(a3s)$) -- (a2s);
\draw ($(ap3)!.5!(a3s)$) -- (ap3);
\draw [dashed] ($(a2s)!.5!(a3s)$) -- ($(ap2)!.5!(a3s)$);
%middle part:
\node[lw] (lw+) [right=of ap, label=below:$\lambda^+$] {}
edge [] (ap);
\node[lw] (lw-) [right=of ap1, label=below:$\lambda^-$] {}
edge [] (ap1);
\node[g] (a1) at ($(lw+)+(-35:0.8)$) [label=below:$\alpha_1$] {}
edge [] (lw+)
edge [] (lw-);
%right hand side branch:
\node[g] (a2) [right=of a1, label=below:$\alpha_2$] {}
edge [] (a1);
\node[] (a3) [right=of a2] {};
\node[g] (aq1) [right=of a3, label=below:$\alpha_{q-1}$] {};
\node[g] (aq) [right=of aq1, label=below:$\alpha_{q}$] {};
\draw ($(aq1)!.35!(aq)$) -- ($(aq1)!.35!(aq)-(-0.3,0.2)$);
\draw ($(aq1)!.35!(aq)$) -- ($(aq1)!.35!(aq)-(-0.3,-0.2)$);
\draw [] (aq1.north east) to (aq.north west);
\draw [] (aq1.south east) to (aq.south west);
\draw ($(a2)!.5!(a3)$) -- (a2);
\draw ($(aq1)!.5!(a3)$) -- (aq1);
\draw [dashed] ($(a2)!.5!(a3)$) -- ($(aq1)!.5!(a3)$);
\end{tikzpicture} &  $\begin{array}{@{}l@{}}\g{sp}(k)\\ k=q\end{array}$ & $2$
\\ \hline
$4$ &  %\vspace{\upperlinespace}
\begin{tikzpicture}[node distance=\nodedistance,g/.style={circle,inner sep=\circleinnersep,draw},lw/.style={circle,inner sep=\circleinnersep,fill=black, draw}]
\node[g] (a1s)[label=below:$\alpha^s_1$] {};
\node[g] (a2s) [right=of a1s, label=below:$\alpha^s_2$] {}
edge [] (a1s);
\node[] (a3s) [right=of a2s] {};
\node[g] (ap2) [right=of a3s, label=below:$\alpha^s_{p-2}$] {};
\node[g] (ap1) [right=of ap2, label=below:$\alpha^s_{p-1}$] {}
edge [] (ap2);
\node[g] (ap) [right=of ap1, label=below:$\alpha^s_p$] {};
\draw ($(ap1)!.65!(ap)$) -- ($(ap1)!.65!(ap)+(-0.3,0.2)$);
\draw ($(ap1)!.65!(ap)$) -- ($(ap1)!.65!(ap)+(-0.3,-0.2)$);
\draw [] (ap1.north east) to (ap.north west);
\draw [] (ap1.south east) to (ap.south west);
\draw ($(a2s)!.5!(a3s)$) -- (a2s);
\draw ($(ap2)!.5!(a3s)$) -- (ap2);
\draw [dashed] ($(a2s)!.5!(a3s)$) -- ($(ap2)!.5!(a3s)$);
%upper right hand side branch:
\node[lw] (lw+) at ($(ap)+(35:0.8)$) [label=above:$\lambda^+$] {}
edge [] (ap);
\node[g] (a1+) [right=of lw+, label=above:$\alpha^+_1$] {}
edge [] (lw+);
\node[] (a2+) [right=of a1+] {};
\node[g] (aq1+) [right=of a2+, label=above:$\alpha^+_{q_+-1}$] {};
\node[g] (aq+) [right=of aq1+, label=right:$\alpha^+_{q_+}$] {};
\draw ($(aq1+)!.35!(aq+)$) -- ($(aq1+)!.35!(aq+)-(-0.3,0.2)$);
\draw ($(aq1+)!.35!(aq+)$) -- ($(aq1+)!.35!(aq+)-(-0.3,-0.2)$);
\draw [] (aq1+.north east) to (aq+.north west);
\draw [] (aq1+.south east) to (aq+.south west);
\draw ($(a1+)!.5!(a2+)$) -- (a1+);
\draw ($(aq1+)!.5!(a2+)$) -- (aq1+);
\draw [dashed] ($(a1+)!.5!(a2+)$) -- ($(aq1+)!.5!(a2+)$);
%lower right hand side branch:
\node[lw] (lw-) at ($(ap)+(-35:0.8)$) [label=above:$\lambda^-$] {}
edge [] (ap);
\node[g] (a1-) [right=of lw-, label=above:$\alpha^-_1$] {}
edge [] (lw-);
\node[] (a2-) [right=of a1-] {};
\node[g] (aq1-) [right=of a2-, label=above:$\alpha^-_{q_--1}$] {};
\node[g] (aq-) [right=of aq1-, label=right:$\alpha^-_{q_-}$] {};
\draw ($(aq1-)!.35!(aq-)$) -- ($(aq1-)!.35!(aq-)-(-0.3,0.2)$);
\draw ($(aq1-)!.35!(aq-)$) -- ($(aq1-)!.35!(aq-)-(-0.3,-0.2)$);
\draw [] (aq1-.north east) to (aq-.north west);
\draw [] (aq1-.south east) to (aq-.south west);
\draw ($(a1-)!.5!(a2-)$) -- (a1-);
\draw ($(aq1-)!.5!(a2-)$) -- (aq1-);
\draw [dashed] ($(a1-)!.5!(a2-)$) -- ($(aq1-)!.5!(a2-)$);
\end{tikzpicture} &  $\begin{array}{@{}l@{}}\g{sp}(k_+)\oplus\g{sp}(k_-)\\ k_\pm=q_\pm\end{array}$ & $2$
\\
\hline
\end{tabular}
\begin{tablenotes}
\vspace{0.8ex}
\item[*]\Small The following data are provided for each value of $m$ ($\mod 8$): the corresponding lowest weight diagrams (see Remark~\ref{rem:exceptional_diagrams} for exceptional cases with low $k$, $k_\pm$), the Lie algebra $\g{h}$ such that $\g{so}(m+1)\oplus\g{h}$ is the Lie algebra of $\Aut(\cal{F}_\cal{P})$, and the value of $N(\cal{F}_\cal{P})$.  
\end{tablenotes}
\end{threeparttable}
\end{table}

\end{document}